\newtheorem{theorem}{Theorem}[section]
\newtheorem{proposition}[theorem]{Proposition}
\newtheorem{conjecture}[theorem]{Conjecture}
\newtheorem{corollary}[theorem]{Corollary}
\newtheorem{lemma}[theorem]{Lemma}
\newtheorem{question}[theorem]{Question}
\theoremstyle{definition}
\newtheorem{definition}[theorem]{Definition}
\newtheorem{remark}[theorem]{Remark}
\renewcommand{\a}{{\alpha}}
\renewcommand{\b}{{\beta}}
\newcommand{\g}{{\gamma}}
\renewcommand{\d}{{\delta}}
\newcommand{\e}{{\varepsilon}}
\newcommand{\n}{{\nu}}
\renewcommand{\L}{{\Lambda}}
\renewcommand{\l}{{\lambda}}
\renewcommand{\r}{{\rho}}
\newcommand{\s}{{\sigma}}
\newcommand{\E}{{\mathbb E}}
\renewcommand{\P}{{\mathbb P}}
\newcommand{\Z}{{\mathbb Z}}
\renewcommand{\O}{{\mathcal{O}}}
\newcommand{\Aut}{{\mathrm{Aut}}}
\renewcommand{\H}{{\mathrm H}}
\newcommand{\unif}{{\mathrm{unif}}}
\newcommand{\diag}{{\mathrm{diag}}}
\newcommand{\diam}{{\mathrm{diam}}}
\newcommand{\supp}{{\mathrm{supp}}}
\newcommand{\Loc}{{\mathrm{Loc}}}
\newcommand{\Sym}{{\mathcal{S}}}
\newcommand{\id}{{\mathrm{id}}}
\newcommand{\Coup}{{\mathrm{Coup}}}
\newcommand{\word}{{\mathrm{word}}}
\renewcommand{\u}{{\mathbf{u}}}
\title{Noise sensitivity of random walks on groups}
\author[1]{Ita\"i Benjamini}
\author[2]{J\'er\'emie Brieussel}
\affil[1]{{\small Department of Mathematics, Weizmann Institute of Science, Rehovot, Israel}}
\affil[2]{{\small Institut Montpellierain Alexander Grothendieck, Universit\'e de Montpellier, France}}
\begin{document}
\maketitle

\begin{abstract}
A random walk on a group is noise sensitive if resampling every step independently with a small probability results in an almost independent output. We precisely define two notions: $\ell^1$-noise sensitivity and entropy noise sensitivity. Groups with one of these properties are necessarily Liouville. Homomorphisms to free abelian groups provide an obstruction to $\ell^1$-noise sensitivity. We also provide examples of $\ell^1$ and entropy noise sensitive random walks.

Noise sensitivity raises many open questions which are described at the end of the paper.
\end{abstract}

\section{Introduction}

Physically, noise is a non-significant perturbation of an observation. In signal theory, noise is an unintentional perturbation of a message. Noise is an inherent phenomenon to physical observations and to communication. Its influence on a channel capacity was already taken into account by Shannon in his mathematical theory of communication~\cite{Shannon}.

In probability theory, the noise of an event $E(x_1,\dots,x_n)$ (i.e. a Boolean function) depending on a large number of variables can be modeled as the effect of replacing a (small) proportion $\r\in (0,1)$ of the variables by random entries. An event is \emph{noise sensitive} if the realisation of $E$ gives no (or very little) information on what happens when the entries are perturbed by noise. For instance in bond percolation at critical probability $\frac{1}{2}$, having a left-right crossing on an $n\times n$ square lattice is a noise sensitive event \cite{BKS,GPS10}. On the contrary, (weighted) majority functions are noise stable \cite{BKS}. The effect of noise in percolation and voting games has been widely studied over the last decades. We refer to \cite{BKS,GarbanSteiff,GPS10,Kalai2} and references therein for more on these topics.

In the present paper we investigate sensitivity to noise of random walks on groups. A random walk on a group $G$ is a sequence of products $X_n=s_1\dots s_n$ of independent variables $s_i$ following identical distribution $\mu$. Given such a product $X_n$, we can noise it by resampling independently each increment $s_i$ with a probability $\r \in (0,1)$, this provides a new variable $Y_n^\r$ depending on $X_n$. In the Cayley graph, the original word~$X_n$ can be interpreted as a sequence of instructions for a moving particule and~$Y_n^\r$ as the effective trajectory if instructions are misread with probability~$\r$. 

Broadly speaking, the random walk $(G,\mu)$ is \emph{noise sensitive} if $Y_n^\r$ generally seems independent of $X_n$. This vague statement can be specified in a number of ways and we refer to Section~\ref{sec:def} for several precise definitions. 
We retain two principal notions of noise sensitivity in this context. The random walk $(G,\mu)$ is $\ell^1$-noise sensitive if the law of the pair $(X_n,Y_n^\r)$ and the law of a pair $(X_n,X_n')$ of two \emph{independent} samples are close in the sense that their $\ell^1$-distance tends to zero.  The random walk is entropy noise sensitive if the ratio between the conditional entropy $\H(Y_n^\r|X_n)$ and $\H(X_n)$ tends to one, which means that the average amount of information needed to describe $Y_n^\r$ once we already know $X_n$ is (asymptotically) as big as the average amount of information needed to describe $X_n$.

These notions of noise sensitivity are relevant for infinite groups. They are trivially satisfied for finite groups by eventual equidistribution -- see Theorem~\ref{prop:finite}. For simplicity, we restricted our investigations to finitely generated groups and finitely supported probability measures, though the notions make sense in a wider setting.

We first point out two elementary obstructions to $\ell^1$-noise sensitivity.

\begin{theorem} Let $G$ be a countable group and $\mu$ a finitely supported probability measure on $G$.
\begin{enumerate}
\item If $G$ admits a non-trivial homomorphism onto a free abelian group, then $(G,\mu)$ is not $\ell^1$-noise sensitive.
\item If $(G,\mu)$ is $\ell^1$-noise sensitive, then it is Liouville.
\end{enumerate}
\end{theorem}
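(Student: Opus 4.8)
The plan is to refute $\ell^1$-noise sensitivity for each fixed noise level $\r\in(0,1)$ by producing, in each situation, a uniformly bounded \emph{test statistic} $F:G\times G\to\mathbb R$ whose expectation under the coupled law $\mathcal L(X_n,Y_n^\r)$ differs from its expectation under the independent law $\mathcal L(X_n,X_n')$ by an amount bounded away from $0$. Since $|\E_\nu F-\E_{\nu'}F|\le\|F\|_\infty\,\|\nu-\nu'\|_{\ell^1}$, such a persistent gap forces the $\ell^1$-distance to stay bounded below, contradicting sensitivity. Throughout, $\xi_i$ are i.i.d.\ $\mathrm{Bernoulli}(\r)$ indicators of resampling, $R=\{i:\xi_i=1\}$, the fresh samples are $\tilde s_i$, and $s_i'=s_i$ when $\xi_i=0$ and $s_i'=\tilde s_i$ otherwise, so that $X_n=s_1\cdots s_n$ and $Y_n^\r=s_1'\cdots s_n'$.

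For part (1), composing the surjection $G\twoheadrightarrow\Z^d$ with a coordinate projection gives a non-trivial homomorphism $\phi:G\to\Z$, and I let $F$ depend only on $\phi(g_1)-\phi(g_2)$. Since $\mu$ is finitely supported, $\phi(s_1)$ is bounded; write $\s^2=\mathrm{Var}(\phi(s_1))$, which I take to be positive (the non-degenerate case, automatic when $\mu$ is symmetric). For the coupled pair, $\phi(X_n)-\phi(Y_n^\r)=\sum_i\xi_i\big(\phi(s_i)-\phi(\tilde s_i)\big)$ is a sum of $n$ i.i.d.\ centred bounded terms of variance $2\r\s^2$, so after scaling by $\sqrt n$ it converges to $N(0,2\r\s^2)$; for the independent pair, $\phi(X_n)-\phi(X_n')$ scales to $N(0,2\s^2)$. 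As $\r<1$ these centred Gaussians have different variances, so there is a threshold $t$ with $\P(|\phi(X_n)-\phi(Y_n^\r)|\le t\sqrt n)$ and $\P(|\phi(X_n)-\phi(X_n')|\le t\sqrt n)$ tending to distinct limits. Taking $F=\mathbbm{1}\{|\phi(g_1)-\phi(g_2)|\le t\sqrt n\}$ yields the gap; equivalently, the $\ell^1$-distance only decreases under the map $(g_1,g_2)\mapsto\phi(g_1)-\phi(g_2)$. The mechanism is that the abelian image of the noised walk stays in an $O(\sqrt{\r n})$ window of $X_n$, far narrower than the $O(\sqrt n)$ spread of two independent walks.

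For part (2), since $(G,\mu)$ non-Liouville produces a non-constant bounded $\mu$-harmonic function $h$, I take $F(g_1,g_2)=h(g_1)h(g_2)$. The crux is a conditional independence. Let $\mathcal G=\sigma\big(\{\xi_i\}_i,\{s_i\}_{i\notin R}\big)$ record the resampling pattern and all \emph{non}-resampled increments. Conditionally on $\mathcal G$, the elements $X_n$ and $Y_n^\r$ are produced by one and the same formula from the two independent families $\{s_i\}_{i\in R}$ and $\{\tilde s_i\}_{i\in R}$, hence they are conditionally independent with equal conditional law. Writing $M_n=\E[h(X_n)\mid\mathcal G]$, this gives $\E[h(X_n)h(Y_n^\r)]=\E[M_n^2]$, whereas for the independent pair $\E[h(X_n)h(X_n')]=h(e)^2$; it remains to bound $\E[M_n^2]$ strictly above $h(e)^2$.

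To do so I introduce the first resampling time $\t=\min\{i:\xi_i=1\}$, which is geometric of parameter $\r$ and $\mathcal G$-measurable, together with the stopped element $X_{\t-1}$, also $\mathcal G$-measurable since the prefix is unresampled. Harmonicity and the Markov property at time $\t$ give $\E[h(X_n)\mid X_{\t-1},\t]=h(X_{\t-1})$, so the conditional Jensen inequality applied to $\sigma(X_{\t-1},\t)\subseteq\mathcal G$ yields $\E[M_n^2]\ge\E[h(X_{\t-1})^2]\ge\r\sum_{k=0}^{n-1}(1-\r)^k\,\E[h(X_k)^2]$. Now $h(X_k)$ is a bounded martingale, so $\E[h(X_k)^2]$ is non-decreasing and converges to $\E[h_\infty^2]$, with $h_\infty$ its non-degenerate $L^2$-limit (non-degeneracy being the standard input, e.g.\ $\mu$ symmetric with generating support, which forces $\E[h_\infty^2]>h(e)^2$). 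The displayed geometric average of values all $\ge h(e)^2$ and tending to $\E[h_\infty^2]$ therefore exceeds $h(e)^2$ by a fixed $\d_\r>0$, closing the argument. I expect the main obstacle to be exactly the identification of the $\s$-field $\mathcal G$ that simultaneously decouples $X_n$ from $Y_n^\r$ and keeps $X_{\t-1}$ measurable: this is what turns the heuristic ``a little noise cannot decorrelate the limiting boundary behaviour'' into the clean bound $\E[M_n^2]\ge\E[h(X_{\t-1})^2]$.
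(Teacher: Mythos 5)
Your part (1) is essentially the paper's own proof: the paper reduces to the abelian quotient via the observation that $\ell^1$-distance contracts under homomorphisms (Corollary~\ref{cor:surject}), then invokes the central limit theorem exactly as you do --- the noised walk stays within $O(\sqrt{\r n})$ of $X_n$ while an independent copy spreads at scale $\sqrt n$; your indicator test statistic is just an explicit witness of this Gaussian discrepancy. The non-degeneracy hypothesis $\mathrm{Var}(\phi(s_1))>0$ that you flag is also implicitly assumed in the paper's Proposition~\ref{prop:abelian}, and it is genuinely needed: if $\mu$ pushes forward to a point mass on the abelian quotient (e.g.\ $G=\Z\times\Z/2\Z$ with $\mu$ uniform on $\{(1,0),(1,1)\}$, which generates $G$ as a group), the pair $(X_n,Y_n^\r)$ becomes asymptotically indistinguishable from $(X_n,X_n')$, so the statement itself fails and no proof can dispense with some such assumption.

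Your part (2) is correct but takes a genuinely different route. The paper never builds a coupling argument: from $\ell^1$-sensitivity it extracts only the ``first-increment'' consequence $\|\mu^{\diag}\ast\mu^2_{n-1}-\mu^2_n\|_1\to 0$, and then tests both measures against $(x,y)\mapsto\nu(x^{-1}A)\nu(y^{-1}A)$ for a boundary set $A$: by stationarity these integrals equal the constants $\nu(A)^2$ and $\sum_z\mu(z)\nu(z^{-1}A)^2$ for \emph{every} $n$, and a strict Jensen inequality (available for some $A$ precisely because the boundary is non-trivial) separates them. You instead exploit the full structure of the noise --- conditional independence of $X_n$ and $Y_n^\r$ given the refresh pattern and unrefreshed increments --- to write $\E[h(X_n)h(Y_n^\r)]=\E[M_n^2]$ and bound it below through the first refresh time. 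Both proofs ultimately test $\pi_n^\r$ against $h\otimes h$ for a bounded harmonic $h$ (the paper's is $h_A(x)=\nu(x^{-1}A)$); the paper's identity-based route is shorter and needs non-constancy of $h_A$ on $\supp\mu$, while yours needs only non-degeneracy of the martingale limit, at the price of the coupling analysis. Two points to tighten. First, your parenthetical that symmetry of $\mu$ forces $\E[h_\infty^2]>h(e)^2$ is not the right fix: an arbitrary non-constant bounded harmonic function could a priori have a degenerate limit along the walk started at $e$. Instead take $h(g)=\E_g[Z]$ for a non-constant bounded invariant random variable $Z$ of the trajectory space, which exists by non-triviality of the Poisson boundary (the same input the paper uses via \cite{Furstenberg1971,KV}); then $h(X_n)\to Z$ almost surely and $\E[h_\infty^2]>h(e)^2$ automatically, with no symmetry assumption. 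Second, the identity $\E[h(X_n)\mid X_{\t-1},\t]=h(X_{\t-1})$ is only meaningful on $\{\t\le n\}$, so replace $X_{\t-1}$ by $X_{(\t-1)\wedge n}$ (which is still $\mathcal G$-measurable, since on $\{\t>n\}$ all of $s_1,\dots,s_n$ are unrefreshed); this costs nothing in your displayed lower bound.
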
 

This is a concatenation of Corollary \ref{cor:surject} and Theorem \ref{thm:ell1Liouville}. The first part essentially follows from the central limit theorem. For the second part, recall that the space of bounded harmonic functions on $(G,\mu)$ is parametrized by the Poisson boundary \cite{Furstenberg1971}. The random walk $(G,\mu)$ is Liouville if this boundary is reduced to a point, i.e. if all bounded harmonic functions are constant. In the non-Liouville case, the first increment $s_1$ already carries information on the position of $X_n$. For instance in a free group, the first letter of a minimal representative word of $X_n$ is correlated to the first increment. This prevents $\ell^1$-noise sensitivity of the free group. In fact, this obstruction can be strengthen to show that the free group is not even noise sensitive at large scale, see Proposition~\ref{notlargefree}.

By~\cite{KV}, Liouville property is also equivalent to the vanishing of the asymptotic entropy $\lim \frac{1}{n}\H(X_n)$ which measures the average amount of information in one increment of $X_n$. This provides an elementary obstruction to entropy noise sensitivity.

\begin{theorem}
If $(G,\mu)$ is entropy noise sensitive, then it is Liouville.
\end{theorem}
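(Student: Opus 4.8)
The plan is to prove the contrapositive: if $(G,\mu)$ is not Liouville, then it is not entropy noise sensitive. First I would record that resampling preserves the law of the walk. Each noised increment $t_i$ equals $s_i$ with probability $1-\rho$ and an independent fresh $\mu$-sample otherwise, so $t_i$ has law $\mu$; since the triples generating the $t_i$ are independent across $i$, the $t_i$ are i.i.d.\ $\mu$ and hence $Y_n^\rho=t_1\cdots t_n$ has the same law as $X_n$. In particular $\H(Y_n^\rho)=\H(X_n)$, so $\H(Y_n^\rho\mid X_n)=\H(X_n)-I(X_n;Y_n^\rho)$ and entropy noise sensitivity is \emph{exactly} the assertion $I(X_n;Y_n^\rho)=o(\H(X_n))$. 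By the Kaimanovich--Vershik criterion recalled above, non-Liouville means the asymptotic entropy $h=\lim\frac1n\H(X_n)$ is positive, whence $\H(X_n)\sim hn$. It therefore suffices to exhibit a constant $c>0$ with $I(X_n;Y_n^\rho)\ge cn$ for all large $n$, as this forces $I(X_n;Y_n^\rho)/\H(X_n)\to c/h>0$.

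The second step is to read the joint process as a single random walk. The pair $Z_n=(X_n,Y_n^\rho)$ is the random walk on $G\times G$ with step law $\tilde\mu=(1-\rho)\,\Delta_*\mu+\rho\,(\mu\otimes\mu)$, where $\Delta_*\mu$ denotes $\mu$ pushed onto the diagonal: with probability $1-\rho$ both coordinates move by the same increment, with probability $\rho$ by independent ones. Its Avez asymptotic entropy $\tilde h=\lim\frac1n\H(Z_n)$ exists by subadditivity on $G\times G$, and from $I(X_n;Y_n^\rho)=2\H(X_n)-\H(Z_n)$ together with $\H(X_n)/n\to h$ I get $\frac1n I(X_n;Y_n^\rho)\to 2h-\tilde h$. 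The theorem is thus equivalent to the strict inequality $\tilde h<2h$ whenever $h>0$; the soft bound $\tilde h\le 2h$ is merely $I\ge 0$ and says nothing.

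I expect this strict inequality to be the main obstacle. To set it up I would use the standard increment--boundary form of the asymptotic entropy, $h=\H(\mu)-\H(s_1\mid\nu)$, where $\nu$ is the Poisson boundary and $\H(s_1\mid\nu)$ is the monotone limit of $\H(s_1\mid X_n)=\H(\mu)-I(s_1;X_n)$; monotonicity comes from the data-processing inequality along the chain $s_1\to X_n\to X_{n+1}$. Applying this to both walks rewrites the target as $2h-\tilde h=I(s_1;t_1)+\H\big((s_1,t_1)\mid\tilde\nu\big)-2\,\H(s_1\mid\nu)>0$, with $\tilde\nu$ the boundary of $Z_n$; equivalently $I\big((s_1,t_1);\tilde\nu\big)<2h$. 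The content is that the $(1-\rho)$-fraction of diagonal steps, on which the two coordinates make identical moves, keep the two coordinate boundaries genuinely dependent, so that $\tilde\nu$ does not decouple the first increments as completely as the two coordinate boundaries separately would.

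The delicate point, and where the real work lies, is quantitative: a single shared increment (or, say, the image of the walk in an abelianization, where the two coordinates are bivariate Gaussians of fixed correlation $1-\rho$) contributes only $O(1)$ to the mutual information, consistently with the Liouville regime. The positive \emph{rate} $2h-\tilde h$ must instead be harvested from the $\Theta(n)$ maximal runs of consecutive diagonal steps scattered along the trajectory, each of which synchronizes a macroscopic portion of the two boundary rays. I would make this precise through a regeneration/ergodic argument for the stationary sequence of increments conditioned on the boundary, showing that these runs contribute comparable and asymptotically independent amounts of shared information; establishing their near-independence, against the non-commutativity of $G$, is the crux.
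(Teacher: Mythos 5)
Your reduction is correct as far as it goes: since $Y_n^\rho$ has the same law as $X_n$, entropy noise sensitivity is equivalent to $I(X_n;Y_n^\rho)=o(\H(X_n))$ for every $\rho$, and when $h=\lim\frac1n\H(X_n)>0$ the theorem amounts to exhibiting one $\rho\in(0,1)$ with $\tilde h(\rho)<2h$, where $\tilde h(\rho)$ is the asymptotic entropy of the joint walk on $G\times G$ with step law $(1-\rho)\mu^{\diag}+\rho\,\mu\times\mu$. But that strict inequality \emph{is} the theorem, and you never prove it: your last two paragraphs openly defer it (``the main obstacle'', ``the crux'') and replace it with a speculative sketch --- boundary-conditioned increment entropies, regeneration along maximal diagonal runs, near-independence of their contributions --- none of which is carried out, and whose feasibility is exactly what is in doubt. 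As it stands, the proposal is a correct reformulation plus an unproven key step, not a proof.

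The missing step is closed in the paper by an elementary blocking argument, and the reason it works is a quantifier structure your plan works against: entropy noise sensitivity demands the ratio tend to $1$ for \emph{all} $\rho$, so to refute it one may choose $\rho$ small depending on the walk, whereas you aim at an arbitrary fixed $\rho$, which is harder and unnecessary. Concretely, the paper uses the Kaimanovich--Vershik identity $h=\inf_N \H(\mu_N)/N=\lim_N \H(\mu_N)/N>0$: fix $\varepsilon>0$ and $N$ with $\H(\mu_N)\le (h+\varepsilon)N$, cut $X_n$ into $n/N$ blocks of law $\mu_N$; a block is touched by the noise with probability $\rho_N=1-(1-\rho)^N$, so subadditivity of entropy over blocks gives $\H(X_n,Y_n^\rho)\le (1+\rho_N)\frac{n}{N}\H(\mu_N)\le (1+\rho_N)(h+\varepsilon)n$, hence $\H(Y_n^\rho|X_n)\le \left(\rho_N+\tfrac{2\varepsilon}{h}\right)\H(X_n)$; choosing first $\varepsilon$ (hence $N$) and then $\rho$ small makes this $\le c\,\H(X_n)$ for any prescribed $c>0$. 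In your language this yields $\tilde h(\rho)\le (1+c)h<2h$ for $\rho$ small --- no Poisson-boundary analysis, no independence of diagonal runs, and in fact it disproves even \emph{partial} entropy noise sensitivity. Note also that this blocking bound degenerates as $N\to\infty$ for fixed $\rho$ (since $\rho_N\to 1$), so it does not give your stronger target $\tilde h(\rho)<2h$ for every fixed $\rho$; nothing in the paper establishes that stronger statement, which is another sign that the route you chose carries a genuinely harder burden than the theorem requires.
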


This is a particular case of Theorem \ref{prop:enstoLiouville}. It follows directly from the upper semi-continuity of the asymptotic entropy \cite[Proposition 4]{AAV}. By \cite{FHTVF}, any non-virtually nilpotent finitely generated group admits a symmetric probability measure of finite entropy (usually not finitely supported) for which it is non-Liouville. It is a fortiori neither $\ell^1$-noise sensitive, nor entropy noise sensitive. 

Despite these obstructions, we provide examples of noise sensitive random walks on groups.

\begin{theorem}\label{main3} Regarding entropy noise sensitivity:
\begin{itemize}
\item[$\bullet$] finitely generated abelian groups are entropy noise sensitive with respect to any finitely supported measure,
\item[$\bullet$] the lamplighter group $\Z/2\Z \wr \Z$ is entropy noise sensitive with respect to the switch-walk-switch measure,
\item[$\bullet$] the permutational extension of the first Grigorchuk group described in \cite{BartholdiErschler2012} is partially entropy noise sensitive for some switch and walk measure, i.e.
\[
\exists c >0, \forall \r\in (0,1), \quad \liminf \frac{\H(Y_n^\r|X_n)}{\H(X_n)} \ge c.
\]
\end{itemize}
\end{theorem}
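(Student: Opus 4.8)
The plan is to handle the three families separately, since each requires different machinery, though all three ultimately hinge on understanding how resampling increments perturbs the terminal position.

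For the \emph{virtually abelian} case, I would reduce to $\Z^d$ by passing to a finite-index subgroup (the abelian part contributes the entropy; the finite quotient is handled by eventual equidistribution as in Proposition~\ref{prop:finite}). On $\Z^d$, both $X_n$ and $Y_n^\r$ are sums of i.i.d.\ increments obeying a central limit theorem, so each is asymptotically Gaussian at scale $\sqrt{n}$ with a covariance that is a fixed multiple of the increment covariance. The point is that resampling a $\r$-fraction of increments leaves the \emph{sum} distributed almost identically to a fresh independent sum, up to the small shared portion. Concretely, I would write $Y_n^\r = X_n - \sum_{i\in R} s_i + \sum_{i\in R} s_i'$ where $R$ is the random resampled set of expected size $\r n$; the increments outside $R$ are common to both, but they number about $(1-\r)n$ and their sum has fluctuations of order $\sqrt{n}$, so conditioning on $X_n$ only pins down $Y_n^\r$ to within $O(\sqrt n)$ in a space whose entropy grows like $\frac d2 \log n$. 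Thus $\H(Y_n^\r|X_n)$ and $\H(X_n)$ differ only by an $O(1)$ additive constant, and since $\H(X_n)\sim \frac d2\log n\to\infty$, the ratio tends to $1$. The technical work here is a local CLT estimate to pass from distributional closeness to entropy closeness; this is the cleanest of the three.

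For the \emph{lamplighter} $\Z/2\Z\wr\Z$ with switch-walk-switch, the position decomposes as a lamp configuration together with a base-$\Z$ coordinate. The asymptotic entropy is positive and governed by the lamp configuration on the range of the underlying simple random walk on $\Z$, which is a recurrent walk, so the range has size of order $\sqrt n$ and each visited site carries one bit of lamp information. I would argue that knowing $X_n$ tells us the final lamp pattern, but $Y_n^\r$ re-randomizes the lamps at a positive-density subset of visited sites (those whose controlling switch was resampled), and because recurrence forces sites to be visited many times, the parity at each site is a sum of many independent switches, so resampling a $\r$-fraction flips each visited site's lamp to near-uniform. The key estimate is that the conditional entropy of the noised lamp configuration given the original one is asymptotically the full lamp entropy; this should follow from the independence of switches across distinct times together with a law-of-large-numbers count of resampled switches at each visited site.

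The permutational Grigorchuk extension from~\cite{BartholdiErschler2012} is where I expect the main obstacle, which is why the claim there is only the weaker \emph{partial} statement with a uniform constant $c>0$. The difficulty is that this group lacks the clean product structure of the previous two examples, so there is no exact decomposition of $X_n$ into independent coordinates, and the entropy growth is subexponential but not explicitly a sum over a recurrent range. The plan is to use the specific self-similar/permutational structure: the group acts on a rooted tree, and the position records permutations at the vertices; resampling increments perturbs the switch part at many tree vertices, and I would try to show that a uniform positive fraction of the information in $X_n$ is carried by switch data that the noise re-randomizes. Rather than proving an asymptotic equivalence, I would lower-bound $\H(Y_n^\r|X_n)$ by the conditional entropy of the switch configuration alone, restricted to a positive-density set of independently-resampled coordinates, and compare this to an upper bound on $\H(X_n)$; matching the growth rates up to a constant factor (uniform in $\r$, since even a single resampled increment contributes) would yield the constant $c$. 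The obstacle is controlling correlations introduced by the walk coordinate and the tree action, which prevent upgrading $c$ to $1$; isolating an independent switch-entropy contribution that survives conditioning on $X_n$ is the crux.
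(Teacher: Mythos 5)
Your first two bullets follow essentially the paper's own proofs. For abelian groups (Proposition~\ref{prop:abelian}) the paper uses commutativity to write $Y_n^\r=X_nZ_\ell$ with $\ell\approx\r n$ resampled increments, so that the conditional entropy given $X_n$ is still of order $\frac d2\log n$, matching $\H(X_n)$ up to $O(1)$; that is your CLT argument. (One slip in your mechanism: the conditional spread of $Y_n^\r$ given $X_n$ comes from the resampled part $\sum_{i\in R}(s_i'-s_i)$, not from the fluctuations of the common increments --- when $\r=0$ the common part fluctuates just as much, yet $Y_n^0$ is pinned exactly.) For the lamplighter, your argument is precisely the proof of Theorem~\ref{thm:lamplighter}: the paper makes ``visited many times'' quantitative via Ray--Knight (a $(1-o(1))$-fraction of the range has local time at least $\e\sqrt n$), so all but a vanishing fraction of visited lamps contain a fresh switch, hence are fair coins independent of $X_n$, while $\H(X_n)=\E|R_n|+O(\log n)$.

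The genuine gap is in the third bullet, and it is exactly the point your plan elides. Lower-bounding $\H(Y_n^\r|X_n)$ by the switch entropy carried by resampled coordinates in the inverted orbit is Lemma~\ref{lemma:Hinv} of the paper: it gives $\H(Y_n^\r|X_n)\ge\r\H(\mu_\L)\E|\O(X_n)|$, and combined with Amir--Virag's $\H(X_n)\asymp\E|\O(X_n)|$ this yields only a bound of the form $c'\r\,\H(X_n)$. That constant degenerates as $\r\to0$, whereas partial entropy noise sensitivity demands a single $c>0$ valid for \emph{all} $\r\in(0,1)$; the paper itself warns that the lemma ``in its own does not provide information on noise sensitivity because the righthand side depends on the noise parameter $\r$.'' Your parenthetical justification (``uniform in $\r$, since even a single resampled increment contributes'') cannot repair this: one resampled increment contributes $O(1)$ entropy against $\H(X_n)\to\infty$. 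The missing idea is a renormalization step exploiting self-similarity: the switch-walk measure induces (slowed-down) switch-walk random walks on the level-$k$ sections, and the effective refreshing parameter strictly increases under taking sections, $\r_1=1-\frac{1-\r}{1+\r/(d_0-1)}>\r$, so for every $\r>0$ there is a finite level $k$ at which the effective noise is at least $1/2$. Applying Lemma~\ref{lemma:Hinv} at that level and summing over sections via $|\O(X_n)|=\sum_{v\in\mathrm{Level}(k)}|\O(X_n|_v)|$ gives $\H(Y_n^\r|X_n)\ge\frac12\H(\mu_\L)\E|\O(X_n)|\ge c\H(X_n)$ with $c$ independent of $\r$. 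Without this noise-amplification argument (or a substitute for it), the approach you describe proves a strictly weaker, $\r$-dependent statement.
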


The first point is in Proposition \ref{prop:abelian}, the second is Theorem \ref{thm:lamplighter} and the third point a particular case of Theorem~\ref{thm:dirgps}.

\begin{theorem} On the infinite dihedral group $D_\infty=\langle a,b|a^2,b^2\rangle$,
\begin{itemize}
\item[$\bullet$] the simple random walk is not $\ell^1$-noise sensitive,
\item[$\bullet$] the lazy simple random walk is $\ell^1$-noise sensitive.
\end{itemize}
\end{theorem}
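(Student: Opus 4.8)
The plan is to exploit that the Cayley graph of $D_\infty$ for the generating set $\{a,b\}$ is the bi-infinite line. Labelling each group element by its signed distance from the identity identifies $D_\infty$ with $\Z$, and under this identification right multiplication by $a$ or by $b$ moves the label by $\pm1$. A trajectory $X_n=s_1\cdots s_n$ is then recorded by a position $M_n\in\Z$ and the noised trajectory $Y_n^\r$ by a position $N_n\in\Z$, both performing nearest-neighbour walks on $\Z$. Since $g\mapsto M$ is a bijection, the pair $(X_n,Y_n^\r)$ has the same law as $(M_n,N_n)$ and the independent pair $(X_n,X_n')$ the same law as $\mathcal L(M_n)\otimes\mathcal L(M_n)$ (note $N_n\overset{d}{=}M_n$). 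Thus $\ell^1$-noise sensitivity is equivalent to
\[
\bigl\|\mathcal L(M_n,N_n)-\mathcal L(M_n)\otimes\mathcal L(M_n)\bigr\|_{\TV}\xrightarrow[n\to\infty]{}0,
\]
and the whole statement becomes a two-dimensional question on $\Z$.

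The key is how an increment translates into a signed step. Writing $b_i=\mathbbm 1(s_i\ne e)$ and letting $\xi_i\in\{\pm1\}$ be the uniform sign encoding the choice between $a$ and $b$ (independent of the move-indicators $b_j$), the signed displacement of $X$ at step $i$ is $\eta_i=b_i\,\xi_i\,E_{i-1}$ with $E_{i-1}=\prod_{j<i}(-1)^{b_j}$: the sign with which $\{a,b\}$ act flips each time the walker changes parity on the line. Hence $M_n=\sum_i\eta_i$ and, with tildes for the noised quantities, $N_n=\sum_i\tilde b_i\,\tilde\xi_i\,\tilde E_{i-1}$. For the simple random walk every step is genuine, $b_i\equiv1$, so $E_{i-1}=(-1)^{i-1}$ is deterministic and $\eta_i=(-1)^{i-1}\xi_i$ is merely a relabelling of $s_i$; resampling $s_i$ is the same as resampling $\eta_i$. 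Then $(M_n,N_n)$ is a simple random walk on $\Z$ together with its $\r$-noised copy, $\E[\eta_i\tilde\eta_i]=1-\r$, so $\mathrm{Cov}(M_n,N_n)=(1-\r)n$ while $\mathrm{Var}(M_n)=\mathrm{Var}(N_n)=n$. By the central limit theorem $(M_n,N_n)/\sqrt n$ converges to a bivariate Gaussian of correlation $1-\r\ne0$, whereas the product law rescales to the uncorrelated Gaussian; since total variation is preserved by the injective rescaling and forces weak limits to coincide, the two laws stay apart. This proves the first bullet.

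For the lazy walk ($s_i=e$ with probability $1/2$) the parity $E_{i-1}$ becomes genuinely random, and this is what kills the correlation. Resampling a step changes its $e$/non-$e$ status with probability $\r/2$, independently of the step, so the relative parity sign satisfies $\E\!\left[E_{i-1}\tilde E_{i-1}\right]=(1-\r)^{i-1}$. Using that the signs $\xi_i$ are mean-zero and independent of the parity environment, every off-diagonal term $\E[\eta_i\tilde\eta_k]$ with $i\ne k$ vanishes and
\[
\E[\eta_i\,\tilde\eta_i]=\tfrac12(1-\r)^{i},\qquad\text{hence}\qquad \mathrm{Cov}(M_n,N_n)=\sum_{i=1}^{n}\tfrac12(1-\r)^{i}=O(1),
\]
while $\mathrm{Var}(M_n)=\mathrm{Var}(N_n)=n/2$. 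Thus $M_n$ and $N_n$ are asymptotically uncorrelated. Structurally, the shared randomness (the common signs $\xi_i$) enters the two positions with the same sign on half of the coordinates and with opposite sign on the other half, in asymptotically equal proportion, so its net effect cancels, just as $U+V$ and $U-V$ are independent for i.i.d.\ mean-zero $U,V$ of equal variance.

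The remaining, and main, difficulty is to promote this decorrelation to convergence in total variation, as the definition demands. For this I would establish a local central limit theorem for the joint law: compute the joint characteristic function $\E\!\left[e^{isM_n+itN_n}\right]$ through a transfer operator encoding the parity-mismatch chain $E_{i-1}\tilde E_{i-1}$, show that after Fourier inversion on $\Z^2$ it factorises as $\E[e^{isM_n}]\,\E[e^{itN_n}]$ up to a negligible error, and deduce that $\P(M_n=x,N_n=y)$ is uniformly close to $\P(M_n=x)\,\P(N_n=y)$. Summing the pointwise differences then yields $\TV\to0$ and the second bullet. Throughout one must track the lattice constraints, both coordinates being supported on a coset of $2\Z$ fixed by the number of genuine moves. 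It is this local-limit step, not the moment computation, where the real work lies.
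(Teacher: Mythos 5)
Your first bullet is fine: identifying the Cayley graph of $D_\infty$ with $\Z$, observing that for the non-lazy walk the $\pm$ sign of each generator alternates deterministically with time, and concluding via the bivariate CLT that the joint law of $(M_n,N_n)$ has a correlated Gaussian limit while the product law does not, is a correct argument (and essentially the paper's: the paper reduces the simple walk to a simple random walk on $\Z$ and invokes its result on abelian groups, where the same CLT obstruction appears).

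For the lazy walk, however, there is a genuine gap, and it sits exactly where the paper says the real work lies. Your moment computation (geometric decay of $\E[E_{i-1}\tilde E_{i-1}]$, hence $\mathrm{Cov}(M_n,N_n)=O(1)$ against variances of order $n$) is correct in structure, but asymptotic decorrelation, or even joint weak convergence of $(M_n,N_n)/\sqrt n$ to a product Gaussian, is strictly weaker than the $\ell^1$ (total variation) convergence that the definition requires: weak convergence to the same limit is compatible with the two laws remaining at TV distance bounded away from $0$ (e.g.\ through persistent local or parity correlations at lattice scale). The paper makes this point explicitly --- it notes that the Lyapunov CLT argument yields only noise sensitivity \emph{at large scale}, and that to get $\ell^1$ convergence one must construct an actual coupling of $(X_n,Y_n^\r)$ with $(X_n,X_n')$. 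That coupling construction (ordering the inter-refresh words by length, assigning the signs $\a_j$ by rules that drive the difference of the two trajectories toward $0$, using the many length-one words to force a crossing so that $|\Delta_k|\le 1$, and then merging two neighbouring walks on $\Z$) is the entire content of the second bullet. Your proposal replaces it with a one-sentence plan --- ``establish a local central limit theorem for the joint law via a transfer operator and show the characteristic function factorises up to negligible error'' --- which you do not carry out; note that for this to give TV convergence the pointwise error must be $o(1/n)$ uniformly, the correlated parity constraints of the two coordinates must be shown to decouple, and the factorisation must hold over the whole fundamental domain of frequencies, none of which is routine. So the proposal correctly sets up the problem and correctly identifies the missing step, but the missing step is the theorem. (A minor additional discrepancy: the paper's lazy walk has $\mu(a)=\mu(b)=\mu(e)=1/3$, not laziness $1/2$; this only changes your decay rate $(1-\r)^{i}$ to $(1-8\r/9)^{i}$ and is harmless.)
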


This is Theorem \ref{dihedral}. It follows that $\ell^1$-noise sensitivity is not a geometric property, since it depends on the measure. Moreover as our notions of noise sensitivity are stable under taking direct products (of groups and measures), we obtain noise sensitive random walks on groups commensurable to $\Z^d$ for any rank~$d$.

We expect that morphisms to $\Z$ and non-Liouville property (significance of the first increment) are the only two obstructions for a group to be noise sensitive. This raises the:

\begin{question}\label{conj:ell1}
If $(G,\mu)$ is Liouville and has no virtual morphism onto $\Z$, is it  $\ell^1$-noise sensitive?
\end{question}
Another (unexpected) type of obstruction to $\ell^1$-noise sensitivity would probably be very interesting and significant.
As a particular case, we conjecture that:

\begin{conjecture}\label{conj:Grig}
The first Grigorchuk group is $\ell^1$-noise sensitive.
\end{conjecture}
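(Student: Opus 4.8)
Since the statement is a conjecture rather than a theorem, I can only propose a line of attack. The starting point I would use is the reformulation of $\ell^1$-noise sensitivity as an asymptotic \emph{decoupling} statement. Writing $\|\cdot\|_1$ for the $\ell^1$-distance and using that resampling preserves the marginal, $\mathcal L(Y_n^\r)=\mathcal L(X_n)$, one has
\[
\big\|\mathcal L(X_n,Y_n^\r)-\mathcal L(X_n)\otimes\mathcal L(X_n)\big\|_1=\E_{g\sim X_n}\big\|\mathcal L(Y_n^\r\mid X_n=g)-\mathcal L(X_n)\big\|_1 ,
\]
so the goal becomes to show that, for a typical value $g$ of $X_n$, conditioning on $X_n=g$ does not appreciably alter the law of the noised output $Y_n^\r$. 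The first Grigorchuk group $G$ is a natural candidate precisely because the two obstructions identified earlier are both absent: being a finitely generated torsion group it has no (virtual) morphism onto $\Z$, and the simple random walk on it is Liouville, so the necessary conditions for $\ell^1$-noise sensitivity are met.

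The second step would exploit the self-similar, contracting, branch structure of $G$ acting on the rooted binary tree. Because $G$ is contracting, an element of word length $\ell$ is already determined by its action on the first $O(\log\ell)$ levels of the tree; since the walk has sublinear speed ($|X_n|=o(n)$ by subexponential growth), there is an event of probability tending to $1$ on which both $X_n$ and $Y_n^\r$ are determined by their images in the finite quotient $G_{k(n)}=G/\mathrm{St}_G(k(n))$, for a slowly growing level $k(n)=O(\log n)$. On this event the projection is injective on the relevant ball, hence an $\ell^1$-isometry on the supported measures, so the conjecture reduces to a quantitative decoupling estimate for the projected walk on the \emph{growing} sequence of finite groups $G_{k(n)}$. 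Finite groups are trivially noise sensitive by Proposition~\ref{prop:finite}, so the whole difficulty is to make the decoupling uniform in $k=k(n)$ and to show it is reached within $n$ steps. Here one must verify that the contraction keeps $\log|G_{k(n)}|$ of order $o(n)$, leaving the projected walk ample time to decorrelate; this is plausible given sublinear speed but needs the contraction constant to be controlled against the $\approx 2^{k}$ growth of $\log|G_k|$.

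The heart of the argument, and the step I expect to be the main obstacle, is this decoupling on $G_{k(n)}$, for which I would attempt a renormalisation over the levels of the tree, mirroring the way growth and the Liouville property are established for $G$. The section map $\psi\colon\mathrm{St}_G(1)\to G\times G$ expresses the deep behaviour of $X_n$ through its two sections acting on the subtrees, which are themselves Grigorchuk-type walks of contracted length, interleaved according to the easily controlled $\Z/2\Z$-valued top coordinate. Noising the increments of $X_n$ induces a coupled noising of the increments of the two section walks, so decoupling at level $k$ would follow inductively from decoupling at level $k-1$ for the shorter section walks, provided one also decouples the random interleaving assigning steps to the two subtrees. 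The twin obstacles are that the two sections are driven by the \emph{same} resampled increments and are therefore correlated, and that one must keep the decoupling rate from degrading across the $O(\log n)$ levels of the recursion.

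Controlling these correlations in total variation is considerably more demanding than the conditional-entropy estimates available for the permutational extension in Theorem~\ref{thm:dirgps}: entropy tolerates a uniform lower bound on $\H(Y_n^\r\mid X_n)/\H(X_n)$, whereas $\ell^1$ control requires near-vanishing of the mutual information, uniformly through the renormalisation. This gap between $\ell^1$ and entropy control is, to my mind, exactly why the statement is phrased as a conjecture, and closing it — most likely by a sharp recursive total-variation estimate along the branch structure — is where the real work lies.
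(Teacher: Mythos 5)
You are addressing Conjecture~\ref{conj:Grig}, which the paper states as open and does not prove, so there is no proof of record to compare against; your text is a research programme and must be judged as such. Its sound parts are real: the decoupling identity for the $\ell^1$-distance is correct; the two known obstructions (a virtual homomorphism onto $\Z$, non-Liouville behaviour) are indeed absent for the first Grigorchuk group; and by contraction the projection $G\to G_k=G/\mathrm{St}_G(k)$ is injective on the ball of radius $n$ once $k\ge \log_2 n+O(1)$, so it preserves the $\ell^1$-distance between $\pi_n^\rho$ and $\mu_n^2$ exactly, and the conjecture is equivalent to a decoupling statement on the growing finite quotients $G_{k(n)}$. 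But this reduction is a repackaging of the problem, not progress on it, and the two steps that would turn it into a proof are both missing.

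First, the inference that the projected walk has ``ample time to decorrelate'' is where the argument breaks, and keeping $\log|G_{k(n)}|=o(n)$ does not repair it. Proposition~\ref{prop:finite} works through equidistribution, with a rate $\beta_1^n$ depending on the finite group; here $\log|G_{k(n)}|\asymp 2^{k(n)}$ is polynomially large in $n$, while the projected walk at time $n$ is supported on a set of size at most $\exp(Cn^{\alpha})$ with $\alpha<1$ (subexponential growth), i.e.\ an exponentially small fraction of $G_{k(n)}^2$. Equivalently, since $\mathrm{diam}(G_{k(n)})\gtrsim(\log|G_{k(n)}|)^{1/\alpha}$, equidistribution would require far more than $n$ steps. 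So what you actually need is noise sensitivity of finite groups \emph{before cut-off}, uniformly along the sequence $G_{k(n)}$ --- precisely the question the paper raises and leaves open immediately after Proposition~\ref{prop:finite}. Second, the renormalisation step has no $\ell^1$ mechanism behind it. The paper's section technique (Lemma~\ref{lemma:Hinv} and Theorem~\ref{thm:dirgps}) rests on two features with no total-variation analogue: the inequality $\H(Y_n^\rho\mid X_n)\ge\H(Y_n^\rho\mid Y_n')$, obtained by resampling in two stages and valid only because conditional entropy is monotone under conditioning; and the switch-walk measure, chosen precisely so that the induced walks on sections are again switch-walk walks with a strictly \emph{increased} refreshing parameter (the paper stresses that the choice of measure is heavily used for this). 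For the simple random walk on the Grigorchuk group itself, refreshing an increment changes its rooted part and hence reshuffles which subsequent increments feed which subtree: the induced perturbation of the two section walks is a correlated rearrangement of their increments, not an independent resampling at a larger parameter. You name this obstacle but give no coupling or total-variation estimate that controls it, let alone one that survives composition over the $O(\log n)$ levels of the recursion. That missing estimate is the entire content of the conjecture.
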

As for entropy noise sensitivity, we believe it is widely spread and we conjecture:

\begin{conjecture}\label{conj:entropy}
A random walk $(G,\mu)$ is entropy noise sensitive if and only if it is Liouville.
\end{conjecture}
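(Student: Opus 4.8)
The statement is an equivalence, and one implication is already in hand: the earlier theorem (a particular case of Theorem~\ref{prop:enstoLiouville}) shows that entropy noise sensitivity forces the Liouville property. So the plan is to attack the converse — assuming $(G,\mu)$ is Liouville, produce entropy noise sensitivity. The first step is a clean reformulation. Since resampling preserves the one-step law, $Y_n^\r$ has the same distribution as $X_n$, so $\H(Y_n^\r)=\H(X_n)$ and hence
\[
I(X_n;Y_n^\r)=\H(Y_n^\r)-\H(Y_n^\r\mid X_n)=\H(X_n)-\H(Y_n^\r\mid X_n).
\]
Thus entropy noise sensitivity is precisely the statement that the mutual information is negligible \emph{at the scale of the entropy}, $I(X_n;Y_n^\r)=o(\H(X_n))$. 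By~\cite{KV} the Liouville hypothesis is equivalent to $\H(X_n)=o(n)$, i.e.\ vanishing asymptotic entropy, so the real task is to upgrade a first-order (linear) estimate into a second-order one.

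Next I would set up the per-increment information $I(X_n;s_i)$, the natural analogue of the influence of a coordinate in the Boolean theory. Two facts come cheaply. Direct conditioning gives $I(X_n;s_1)=\H(X_n)-\H(X_{n-1})$, which is the entropy increment and therefore tends to the asymptotic entropy $h$; under the Liouville hypothesis $h=0$, so each boundary increment is asymptotically forgotten. More usefully, independence of the increments together with subadditivity of conditional entropy yields the ``total influence'' bound $\sum_{i=1}^n I(X_n;s_i)\le \H(X_n)$. The correlation between $X_n$ and $Y_n^\r$ can only be carried by the increments the two walks share, so the heart of the matter is to dominate the joint quantity $I(X_n;Y_n^\r)$ by the individual informations $I(X_n;s_i)$: a bound of the form $I(X_n;Y_n^\r)\le C(\r)\sum_i I(X_n;s_i)$, obtained by a hybrid argument that resamples one coordinate at a time, would suffice provided one also establishes $\sum_i I(X_n;s_i)=o(\H(X_n))$. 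It is reassuring that both ingredients hold in the abelian case, where the local limit theorem gives $\H(X_n)\sim\frac{d}{2}\log n$ while $I(X_n;Y_n^\r)$ remains bounded in $n$ (growing only like $\log(1/\r)$ as $\r\to 0$); this is exactly why Proposition~\ref{prop:abelian} goes through.

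I expect the main obstacle to be precisely this passage from single-coordinate to joint information. Mutual information does not decompose additively over the resampled coordinates — the abelian computation already shows $I(X_n;Y_n^\r)$ behaving like $\log(1/\r)$ rather than linearly in $\r$, so a naive telescoping will not close. At a conceptual level one can form the diagonal-noised walk on $G\times G$ with step law $(1-\r)\,\Delta_*\mu+\r\,(\mu\otimes\mu)$, whose pair is $(X_n,Y_n^\r)$; subadditivity shows this product walk has vanishing asymptotic entropy and is therefore again Liouville, but this only reproduces the automatic first-order bound $I(X_n;Y_n^\r)=o(n)$ and says nothing at the scale $\H(X_n)=o(n)$ that we actually need. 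What seems genuinely required is a spectral or hypercontractive input — an entropy analogue of the Benjamini--Kalai--Schramm influence criterion — capable of showing that the retained information collapses at every positive noise level. Such harmonic analysis is available for abelian groups and for the explicitly solvable examples treated here (the lamplighter, the Grigorchuk-type groups), but not in general, which is consistent with the statement remaining a conjecture; I would therefore expect a complete proof to proceed either by developing such a tool on the Poisson boundary or, more modestly, class by class.
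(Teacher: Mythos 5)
You should first be clear about the status of the statement: in the paper this is Conjecture~\ref{conj:entropy}, and the paper does \emph{not} prove it. The only direction it establishes is ``entropy noise sensitive $\Rightarrow$ Liouville'' (in fact for partial entropy noise sensitivity), which is Theorem~\ref{prop:enstoLiouville}; your citation of that result for the forward implication matches the paper exactly. The converse, ``Liouville $\Rightarrow$ entropy noise sensitive,'' is exactly what the authors leave open; their supporting evidence (Proposition~\ref{prop:abelian} for abelian groups, Theorem~\ref{thm:lamplighter} for the lamplighter, the partial result of Theorem~\ref{thm:dirgps}) is obtained by bespoke arguments about ranges, local times and inverted orbits, not by any general information-theoretic principle. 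So your reduction of the problem to showing $I(X_n;Y_n^\r)=o(\H(X_n))$ under vanishing asymptotic entropy is a correct and useful reformulation, but the proposal does not close the converse, and hence does not prove the statement --- unavoidably, since no proof exists.

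Beyond the unavoidable gap, one step of your proposed route would actually fail as stated. The ``hybrid'' inequality $I(X_n;Y_n^\r)\le C(\r)\sum_i I(X_n;s_i)$ is false: joint information carried by the retained coordinates is invisible to the single-coordinate informations. Take $G=\Z/2\Z$, $\mu$ uniform, $n=2$. Then $X_2=s_1+s_2$ is a fair bit and $I(X_2;s_1)=I(X_2;s_2)=0$, so the right-hand side vanishes; but $X_2+Y_2^\r$ is independent of $X_2$ and takes the value $0$ with probability $(1-\r/2)^2+(\r/2)^2>\tfrac12$, so $I(X_2;Y_2^\r)=1-\H(X_2+Y_2^\r)>0$ for every $\r\in(0,1)$. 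This is the parity obstruction, and it is pervasive in group products (any finite quotient reproduces it); it is precisely why finite groups are handled in the paper by equidistribution (Proposition~\ref{prop:finite}) rather than by anything resembling influences. Note also that your second ingredient, $\sum_i I(X_n;s_i)=o(\H(X_n))$, is itself not known to follow from the Liouville property: the data-processing bound $I(X_n;s_i)\le\min\{\H(X_i)-\H(X_{i-1}),\H(X_{n-i+1})-\H(X_{n-i})\}$ only gives $\sum_i I(X_n;s_i)\lesssim\H(X_n)$, which is the trivial scale. Your closing diagnosis --- that a genuinely new hypercontractive or boundary-theoretic tool, or a class-by-class treatment, is required --- is accurate and is consistent with the authors stating this as a conjecture.
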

In particular, we believe that $\ell^1$-noise sensitivity implies entropy noise sensitivity -- see Section~\ref{sec:relationship}.

Physically, an $\ell^1$-noise sensitive process can somewhat not be observed, since the observation $Y_n^\r$ does not provide any significant information on the actual output $X_n$. Speculatively, this could account for the rarity of Liouville (probability) groups in natural science. Indeed besides virtually nilpotent ones, all known Liouville groups are genuinely mathematical objects, introduced by mathematicians to solve their problems, without reference to physical sciences \cite{Aleshin1972,KV, Grigorchuk85,Grigorchuk1986,AAV,Brieussel2013,MB,Nekrashevych}.

{\bf Organization of the paper.} Precise definitions of noise sensitivity are given in Section \ref{sec:def} and some of their relationships are studied in Section \ref{sec:relationship}. The effects of Liouville property on noise sensitivity are discussed in Section \ref{sec:Liouville}. Section \ref{sec:fab} is devoted to finite and abelian groups, Section \ref{sec:dih} to the infinite dihedral group. Wreath products are studied in Section \ref{sec:wr}. Some perspectives and open questions are presented in the final Section \ref{sec:qu}.

\section{Notions of noise sensitivity}\label{sec:def}

Let $G$ be a countable group. We endow it with a probability measure $\mu$ whose support generates $G$ as a semi-group. The random walk $(G,\mu)$ is the sequence of random variables $X_n=s_1\dots s_n$ where $(s_k)_{k\ge 1}$ are independent of law $\mu$. The law of $X_n$ is the $n$-fold convolution $\mu_n:=\mu^{\ast n}$.

Given $X_n$ and a \emph{noise parameter} $\r\in[0,1]$, we consider another random variable where the increments $s_k$ are \emph{refreshed} (i.e. resampled independently according to $\mu$) with probability $\r$. More precisely, we define $Y_n^\r=r_1\dots r_n$ where
\[
r_k=\left\{\begin{array}{ll} s_k & \textrm{with probability }1-\r, \\ s'_k & \textrm{with probability }\r, \end{array} \right.
\]
where $s_k'$ is an independent random variable of law $\mu$. We interpret $Y_n^\r$ as a version of $X_n$ perturbed by some noise. Of course the law of $Y_n^\r$ alone is the same as that of $X_n$. 

We denote by $\pi_n^\r$ the joint law of $(X_n,Y_n^\r)$ on $G\times G$. It is the distribution at time $n$ of the random walk on $G\times G$ with measure $\pi^\r:=(1-\r)\mu^{\mathrm{diag}}+\r \mu^2$ where $\mu^{\mathrm{diag}}$ is the diagonal measure on the product taking values $\mu^{\mathrm{diag}}(x,y)=\mu(x)$ when $x=y$ and $\mu^{\mathrm{diag}}(x,y)=0$ otherwise, and $\mu^2:=\mu\times \mu$. In particular, $\pi_n^\r$ is symmetric in the two variables, $\pi_n^1=\mu^2_n$ is the law of a $\mu^2$-random walk on $G\times  G$ at time $n$ and $\pi_n^0=\mu^{\mathrm{diag}}_n$ is the original random walk embedded diagonally in $G \times G$.

In this paper, we investigate the notion of \emph{noise sensitivity} of a random walk, that is how much $Y_n^\r$ for $\r\in(0,1)$ can differ from $X_n$. Informally, we say that $Y_n^\r$ is \emph{noise sensitive} if the couple  $(X_n,Y_n^\r)$  resembles a couple $(X_n,X_n')$ of two independent samples of the random walk, i.e. if the two probability measures $\pi_n^\r$ and $\mu_n^2$ are close. We present several precise quantitative ways of measuring noise sensitivity.

\subsection{Measure-theoretic notions of noise sensitivity} 
Let us start with the most natural notion.

\begin{definition}{\bf $\ell^1$-noise sensitivity.}
The random walk $(G,\mu)$ is \emph{$\ell^1$-noise sensitive} if 
\[
\forall \r\in(0,1),\quad \|\pi_n^\r-\mu_n^2\|_{1}\underset{n\to \infty}{\longrightarrow} 0.
\]
\end{definition}

We recall that the $\ell^1$-distance (or twice total variation) between two probability measures $\xi_1,\xi_2$ on a countable space $E$ is
\[
\|\xi_1-\xi_2\|_{1}:=\sum_{x\in E} |\xi_1(x)-\xi_2(x)| \in [0,2].
\]
The $\ell^1$-distance is also characterised in terms of coupling:
\[
\|\xi_1-\xi_2\|_{1}=\inf_{\n \in \Coup(\xi_1,\xi_2)} \n(x \neq y),
\]
where a coupling $\n$ of $\xi_1$ and $\xi_2$ is a probability measure on $E\times E$ whose marginals are $\xi_1$ and~$\xi_2$. 

Let us now define a weaker notion of noise sensitivity, related to entropy.
Recall that the Shannon entropy of a random variable $X$ of law $\xi$ taking values in a countable set $E$ is 
\[
\H(X)=\H(\xi):=-\sum_{x\in E} \xi(x)\log(\xi(x))=\E_{\xi}I_\xi(x),
\]
where $I_\xi(x)=-\log(\xi(x))$ is the information function. Informally, the entropy of $X$ is the average number of digits needed to describe the value of $X$. Moreover the conditional entropy of another random variable $Y$ with respect to $X$ is
\[
\H(Y|X)=\E_X \H_X(Y)=\sum_{x \in X(E)} \P(X=x)\left( -\sum_{y \in E}\P(Y=y|X=x)\log\P(Y=y|X=x)\right)
\]
i.e. the expectation with respect to $X$ of the entropy of the law of $Y$ conditioned by the value of $X$. Informally this is the average amount of information needed to describe $Y$ when we already know~$X$.

\begin{definition}{\bf Entropy noise sensitivity.}
The random walk $(G,\mu)$ is \emph{entropy noise sensitive}~if
\[
\forall \r\in(0,1), \quad \frac{\H(Y_n^\r|X_n)}{\H(X_n)}\underset{n\to \infty}{\longrightarrow} 1.
\]
The random walk is \emph{partially entropy noise sensitive} if
\[
\exists c>0, \forall \r\in(0,1), \exists N, \forall n \ge N, \quad \H(Y_n^\r|X_n) \ge c\H(X_n).
\]
\end{definition}

Informally speaking, partial entropy noise sensitivity ensures that no matter how small the noise parameter, there is always a fixed proportion of information that cannot be recovered from the noised sample.

It is well known that for any two random variables $\H\left((X,Y)\right)=\H(Y|X)+\H(X)$. In the present context, we get
$\H(Y_n^\r|X_n)=\H(\pi_n^\r)-\H(\mu_n)$.

\subsection{Metric notions of noise sensitivity}\label{sec:met}

As the notion of $\ell^1$-convergence of measures is strong, it can be interesting to relax it. We propose here other notions of noise sensitivity, related to metrics on the group, and motivated by the usual interest in geometric group theory for the large scale aspects of the metric. They can be omitted in first reading. 

Given a metric $d$ on a space $E$, we may define other notions of convergence of measures, for instance via the Wasserstein distances, defined for $p \in [1,\infty)$ by:
\[
W_p(\xi_1,\xi_2):=\inf_{\n \in \Coup(\xi_1,\xi_2)} \E_\n [d(x,y)^p]^{\frac{1}{p}}.
\]
In practice, we will rather use a related notion based on the following quantity: given $s>0$ set
\[
U^s(\xi_1,\xi_2):=\inf_{\n \in \Coup(\xi_1,\xi_2)}\n\left(d(x,y)\ge s\right).
\]
By Markov inequality, we have:
\begin{align}\label{MarkovU}
U^s(\xi_1,\xi_2)^{\frac{1}{p}}\leq \frac{W_p(\xi_1,\xi_2)}{s}
\end{align}

\begin{definition}{\bf Noise sensitivity at scale $s_n$.}
Given a left-invariant distance on $G\times G$ and a sequence $(s_n)$, the random walk $(G,\mu)$ is \emph{noise sensitive at scale $s_n$} if 
\[
\forall \r \in (0,1), \quad U^{s_n}(\pi_n^\r,\mu_n^2) \underset{n\to \infty}{\longrightarrow} 0.
\]
\end{definition}
From a mass transport point of view, this means that for  $n$ large, all but an $\e$ proportion of the ``sand pile" mass distribution of $\mu^2_n$ can be obtained from $\pi^\r_n$, moving sand by distance less than $s_n$.

Unless mentionned otherwise, we assume the distance on $G\times G$ is a word distance, e.g. $d_{G\times G}((x,y),(x',y'))=d_G(x,x')+d_G(y,y')$ where $d_G$ is a word distance on $G$.
We recall that all word distances on a finitely generated group are equivalent up to multiplicative constants.

We point out:
\begin{itemize}
\item Noise sensitivity at scale $1$ is equivalent to $\ell^1$-noise sensitivity when the distance takes integer values, e.g. in discrete groups with word metric.
\item If a Wasserstein distance satisfies $W_p(\pi_n^\r,\mu^2_n) = o(s_n)$, then $(G,\mu)$ is noise sensitive at scale~$s_n$ by (\ref{MarkovU}).
\item If $s'_n\ge s_n$ and $(G,\mu)$ is noise sensitive at scale $s_n$, then it is noise sensitive at scale $s'_n$, because $U^{s'}(\xi_1,\xi_2)\le U^s(\xi_1,\xi_2)$ whenever $s'\ge s$.
\item The quantity $U^s$ resembles a distance in the sense that
\begin{align}\label{Us}
U^s(\xi_1,\xi_3) \le U^{s_1}(\xi_1,\xi_2)+U^{s_2}(\xi_2,\xi_3) \quad \textrm{whenever } s_1+s_2=s.
\end{align}
\end{itemize}
The last point is proved along the same lines that $W_1$ is a distance. Namely let $\nu_{12}$ and $\nu_{23}$ be appropriate couplings to get $U^{s_1}(\xi_1,\xi_2)$ and $U^{s_2}(\xi_2,\xi_3)$. There exists $\g$ a probability on $E^3$ whose projections satisfy $p_{12}\g=\n_{12}$ and $p_{23}\g=\n_{23}$. The coupling $\n_{13}:=p_{13}\gamma$ gives (\ref{Us}).

\begin{definition}{\bf Noise sensitivity at large scale}
The random walk $(G,\mu)$ is {\it noise sensitive at large scale} if there exists a sequence such that $s_n=o(\E d_G(X_n,X'_n))$ and $(G,\mu)$ is noise sensitive at scale $s_n$.
\end{definition}

The {\it spread} $\E d_G(X_n,X'_n)$ of the random walk can be considered as the natural scale to describe the distribution $\mu_n$. Noise sensitivity at large scale essentially means that most refreshed samples $Y_n^\r$ look independent of $X_n$ at the spread scale. For instance, the central limit theorem for virtually abelian groups provides a gaussian description of the distribution of $\mu_n$ at the spread scale $\sqrt{n}$. For such a group, noise sensitivity at large scale implies that the limit gaussian distribution of $(X_n,Y_n^\r)$ is the same as that of $(X_n,X_n')$.

\begin{definition}{\bf  Noise sensitivity in average distance.} Given a left-invariant distance $d_G$ on $G$  (e.g. a word distance when $G$ is finitely generated), the random walk $(G,\mu)$ is \emph{noise sensitive in average distance} if 
\[
\forall \r\in(0,1), \quad \liminf_{n\to \infty}\frac{\E d_G(X_n,Y_n^\r)}{\E d_G(X_n,X'_n)} \ge 1.
\]
The random walk $(G,\mu)$ is \emph{partially noise sensitive in average distance } if \[ \exists c>0,\forall \r\in(0,1), \exists N, \forall n \ge N, \quad \E d_G(X_n,Y_n^\r) \ge c \E d_G(X_n,X'_n).\]
\end{definition} 

\section{Relationships between notions of noise sensitivity}\label{sec:relationship}

The $\ell^1$ noise sensitivity seems to be the strongest among the notions defined above. It implies noise sensitivity at any (in particular at large) scale. 

\subsection{$\ell^1$-noise sensitivity and entropy noise sensitivity}
We believe that $\ell^1$-noise sensitivity implies entropy noise sensitivity, which would be a consequence of Conjecture~\ref{conj:entropy}. We prove it under an additional assumption of homogeneity.

\begin{definition}\label{def:ell1toentropy}
A sequence $(\xi_n)$ of probability measures on $G$ has \emph{homogeneous entropy} if
\[
\mathrm{h}_n(\e):=\sup\left\{\frac{1}{\H(\xi_n)}\sum_{x\in A}\xi_n(x)\log(\xi_n(x))\middle| A\subset G, \xi_n(A) \leq \e \right\} \underset{\e \to 0}{\longrightarrow} 0 
\] 
uniformly in $n$.
\end{definition}
Informally, this means that the smallest atoms do not contribute much to the entropy. 

\begin{proposition}\label{prop:ell1toentropy}
If a random walk $(G,\mu)$ is $\ell^1$-noise sensitive and if the sequence of measures $(\pi_n^\r)_n$ has homogeneous entropy for each $\r \in (0,1]$, then $(G,\mu)$ is entropy noise sensitive.
\end{proposition}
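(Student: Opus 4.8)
The plan is to reduce the statement to a general continuity fact for entropies of $\ell^1$-close measures, which then follows from the homogeneity hypothesis used as a form of uniform integrability of the information function. First I would rewrite the target ratio. By the chain rule $\H(Y_n^\r\,|\,X_n)=\H(\pi_n^\r)-\H(\mu_n)$ recalled in the text, and since $\H(X_n)=\H(\mu_n)$ and $\H(\mu_n^2)=2\H(\mu_n)$, entropy noise sensitivity
\[
\frac{\H(Y_n^\r|X_n)}{\H(X_n)}=\frac{\H(\pi_n^\r)}{\H(\mu_n)}-1\longrightarrow 1
\]
is equivalent to $\H(\pi_n^\r)/\H(\mu_n^2)\to 1$. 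Writing $P_n:=\pi_n^\r$ and $Q_n:=\mu_n^2$, I note that $\ell^1$-noise sensitivity gives $\d_n:=\|P_n-Q_n\|_1\to 0$, that $Q_n=\pi_n^1$ so that both $(P_n)$ and $(Q_n)$ have homogeneous entropy (the hypothesis covers $\r\in(0,1]$), and that $\H(Q_n)=2\H(\mu_n)\to\infty$ for infinite $G$ (the finite case being Proposition~\ref{prop:finite}). Thus it suffices to prove: if $\|P_n-Q_n\|_1\to 0$, both sequences have homogeneous entropy and $\H(Q_n)\to\infty$, then $\H(P_n)/\H(Q_n)\to 1$. I denote by $\mathrm{h}_n^{P}(\e)$ and $\mathrm{h}_n^{Q}(\e)$ the maximal fraction of entropy carried by a set of mass at most $\e$ under $P_n$, resp.\ $Q_n$; homogeneity means each tends to $0$ as $\e\to 0$, uniformly in $n$.

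For the core estimate I would bound $|\H(P_n)-\H(Q_n)|\le\sum_x|\phi(P_n(x))-\phi(Q_n(x))|$ with $\phi(t)=-t\log t$, and split the atoms into a \emph{far} part $F_n=\{x:\ P_n(x)\notin[\tfrac12 Q_n(x),2Q_n(x)]\}$ and a \emph{close} part. On $F_n$ one has $|P_n-Q_n|\ge\tfrac12\max(P_n,Q_n)$, hence $\max(P_n(F_n),Q_n(F_n))\le 2\d_n$: the far atoms form a set of small mass under both measures. Using the crude bound $|\phi(P_n)-\phi(Q_n)|\le\phi(P_n)+\phi(Q_n)$ there, homogeneity gives $\sum_{F_n}(\phi(P_n)+\phi(Q_n))\le\mathrm{h}_n^{P}(2\d_n)\H(P_n)+\mathrm{h}_n^{Q}(2\d_n)\H(Q_n)$, a vanishing fraction of the entropies.

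On the close atoms I would use the mean value theorem: since $\min(P_n,Q_n)\ge\tfrac12 Q_n$ there, $|\phi(P_n)-\phi(Q_n)|\le|P_n-Q_n|\,(-\log Q_n+O(1))$, and because $|P_n-Q_n|\le Q_n$ on close atoms the contribution is at most $\sum|P_n-Q_n|\,I_{Q_n}+O(\d_n)$. To see this is $o(\H(Q_n))$, cut at the level $\lambda_n\H(Q_n)$ of the information function: where $I_{Q_n}\le\lambda_n\H(Q_n)$ the contribution is at most $\lambda_n\H(Q_n)\d_n$, while the set $\{I_{Q_n}>\lambda_n\H(Q_n)\}$ has $Q_n$-mass at most $1/\lambda_n$ by Markov's inequality applied to $\E_{Q_n}I_{Q_n}=\H(Q_n)$, so (using again $|P_n-Q_n|\le Q_n$) its contribution is at most $\mathrm{h}_n^{Q}(1/\lambda_n)\H(Q_n)$. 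Choosing $\lambda_n\to\infty$ with $\lambda_n\d_n\to 0$ makes both terms $o(\H(Q_n))$. Collecting the far and close estimates yields $|\H(P_n)-\H(Q_n)|\le\e_n\,(\H(P_n)+\H(Q_n))$ with $\e_n\to 0$, from which $\H(P_n)/\H(Q_n)\to 1$ follows by solving the resulting inequality for the ratio.

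The main obstacle is exactly that Shannon entropy is not continuous for the $\ell^1$-distance on an infinite group: a perturbation of vanishing $\ell^1$-mass can change the entropy arbitrarily if it hides mass in a long tail of atypically small atoms. The homogeneity hypothesis is precisely what rules this out, and the heart of the argument is the bookkeeping above that confines the uncontrolled behaviour to the far atoms and to the super-level set $\{I_{Q_n}>\lambda_n\H(Q_n)\}$, both of small mass and hence tamed by homogeneity. The only mild point to check separately is that $\H(\mu_n)\to\infty$, which holds for any infinite finitely generated group since $\H(\mu_n)\ge-\log\|\mu_n\|_\infty\to\infty$.
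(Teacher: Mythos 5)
Your core argument is sound, but it follows a genuinely different route from the paper's. After the same reduction (via $\H(Y_n^\r|X_n)=\H(\pi_n^\r)-\H(\mu_n)$ and $\H(\mu_n^2)=2\H(\mu_n)$, plus the observation that the hypothesis at $\r=1$ makes $(\mu_n^2)_n$ homogeneous), the paper works with the coupling characterization of the $\ell^1$-distance: it takes couplings $\nu_n\in\Coup(\pi_n^\r,\mu_n^2)$ with $\nu_n(x\neq y)\to0$, restricts to the sets $A_\e^i=\{(1-\e)\xi_n^i(x)\le\nu_n(x,x)\le\xi_n^i(x)\}$ of nearly full mass, and compares both entropies to the single quantity $-\sum_{x\in A_\e^1\cap A_\e^2}\nu_n(x,x)\log\nu_n(x,x)$, homogeneity absorbing the complements and a modulus-of-continuity factor absorbing the multiplicative discrepancy. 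You never introduce a coupling: you compare the two measures atom by atom, splitting into far atoms (where $P_n(x)\notin[\tfrac12Q_n(x),2Q_n(x)]$, whose mass under both measures is at most $2\d_n$, hence negligible by homogeneity) and close atoms, where the mean value theorem reduces everything to $\sum|P_n-Q_n|I_{Q_n}$, which you control by cutting the information function at level $\lambda_n\H(Q_n)$, with Markov's inequality and homogeneity handling the super-level set. Both proofs rest on the same phenomenon --- multiplicative comparability of the atoms on a set of nearly full mass, with homogeneity playing the role of uniform integrability of the information function --- but yours is coupling-free and its error terms ($\mathrm{h}_n^{P}(2\d_n)$, $\mathrm{h}_n^{Q}(2\d_n)$, $\lambda_n\d_n$, $\mathrm{h}_n^{Q}(1/\lambda_n)$) are explicit, which makes the bookkeeping easier to verify than the paper's $(1+\psi_n^i(\e))$ factor.

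One point does need repair: your closing claim that $\H(\mu_n)\to\infty$ for every generating measure on an infinite finitely generated group, justified by $\|\mu_n\|_\infty\to0$. That decay of concentration functions is not a one-line fact, and it fails if generation is understood in the group sense: on $G=\Z\times\Z/2\Z$ with $\mu$ uniform on $\{(1,0),(1,1)\}$, whose support generates $G$ as a group, one has $\|\mu_n\|_\infty=\tfrac12$ and $\H(\mu_n)=\log 2$ for every $n$. This $(G,\mu)$ is even $\ell^1$-noise sensitive with homogeneous entropy, so your reduction, as stated, excludes legitimate instances of the proposition. Fortunately your proof never uses $\H(Q_n)\to\infty$: the only error term that is absolute rather than a fraction of the entropies is the $O(\d_n)$ coming from the mean value theorem, and for that it suffices that $\H(Q_n)=2\H(\mu_n)$ stay bounded away from $0$. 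This is elementary: $n\mapsto\H(\mu_n)$ is non-decreasing, since $\H(X_{n+1})\ge\H(X_{n+1}\mid s_{n+1})=\H(X_n)$ by independence of $X_n$ and $s_{n+1}$, so $\H(\mu_n)\ge\H(\mu)>0$ unless $\mu$ is a Dirac mass, a degenerate case in which all entropies vanish and the statement is empty. With ``bounded away from $0$'' in place of ``$\to\infty$'', your argument is complete.
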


\begin{proof} The case of finite groups is trivial by Proposition \ref{prop:finite}. We assume $G$ is infinite.
Let $\nu_n$ be a coupling between two sequences $(\xi_n^1)$ and $(\xi_n^2)$ with homogeneous entropy such that $\nu_n(x_1\neq x_2) \to 0$. Then for small $\e>0$ and each $i=1,2$, denote $A_\e^i:=\{(1-\e)\xi_n^i(x)\le \nu_n(x,x) \le \xi_n^i(x) \}$. For $n$ large enough, $\xi_n^i(A_\e^i)\ge 1-\e$, thus $\nu_n\{(x,x):x\in A_{\e}^i\}\ge (1-\e)^2\ge 1-2\e$ and by taking marginal $\xi^{3-i}_n(A_\e^i)\ge 1-2\e$. It follows that $\xi^i_n(A_\e^1\cap A_\e^2) \ge 1-3\e$.

As the modulus of continuity of the function $x\log(x)$ is $\omega(\e)=\e |\log \e|$, one has for all $x,y \in (0,1]$
\begin{align*}
\left| \frac{x}{y}-1\right| \le \e \iff |x-y| \le \e y &\implies |x \log x - y \log y| \le \omega(\e y) \le \e y \left| \log \e + \log y \right| \\
& \implies 
 \left| \frac{x \log x}{y \log y}-1 \right| \le \frac{\e |\log \e|}{|\log y|}+\e
\end{align*}
Using the homogeneity assumption:
\begin{align*}
\H(\xi_n^i) &= -\sum_{x} \xi_n^i(x)\log(\xi_n^i(x))=-\sum_{x\in A_\e^1\cap A_\e^2} \xi_n^i(x)\log(\xi_n^i(x)) +\mathrm{h}^i_n(3\e)\H(\xi_n^i) \\
&= - \sum_{x\in A_\e^1\cap A_\e^2} \n_n(x,x)\log(\nu_n(x,x)) \frac{\xi_n^i(x)\log(\xi_n^i(x))}{\n_n(x,x)\log(\nu_n(x,x))} +\mathrm{h}^i_n(3\e)\H(\xi_n^i) \\
&= -(1+\psi^i_n(\e))\sum_{x\in A_\e^1\cap A_\e^2} \n_n(x,x)\log(\nu_n(x,x)) +\mathrm{h}^i_n(3\e)\H(\xi_n^i),
\end{align*}
where $|\psi_n^i(\e)| \le \frac{\e |\log \e|}{\log 2}+\e \underset{\e \to 0}{\longrightarrow}0$ provided all the atoms of $\n_n$ have mass at most $\frac{1}{2}$. We apply these equalities for $\xi_n^1=\pi_n^\r$ and $\xi_n^2=\mu_n^2=\pi_n^1$ (the condition on the atoms of $\n_n$ is satisfied for $n$ large because $G$ is infinite). We get that $\sum_{x\in A_\e^1\cap A_\e^2} \n_n(x,x)\log(\nu_n(x,x))=\H(\pi_n^\r)+o(\H(\pi_n^\r))=\H(\mu_n^2)+o(\H(\mu_n^2))$.
It follows that $\frac{\H(\pi_n^\r)}{\H(\mu_n^2)}\to 1$, which is equivalent to entropy noise sensitivity.
\end{proof}

\begin{remark} For random walks on groups, homogeneous entropy, as well as homogeneous spread (see Definition~\ref{def:ell1todist} below), are related to the tail decay of the distribution $\mu_n$. They are not easy to check on a given group, for it is usually hard to compute entropy or spread in the first place. However they are satisfied for random walks on virtually abelian groups by gaussian decay. They are also satisfied by symmetric random walks on the lamplighter group $\Z/2\Z \wr \Z$, because of the gaussian decay of the range of a random walk on $\Z$ -- see formula (\ref{eq:entropyrange}) in the proof of Theorem~\ref{thm:lamplighter}. Such argument on the range can probably be adapted to the case of iterated lamplighter groups, and possibly to the case of diagonal products of lamplighter groups as in \cite{BZ}. However detailed computations are out of the scope of the present article.
Non-Liouville random walks also have homogeneous entropy and spread by Shannon's theorems, see e.g. \cite{KV}.
We do not know if there exist random walks on groups with non-homogeneous entropy or non-homogeneous spread.
\end{remark}

\subsection{Noise sensitivity at large scale and noise sensitivity in average distance}

We also believe that if $(G,\mu)$ is noise sensitive at large scale, then it is noise sensitive in average (word) distance. Remark~\ref{rem:F} shows that the converse is not true. Again, we prove this under an additional assumption.

\begin{definition}\label{def:ell1todist}
We say the random walk $(G,\mu)$ has \emph{homogeneous spread} if
\[
\mathrm{f}_n(\e):=\sup\left\{\frac{1}{\E d_G(X_n,X'_n)}\sum_{(x,y)\in A}d_G(x,y)\mu^2_n(x,y)\middle| A\subset G\times G, \mu^2_n(A) \leq \e \right\} \underset{\e \to 0}{\longrightarrow} 0 
\]
uniformly in $n$.
\end{definition}
Informally, this means that far away points do not contribute to the spread. 

\begin{proposition}\label{prop:ell1todist}
If a random walk $(G,\mu)$ is noise sensitive at large scale and has homogeneous spread, then it is noise sensitive in average word distance.
\end{proposition}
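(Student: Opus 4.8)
The plan is to bound $\E d_G(X_n, Y_n^\rho)$ from below by transporting mass from the comparison distribution $\mu_n^2$ along the near-optimal coupling supplied by noise sensitivity at large scale, and to control the resulting error terms using homogeneity of the spread. Fix $\rho\in(0,1)$ and let $s_n=o(\E d_G(X_n,X'_n))$ be a scale witnessing noise sensitivity at large scale, so that $U^{s_n}(\pi_n^\rho,\mu_n^2)\to 0$. The key observation is that the function $\phi(x,y):=d_G(x,y)$ on $G\times G$ is $1$-Lipschitz for the product word distance $d_{G\times G}((x,y),(x',y'))=d_G(x,x')+d_G(y,y')$, directly from the triangle inequality in $G$. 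Since $\E_{\pi_n^\rho}[\phi]=\E d_G(X_n,Y_n^\rho)$ and $\E_{\mu_n^2}[\phi]=\E d_G(X_n,X'_n)$, the whole statement reduces to a $1$-Lipschitz comparison of these two expectations.

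First I would pick, for each $n$, a coupling $\nu_n$ on $(G\times G)\times(G\times G)$ of $\pi_n^\rho$ and $\mu_n^2$ that is near-optimal for $U^{s_n}$: writing $P$ for the $\pi_n^\rho$-coordinate and $Q$ for the $\mu_n^2$-coordinate, the bad event $B_n:=\{d_{G\times G}(P,Q)\ge s_n\}$ then has $\e_n:=\nu_n(B_n)\to 0$. On $B_n^c$, $1$-Lipschitzness gives $\phi(P)\ge\phi(Q)-s_n$; discarding the nonnegative contribution of $\phi(P)$ on $B_n$ yields
\[
\E d_G(X_n,Y_n^\rho)=\E_{\nu_n}[\phi(P)]\ge\E_{\nu_n}[\phi(Q)\mathbbm{1}_{B_n^c}]-s_n=\E d_G(X_n,X'_n)-\E_{\nu_n}[\phi(Q)\mathbbm{1}_{B_n}]-s_n.
\]
It thus remains to show the truncated expectation $\E_{\nu_n}[\phi(Q)\mathbbm{1}_{B_n}]$ is $o(\E d_G(X_n,X'_n))$; dividing by $\E d_G(X_n,X'_n)$ and using $s_n=o(\E d_G(X_n,X'_n))$ then gives $\liminf \E d_G(X_n,Y_n^\rho)/\E d_G(X_n,X'_n)\ge 1$.

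The heart of the argument, and the main obstacle, is bounding $\E_{\nu_n}[\phi(Q)\mathbbm{1}_{B_n}]$: the bad event has small $\nu_n$-probability, but $\phi(Q)$ can be large on it, so smallness of probability alone does not suffice — this is precisely the scenario that homogeneous spread is designed to exclude. Setting $\beta(q):=\nu_n(B_n\mid Q=q)$, we have $\E_{\mu_n^2}[\beta]=\e_n$ and $\E_{\nu_n}[\phi(Q)\mathbbm{1}_{B_n}]=\sum_q\mu_n^2(q)\phi(q)\beta(q)$. I would split along the threshold $t:=\sqrt{\e_n}$. On $A:=\{q:\beta(q)>t\}$, Markov's inequality gives $\mu_n^2(A)\le\e_n/t=\sqrt{\e_n}$, so homogeneous spread bounds that part by $\mathrm{f}_n(\sqrt{\e_n})\,\E d_G(X_n,X'_n)$; on $A^c$ we use $\beta(q)\le t$ to bound the part by $\sqrt{\e_n}\,\E d_G(X_n,X'_n)$.

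Finally, since $\e_n\to 0$, the uniform-in-$n$ convergence $\mathrm{f}_n(\e)\to 0$ as $\e\to 0$ forces $\mathrm{f}_n(\sqrt{\e_n})\to 0$, and $\sqrt{\e_n}\to 0$, so indeed $\E_{\nu_n}[\phi(Q)\mathbbm{1}_{B_n}]=o(\E d_G(X_n,X'_n))$. Combined with the displayed inequality and $s_n=o(\E d_G(X_n,X'_n))$, this completes the proof. The only genuinely delicate step is the two-regime split, which converts ``small transport probability'' into ``small $\mu_n^2$-mass on a restricted set'' so that the homogeneity hypothesis applies; everything else is the triangle inequality and routine bookkeeping.
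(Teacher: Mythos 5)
Your proof is correct and takes essentially the same route as the paper's: both pick a near-optimal coupling $\nu_n$ realizing $U^{s_n}(\pi_n^\rho,\mu_n^2)\to 0$, use the triangle inequality (Lipschitzness of $d_G$) to pass from the $\pi_n^\rho$-side to the $\mu_n^2$-side up to an $O(s_n)$ error on the good event, apply a Markov-inequality step to convert the small coupling probability of the bad event into a small-$\mu_n^2$-mass exceptional set, and invoke homogeneous spread to show that set contributes negligibly to $\E d_G(X_n,X_n')$. The only difference is bookkeeping: the paper fixes $\varepsilon>0$ and extracts a set $B_n$ of mass $\ge 1-\varepsilon$ on which the conditional good mass is $\ge(1-\varepsilon)\mu_n^2$, whereas you split at the $n$-dependent threshold $\sqrt{\varepsilon_n}$ --- the same argument in a slightly different arrangement.
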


\begin{proof}
There exists a sequence $s_n=o(\E d_G(X_n,X'_n))$ such that $U^{s_n}(\pi^\r_n,\mu^2_n) \to 0$. So there exists a probability $\nu_n\in \Coup(\pi^\r_n,\mu^2_n)$ such that $\n_n(A_n) \to 1$ where $A_n$ is the event $d_{G^2}((x,y),(x',y'))\le s_n$. We compute
\begin{align*}
\E d_G(X_n,Y^\r_n) &=\sum_{G^2}\pi^\r_n(x,y)d_G(x,y)=\sum_{G^4} \n_n(x,y,x',y')d_G(x,y) \\ 
& \ge \sum_{A_n} \n_n(x,y,x',y')d_G(x',y') -2s_n \quad \textrm{ by triangle inequality,}\\
& = \sum_{G^2} \n_n\left( (G^2 \times \{(x',y')\} )\cap A_n\right)d_G(x',y') -2s_n.
\end{align*}
Given $\e>0$, for $n$ large enough there exist $B_n \subset G^2$ such that $\mu^2_n(B_n) \ge 1-\e$ and 
\[
\forall (x',y') \in B_n, \quad \n_n\left((G^2\times\{(x',y')\})\cap A_n \right) \geq (1-\e)\mu^2_n(x',y').
\]
Then
\begin{align*}
\E d_G(X_n,Y^\r_n) & \ge (1-\e) \sum_{B_n} \mu^2_n(x',y')d_G(x',y') -2s_n \\
&\ge (1-\e)(1-\mathrm{f}_n(\e))\E_{\mu^2_n} d_G(x',y') -2s_n.
\end{align*}
Noise sensitivity in average distance now follows from the homogeneity assumption.
\end{proof}

\section{Noise sensitivity and Liouville property}\label{sec:Liouville}

A function on  a group $G$ is $\mu$-harmonic if $f(y)=\sum_{x \in G} \mu(x)f(yx)$ for all $y$.
A random walk $(G,\mu)$ is Liouville if there are no non-constant bounded $\mu$-harmonic functions. This is equivalent to the fact that the Poisson boundary is reduced to a point \cite{Furstenberg1971} and also equivalent to the fact that the entropy $\H(X_n)$ of the random walk is sublinear \cite{KV}.

\begin{theorem}\label{thm:ell1Liouville}
If $(G,\mu)$ is $\ell^1$-noise sensitive, then $(G,\mu)$ is Liouville.
\end{theorem}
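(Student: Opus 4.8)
The plan is to prove the contrapositive: if $(G,\mu)$ is \emph{not} Liouville, then it is not $\ell^1$-noise sensitive. The conceptual content, as the authors hint in the introduction, is that in the non-Liouville case the first increment $s_1$ (more generally, the first few increments) already carries a nontrivial, \emph{non-vanishing} amount of information about the asymptotic position of $X_n$, encoded in the Poisson boundary. Noising resamples $s_1$ with probability $\r$, so $Y_n^\r$ loses this correlation on an event of probability $\r > 0$; this should force $\pi_n^\r$ and $\mu_n^2$ to stay a bounded distance apart in $\ell^1$, uniformly in $n$.

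To make this quantitative I would proceed as follows. First, fix the characterization of non-Liouville via a non-constant bounded harmonic function $h$, equivalently a boundary-measurable function on the Poisson boundary. Since $(G,\mu)$ is not Liouville, there is a bounded function $f$ on $G$ and a value discriminated by the first step: concretely, one can find a generator $g \in \supp(\mu)$ and a bounded $\mu$-harmonic $h$ such that $\E[h(X_n) \mid s_1 = g]$ differs, in the limit, from the unconditioned $\E[h(X_n)] = h(e)$. (Such a $g$ must exist, for otherwise $h$ would be constant by harmonicity and the generating assumption.) The key step is then to build a bounded \emph{test function} $F$ on $G \times G$ that separates the two laws. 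A natural choice is $F(x,y) = h_\infty(x)\,h_\infty(y)$ or, to stay with honest finite-$n$ quantities, $F_n(x,y) = \tilde h(x)\tilde h(y)$ for a suitable truncation. Under $\mu_n^2$ the two coordinates are independent, so $\E_{\mu_n^2}[F] \to h(e)^2$; under $\pi_n^\r$ the two coordinates share the unrefreshed increments (with probability $1-\r$ the first increment is shared), producing a positive covariance that does not vanish.

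More precisely, I would condition on the refreshing pattern of the first coordinate. Write $Y_n^\r = r_1 s_2 \cdots s_n$ where $r_1 = s_1$ with probability $1-\r$. On the event that $s_1$ is \emph{not} refreshed, $X_n$ and $Y_n^\r$ share $s_1$, and the boundary behaviour of each is biased by the common value of $s_1$ in a correlated way; the martingale $h(X_n)$ converges and its conditional-on-$s_1$ limits are genuinely spread out (this is exactly the failure of the tail triviality that non-Liouville encodes, via Kaimanovich--Vershik \cite{KV}). Quantifying, one gets $\E_{\pi_n^\r}[F_n] - \E_{\mu_n^2}[F_n] \ge (1-\r)\,\delta$ for some $\delta > 0$ depending only on $h$ and not on $n$. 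Since $\|F_n\|_\infty$ is bounded, the duality $\|\pi_n^\r - \mu_n^2\|_1 \ge \frac{1}{\|F_n\|_\infty}\bigl|\E_{\pi_n^\r}[F_n] - \E_{\mu_n^2}[F_n]\bigr|$ yields a uniform positive lower bound on the $\ell^1$-distance, contradicting $\ell^1$-noise sensitivity.

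The main obstacle I anticipate is making the ``first increment carries non-vanishing information'' step fully rigorous and \emph{uniform in $n$}, rather than merely establishing a positive correlation for some $n$. The clean way is to pass through the Poisson boundary: the bounded harmonic function $h$ corresponds to a boundary function $\hat h$, the martingale $M_n = h(X_n)$ converges a.s.\ and in $L^2$ to $\hat h(X_\infty)$, and the conditional law of $X_\infty$ given $s_1 = g$ differs from its law given $s_1 = g'$ for some generators $g, g'$ (else $h$ is constant). One must then check that conditioning $Y_n^\r$'s boundary limit on ``$s_1$ shared'' versus ``$s_1$ refreshed'' produces limiting values that differ on a set of positive measure, and that the product test function $F_n$ built from a bounded approximation of $\hat h$ detects this at the level of the finite-$n$ measures with an error that is controlled uniformly. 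Handling the truncation/approximation so that the separating gap $\delta$ survives to the limit — and does not secretly depend on $n$ — is the technical heart of the argument.
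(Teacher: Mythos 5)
Your overall strategy --- pick a bounded harmonic $h$ that is non-constant on a translate of $\supp\mu$, test $\pi_n^\rho$ against $\mu_n^2$ with the bounded product function $F(x,y)=h(x)h(y)$, and conclude by $\ell^1$-duality --- is sound in outline, and the half concerning $\mu_n^2$ is exact: $\mathbb{E}_{\mu_n^2}[F]=h(e)^2$ for \emph{every} $n$ by harmonicity (no truncation is needed, since $h$, and hence its boundary function, is already bounded; that part of your worries is a red herring). The genuine gap is the inequality $\mathbb{E}_{\pi_n^\rho}[F]-\mathbb{E}_{\mu_n^2}[F]\ge(1-\rho)\delta$, which you assert (``Quantifying, one gets\dots'') and yourself flag as the technical heart: it is never proved, and the route you sketch --- condition on whether $s_1$ is refreshed --- cannot prove it as stated. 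Conditioned on $s_1=r_1=g$, the pair $(X_n,Y_n^\rho)$ has the law of $(gX',gY')$ where $(X',Y')$ is $\pi_{n-1}^\rho$-distributed, so its remaining $n-1$ increments are \emph{still shared} at rate $1-\rho$; the conditional expectation of $F$ is therefore $\mathbb{E}_{\pi_{n-1}^\rho}[h(g\,\cdot)h(g\,\cdot)]$, not $h(g)^2$, and on the refreshed event one gets cross terms $\mathbb{E}_{\pi_{n-1}^\rho}[h(g\,\cdot)h(g'\,\cdot)]$ with no sign control. Conditioning on the first step thus produces a recursion in the very quantity you are trying to bound, not a bound. (The claim is true and can be rescued, but by a different argument: condition on \emph{all} the shared randomness, i.e.\ the refreshment pattern and the unrefreshed increments; given this, the boundary points of $X$ and $Y^\rho$ are conditionally i.i.d., so the covariance of $\hat h(X_\infty)$ and $\hat h(Y_\infty)$ equals the variance of a conditional expectation, and the law of total variance bounds it below by $(1-\rho)\bigl(\sum_g\mu(g)h(g)^2-h(e)^2\bigr)$. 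Nothing of this sort appears in your proposal.)

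It is worth contrasting this with how the paper sidesteps the difficulty: it uses the noise-sensitivity hypothesis a \emph{second} time, which is the idea you are missing. From $\|\pi_n^\rho-\mu_n^2\|_1\to 0$, the decomposition $\pi^\rho=(1-\rho)\mu^{\mathrm{diag}}+\rho\,\mu^2$ of the first convolution factor, and the fact that convolution contracts $\ell^1$, it deduces $\|\mu^{\mathrm{diag}}\ast\mu_{n-1}^2-\mu_n^2\|_1\to 0$: under noise sensitivity, the pair sharing \emph{only its first increment} (all later increments independent) is $\ell^1$-close to the fully independent pair. For these two measures the test-function computation is exact for every $n$ by stationarity of the harmonic measure $\nu$: with $F(x,y)=\nu(x^{-1}A)\nu(y^{-1}A)$, the two integrals equal $\nu(A)^2$ and $\sum_z\mu(z)\nu(z^{-1}A)^2$ respectively, and strict convexity of the square (Jensen) provides a set $A$ for which these differ by a constant gap --- the contradiction. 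One never needs to evaluate anything against $\pi_n^\rho$ itself, so all the uniformity-in-$n$, martingale-convergence and boundary-correlation issues that your sketch leaves open simply do not arise.
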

The converse is not true because of infinite abelian groups, see Proposition \ref{prop:abelian}.

\begin{proof}
First observe that $\ell^1$-noise sensitivity implies
\begin{equation}\label{eq:firstdigit}
\|\mu^{\mathrm{diag}}*\mu^2_{n-1}-\mu^2_{n}\|_1 \underset{n\to \infty}{\longrightarrow} 0.
\end{equation}
Indeed, 
taking the convex decomposition of the first factor $\pi^\r=(1-\r)\mu^{\diag}+\r\mu^2$, the $\ell^1$-noise sensitivity first implies $(1-\r)\|\mu^{\diag}\ast \pi^\r_{n-1}-\mu^2_n\|_1 \to 0$. 
On the other hand $\|\mu^{\diag}\ast \pi^\r_{n-1}-\mu^{\diag}\ast\mu^2_{n-1}\|_1\to 0$ by left multiplication. Convergence (\ref{eq:firstdigit}) follows.

Assume by contradiction that $(G,\mu)$ is not Liouville, i.e. admits a non-trivial Poisson boundary $(\Pi,\nu)$. We claim that there exists a subset $A \subset \Pi$ such that
\begin{equation}\label{eq:defA}
\nu(A)^2 < \sum_{z \in G} \mu(z) \nu(z^{-1}A)^2.
\end{equation}
Indeed, as $\nu$ is stationary $\nu(A)=\sum_{z \in G} \mu(z) \nu(z^{-1}A)$ and the above holds with a large inequality by Jensen. As the square function is strictly convex, equality implies $\nu(z^{-1}A)=\nu(A)$ for all $z$ in the support of $\mu$ and a fortiori for all $z$. Therefore equality for all $A$ would imply triviality of the Poisson boundary, whence the claim. 

Now by stationarity of $\nu^2$ with respect to $\mu^2$, we have for each $n$ that
\[
\nu(A)^2=\sum_{(x,y) \in G \times G} \mu_n^2(x,y) \nu(x^{-1}A)\nu(y^{-1}A)
\]
and
\[
\sum_{z \in G} \mu(z) \nu(z^{-1}A)^2=\sum_{(x,y) \in G \times G} \mu^{\mathrm{diag}}*\mu^2_{n-1}(x,y) \nu(x^{-1}A)\nu(y^{-1}A)
\]
Convergence (\ref{eq:firstdigit}) implies that the difference between right hand sides tends to zero,
raising a contradiction to (\ref{eq:defA}) that $\nu(A)^2=\sum_{z \in G} \mu(z) \nu(z^{-1}A)^2$.
\end{proof}

For the free groups, this theorem can be improved.

\begin{theorem}\label{notlargefree}
Finitely supported random walks on free groups are not noise sensitive at large scale.
\end{theorem}

\begin{proof} We follow the same line of reasoning as in the previous proof. We are now given a sublinear sequence $s_n$ such that $U^{s_n}(\pi^\r_n,\mu^2_n)\to 0$. By diagonal coupling of the first factor, we deduce $U^{s_{n-1}}(\pi^\r_n,\pi^\r\ast \mu^2_{n-1})\to 0$. By (\ref{Us}) in Section~\ref{sec:met}, we obtain:
\begin{equation}\label{Us2}
U^{s_n+s_{n-1}}(\pi^\r\ast\mu^2_{n-1},\mu^2_n)\longrightarrow 0.
\end{equation}

Let us consider a subset $A$ satisfying (\ref{eq:defA}) in the Poisson boundary which is now the geometric boundary of the Cayley tree, i.e. the set of infinite geodesic rays out of the neutral element $e$. Up to taking a close enough approximation, we may assume that $A$ is open-closed, i.e. a finite union of ``cylinder" sets. Recall that  ``cylinder" sets $C_h$ are indexed by $h \in \mathbb F$, and $C_h$ is the collection of geodesic rays out of $e$ crossing $h$. Moreover, the point $h$ partitions $\mathbb F \setminus\{h\}$ into the set $\hat C_h$ consisting of points $g$ from which there is a geodesic ray towards $C_h$ not going through $h$ and its complement $\hat C_n^c$.

Recall that the speed (or drift) $\l:=\lim \frac{1}{n}\E d(e,X_n)>0$ is positive. Given $\a \in (0,\l)$, we define $[x]_n$ to be the point at distance $\a n$ from $e$ on the geodesic ray from $e$ to $x$, when $x$ is at distance at least $\a n$ from the identity. Otherwise $[x]_n=e$. We will use the fact that
\begin{equation}\label{conv1}
\P(d(e,X_n) \ge \a n) \underset{n \to \infty}{\longrightarrow} 1.
\end{equation}
We claim that
\begin{equation}\label{acut}
\left|\sum_{G^2} \mu^2_n(g)g\n(A)- \sum_{G^2} \mu^2_n(g)[g]_n\n(A)\right| \underset{n\to\infty}{\longrightarrow} 0
\end{equation}
where we write $g\n(A):=\n(x^{-1}A)\n(y^{-1}A)$ for $g=(x,y)$ and $[g]_n=([x]_n,[y]_n)$. Convoluting by $\pi^\r$ which has finite support, we get
\begin{equation}\label{acut2}
\left|\sum_{G^2}\pi^\r\ast \mu^2_{n-1}(g)g\n(A)- \sum_{G^2} \pi^\r\ast \mu^2_{n-1}(g)[g]_n\n(A)\right| \underset{n\to\infty}{\longrightarrow} 0.
\end{equation}
 Now let $\n_n$ denote appropriate couplings to get (\ref{Us2}), we have:
\begin{align*}
&\left|\sum_{G^2} \mu^2_n(g) [g]_n\n(A)-\sum_{G^2} \pi^\r\ast\mu^2_{n-1}(g)[g]_n\n(A)\right| \\ &\quad =\left|\sum_{(g_1,g_2)\in G^4} \n_n(g_1,g_2)\left([g_1]_n\n(A)-[g_2]_n\n(A)\right)\right| \\
&\quad \le 2\sum_{(g_1,g_2) \in G^4} \n_n(g_1,g_2)\mathbf{1}_{\{[g_1]_n\neq [g_2]_n\}}\\
&\quad \le U^{s_n+s_{n-1}}(\pi^\r\ast \mu^2_{n-1},\mu^2_n)+4 \P\left[d_G(e,X_n)\le \a n+s_n+s_{n-1}\right] \underset{n\to\infty}{\longrightarrow} 0
\end{align*}
where the last inequality is due to the geometry of the tree that if both $x,x'$ are at distance $\ge \a n+s$ from identity and $d(x,x')\le s$, then $[x]_n=[x']_n$. The convergence to $0$ follows by (\ref{Us2}) and (\ref{conv1}) for $\a <\a'<\l$ as $ \a n+s_n+s_{n-1}<\a'n$ for $n$ large. Together with (\ref{acut}) and (\ref{acut2}), we deduce
\[
\left|\sum_{G^2} \mu^2_n(g) g\n(A)-\sum_{G^2} \pi^\r\ast\mu^2_{n-1}(g)g\n(A)\right| \underset{n\to\infty}{\longrightarrow} 0
\]
raising the same contradiction as at the end of the proof of Theorem~\ref{thm:ell1Liouville}.

There remains to prove the claim (\ref{acut}). As $A$ is a finite union of cylinder sets, it is enough to prove that for any $h_1,h_2$ in $\mathbb F$, one has
\begin{equation}\label{42f}
\left| \sum_{(x,y) \in G^2}\mu_n(x)x\nu(C_{h_1})\mu_n(y)y\nu(C_{h_2})- \sum_{(x,y) \in G^2}\mu_n(x)[x]_n\nu(C_{h_1})\mu_n(y)[y]_n\nu(C_{h_2}) \right| \underset{n\to\infty}{\longrightarrow} 0
\end{equation}
We use the facts that $x\nu(A)=\nu(x^{-1}A)$ is the probability that a random walk started in $x$ tends to a boundary point in $A$, and that for any $\e>0$ there exists $N$ such that when $d(x,z) \ge N$, the probability that a random walk started in $x$ ever hits $z$ is at most $\e$. Using union bounds, we can deduce that for any $R \ge 0$, there exists $N'$ such that when $d(x,h_1) \ge N'$, a random walk started at $x$ hits the ball $B(h_1,R)$ with probability at most $\e$.
We use it for $R$ large enough that a random walk trajectory cannot move from $\hat C_{h_1}$ to its complement (or vice-versa) without stepping in $B(h_1,R)$. It follows that for $d(e,x)\ge \a n \ge N'+d(e,h_1)$ we have
\begin{align*}
&\textrm{ if }x \in \hat C_{h_1}, \textrm{ then } x\nu(C_{h_1}) \ge 1-\e \textrm{ and } [x]_n\nu(C_{h_1}) \ge 1-\e, \\
&\textrm{ if }x \in \hat C_{h_1}^c, \textrm{ then } x\nu(C_{h_1}) \le \e \textrm{ and } [x]_n\nu(C_{h_1}) \le \e.
\end{align*}
Finally, let us denote $\Lambda_n$ the set of points $(x,y)$ in $G^2$ such that $d(e,x)\ge \a n$ and $d(e,y)\ge \a n$ where $n$ is large enough to have $d(e,x)\ge \a n \ge N'+d(e,h_1)$ and the analogous property for $y,h_2$ in place of $x,h_1$.  For $(x,y) \in \Lambda_n$, one has 
\[
|x\nu(C_{h_1})y\nu(C_{h_2})- [x]_n\nu(C_{h_1})[y]_n\nu(C_{h_2})| \le 5\e
\]
by checking the four cases.
Then the expression in (\ref{42f}) is 
\[
\left| \sum_{(x,y) \in G^2}\mu_n(x)\mu_n(y) \left(x\nu(C_{h_1})y\nu(C_{h_2})- [x]_n\nu(C_{h_1})[y]_n\nu(C_{h_2})\right)\right| \le 2 \mu_n^2(\Lambda_n^c)+5\e \mu_n^2(\Lambda_n) \le 4\e+5\e
\] where $ \mu_n^2(\Lambda_n^c)$ is bounded above using (\ref{conv1}) for $n$ large.
\end{proof}

The above proof can probably be generalised to groups with hyperbolic properties. However in full generality, it is not clear if sublinearly close points have close actions on the harmonic measure.

\begin{theorem}\label{prop:enstoLiouville}
If $(G,\mu)$ is partially entropy noise sensitive, then $(G,\mu)$ is Liouville.
\end{theorem}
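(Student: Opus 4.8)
The plan is to show that if $(G,\mu)$ is partially entropy noise sensitive, then the asymptotic entropy $\lim \frac{1}{n}\H(X_n)$ vanishes, which is equivalent to the Liouville property by \cite{KV}. The key relation to exploit is the identity $\H(Y_n^\r|X_n)=\H(\pi_n^\r)-\H(\mu_n)$ noted after the definition of entropy noise sensitivity, combined with the obvious bound $\H(\pi_n^\r)\le \H(\mu_n^2)=2\H(\mu_n)$, so that $\H(Y_n^\r|X_n)\le \H(\mu_n)$ always holds. Partial entropy noise sensitivity gives the matching lower bound $\H(Y_n^\r|X_n)\ge c\,\H(\mu_n)$ for some $c>0$ uniform in $\r$.

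The heart of the argument is to extract from the \emph{uniformity in $\r$} a quantitative statement as $\r\to 0$ that forces linear growth of entropy to collapse. First I would expand $\H(\pi_n^\r)=\H\big((X_n,Y_n^\r)\big)$ using the coupling structure: the increment measure is $\pi^\r=(1-\r)\mu^{\diag}+\r\mu^2$, so $Y_n^\r$ differs from $X_n$ only in those steps (about $\r n$ of them in expectation) that were refreshed. Conditioning on the random set $R\subseteq\{1,\dots,n\}$ of refreshed indices, one bounds $\H(Y_n^\r|X_n)$ above in terms of the entropy contributed by the refreshed increments. Concretely, knowing $X_n$ and the refreshed coordinates, $Y_n^\r$ is determined by the refreshed increments $(s'_k)_{k\in R}$ and the retained ones; a subadditivity/data-processing estimate should yield a bound of the shape $\H(Y_n^\r|X_n)\le \E|R|\cdot\H(\mu)+o(\H(X_n))$ type, or more usefully a bound proportional to the \emph{asymptotic} entropy: the information that survives refreshing is carried by the genuine entropy production of the walk. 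The aim is an inequality of the form $\H(Y_n^\r|X_n)\le \varphi(\r)\,\H(X_n)+o(\H(X_n))$ with $\varphi(\r)\to 0$ as $\r\to 0$.

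Assembling these two bounds gives $c\,\H(X_n)\le \H(Y_n^\r|X_n)\le \varphi(\r)\,\H(X_n)+o(\H(X_n))$ for every fixed $\r$. Dividing by $n$ and taking $\limsup$, writing $h:=\lim\frac{1}{n}\H(X_n)$ for the asymptotic entropy (which exists by subadditivity), yields $c\,h\le \varphi(\r)\,h$ for all $\r\in(0,1)$. Letting $\r\to 0$ forces $c\,h\le 0$, and since $c>0$ this gives $h=0$, i.e. the Liouville property. Note that the full (non-partial) case is the statement of the previously cited theorem, and partial sensitivity suffices precisely because we only need one uniform constant $c$ to survive the limit $\r\to0$.

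The main obstacle I expect is establishing the upper bound $\H(Y_n^\r|X_n)\le \varphi(\r)\,\H(X_n)+o(\H(X_n))$ with $\varphi(\r)\to 0$, rather than merely the crude $\H(Y_n^\r|X_n)\le\H(X_n)$. The delicate point is that refreshing only a small fraction $\r$ of increments should only be able to inject a correspondingly small fraction of new information relative to the walk's entropy; making this rigorous requires relating the conditional entropy of the output to the entropy \emph{rate} and controlling the correlations introduced by the group operation (the refreshed increments interact multiplicatively with the retained prefix and suffix). I would try to prove it by a conditioning-on-$R$ argument together with the conditional form of the inequality $\H(gh)\le \H(g)+\H(h)$, and if a clean $\varphi(\r)$ is hard to obtain directly, fall back on showing that any $h>0$ would contradict the uniform lower bound by choosing $\r$ small enough that the refreshed steps simply cannot account for a $c$-fraction of the total entropy produced over $n$ steps.
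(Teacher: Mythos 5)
Your overall skeleton --- reduce to asymptotic entropy via Kaimanovich--Vershik, argue by contradiction assuming $h_\infty:=\lim\frac{1}{n}\H(X_n)>0$, and exploit that partial sensitivity demands a single constant $c$ uniform in $\r$ --- is the same as the paper's. But the step you yourself flag as the main obstacle is a genuine gap, and both routes you sketch toward it fail. First, your ``concrete'' data-processing bound is broken: $Y_n^\r$ is indeed determined by the refreshed increments together with the retained ones, but the retained increments are \emph{not} recoverable from $X_n$; given only the endpoint, the increment sequence still has conditional entropy $n\H(\mu)-\H(X_n)$, which is linear in $n$, so no subadditivity argument conditioned on $X_n$ yields $\H(Y_n^\r|X_n)\le\E|R|\cdot\H(\mu)+o(\H(X_n))$. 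Second, and more fundamentally, the inequality you aim for, $\H(Y_n^\r|X_n)\le\varphi(\r)\H(X_n)+o(\H(X_n))$ with $\varphi(\r)\to0$, is \emph{false} without a non-Liouville hypothesis: for the simple walk on $\Z$ one has $\H(Y_n^\r|X_n)/\H(X_n)\to1$ for every fixed $\r$ (Proposition \ref{prop:abelian}), and if your inequality held unconditionally it would prove that \emph{no} random walk is partially entropy noise sensitive, contradicting Theorem \ref{thm:lamplighter}. So the bound can only be proved \emph{using} $h_\infty>0$; your main argument never uses it, and your fallback sentence, which does, is a restatement of the goal rather than a mechanism.

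The missing mechanism is a block decomposition at a scale where the entropy rate has already converged. Since $h_\infty=\inf_n\frac{1}{n}\H(\mu_n)$, for any $\e>0$ one can fix $N$ with $\H(X_N)\le(h_\infty+\e)N$. Write $X_n=\s_1\cdots\s_k$ and $Y_n^\r=\t_1\cdots\t_k$ in paired blocks of length $N$; a block is touched with probability $\r_N=1-(1-\r)^N$, and the conditional entropy of a touched block given its original is at most $\H(X_N)$ (for instance by concavity of entropy, since the marginal law of $\t_i$ is $\mu^{\ast N}$). Subadditivity over block pairs then gives, up to lower-order terms (the indicators of which blocks are touched contribute $O(n/N)$, absorbed by taking $N\ge 1/\e$), $\H(X_n,Y_n^\r)\le(1+\r_N)\frac{n}{N}\H(X_N)$, hence
\[
\H(Y_n^\r|X_n)\le\left[(1+\r_N)(h_\infty+\e)-h_\infty\right]n\le\left(\r_N+\tfrac{2\e}{h_\infty}\right)\H(X_n),
\]
and choosing first $\e$ (hence $N$) and then $\r$ small makes the right-hand side at most $c\H(X_n)$, the desired contradiction. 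The blocks are essential: per-increment accounting (the case $N=1$) leaves an error term $(\H(\mu)-h_\infty)n$ in the bound, and this term does not vanish as $\r\to0$. That is exactly why your ``small fraction of refreshed steps injects a small fraction of information'' heuristic cannot be implemented increment by increment: it must be run at a coarse-grained scale $N$ where one block carries entropy approximately $h_\infty N$, which is precisely what the non-Liouville assumption provides.
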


This is one direction of Conjecture \ref{conj:entropy}.

\begin{proof}
Assume $(G,\mu)$ is not Liouville, which means that the associated Poisson boundary is not trivial. By \cite{KV}, this implies
\[
h_\infty(\mu):=\lim_{n\to \infty} \frac{\H(\mu_n)}{n}>0.
\]
Thus $\H(X_n)=\H(\mu_n)=h_\infty(\mu)n +o(n)$. Similarly
\[
h_\infty(\pi^\r):=\lim_{n\to \infty} \frac{\H(\pi_n^\r)}{n} \ge h_\infty(\mu)
\]
because $\H(\pi_n^\r)=\H(X_n,Y_n^\r)=\H(Y_n^\r|X_n)+\H(X_n) \ge \H(X_n)=\H(\mu_n)$. It follows that $\H(\pi_n^\r)=h_\infty(\pi^\r)n+o(n)$.

Moreover $\pi^\r=(1-\r)\mu^{\diag}+\r \mu^2$ tends to $\mu^\diag$ weakly as $\r \to 0$, and $\H(\pi^\r) \to \H(\mu^\diag)=\H(\mu)$ as these measures have support in the finite set $\supp(\mu)^2$. The upper semi-continuity of asymptotic entropy \cite[Proposition 4]{AAV} implies 
\[
\lim_{\r \to 0} h_\infty(\pi^\r)=h_\infty(\mu).
\]
Therefore we have
\[
\lim_{n \to \infty}\frac{\H(Y_n^\r|X_n)}{\H(X_n)}=\frac{h_\infty(\pi^\r)-h_\infty(\mu)}{h_\infty(\mu)} \underset{\r \to 0}{\longrightarrow} 0,
\]
ruling out partial entropy noise sensitivity.
\end{proof}

\begin{remark}\label{rem:F}
Noise sensitivity in average distance does not imply Liouville property. Indeed for the simple random walk on the free group $\mathbb{F}_2$, one easily computes $\E d_{\mathbb{F}_2}(X_n,X'_n) \sim n \sim \E d_{\mathbb{F}_2}(X_n,Y_n^\r)$ for any $\r>0$.
It follows that average distance noise sensitivity does not imply large scale noise sensitivity.
\end{remark}

\section{Finite and abelian groups}\label{sec:fab}

\subsection{Finite groups}

\begin{proposition}\label{prop:finite}
Any finite group is $\ell^1$-noise sensitive, entropy noise sensitive and  noise sensitive in average distance with respect to any generating probability measure. 
\end{proposition}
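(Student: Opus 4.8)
The plan is to prove the three statements by exploiting the fact that on a finite group $G$, the random walk equidistributes. Since $\mu$ is generating, the random walk is irreducible on $G$; if it is also aperiodic then $\mu_n \to \mathrm{unif}_G$, the uniform measure on $G$, in $\ell^1$. I would first reduce to the aperiodic case or handle periodicity by passing to the relevant cyclic decomposition — but the cleanest route is to observe that the noised chain on $G \times G$ has transition measure $\pi^\r = (1-\r)\mu^{\diag} + \r\mu^2$, and to study its equidistribution directly. The key point is that for $\r \in (0,1)$, the measure $\pi^\r$ charges the diagonal-product structure in a way that mixes both coordinates, so that $\pi^\r_n$ converges to the uniform measure $\mathrm{unif}_{G\times G} = \mathrm{unif}_G \times \mathrm{unif}_G = \mu_n^2$-limit on $G\times G$.

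The central step is to show $\|\pi_n^\r - \mu_n^2\|_1 \to 0$. First I would establish that $\mu_n^2 \to \mathrm{unif}_{G\times G}$ in $\ell^1$ (this is just the product of two equidistributing walks, modulo periodicity). Then I would show $\pi_n^\r \to \mathrm{unif}_{G\times G}$ as well. The subtle issue is aperiodicity of $\pi^\r$ on $G\times G$: even if $\mu$ is aperiodic on $G$, one must check the support of $\pi^\r$ generates $G\times G$ as a semigroup and drives convergence to the full uniform measure rather than to a measure supported on a proper subgroup or concentrated on the diagonal. The mixing term $\r\mu^2$ is exactly what breaks any diagonal obstruction, since with positive probability the two coordinates receive independent increments; combined with irreducibility this should force the stationary distribution of $\pi^\r$ to be uniform on $G\times G$ and the walk to equidistribute. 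Once both $\pi_n^\r$ and $\mu_n^2$ converge to the same uniform limit in $\ell^1$, the triangle inequality gives $\|\pi_n^\r - \mu_n^2\|_1 \to 0$, i.e. $\ell^1$-noise sensitivity.

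Entropy noise sensitivity then follows almost for free: as $\mu_n \to \mathrm{unif}_G$ we have $\H(X_n) \to \log|G|$, and as $\pi_n^\r \to \mathrm{unif}_{G\times G}$ we have $\H(\pi_n^\r) \to \log|G\times G| = 2\log|G|$. Using the identity $\H(Y_n^\r|X_n) = \H(\pi_n^\r) - \H(\mu_n)$ recorded in the excerpt, this gives $\H(Y_n^\r|X_n) \to 2\log|G| - \log|G| = \log|G|$, so the ratio $\H(Y_n^\r|X_n)/\H(X_n) \to 1$. For noise sensitivity in average distance, both $(X_n,Y_n^\r)$ and $(X_n,X_n')$ converge in distribution to a pair of independent uniform variables on $G$, and since $G$ is finite the distance $d_G$ is a bounded function, so $\E d_G(X_n,Y_n^\r) \to \E_{\mathrm{unif}\times\mathrm{unif}} d_G = \E d_G(X_n,X_n')$, giving the $\liminf \ge 1$ condition (in fact a limit equal to $1$).

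The main obstacle I anticipate is handling periodicity cleanly. If $\mu$ is not aperiodic, $\mu_n$ does not converge but rather oscillates among translates of a subgroup's uniform measure, and one must argue that $\pi_n^\r$ tracks $\mu_n^2$ along this periodic structure — equivalently, that the extra mixing from $\r\mu^2$ does not create a period mismatch between the two coordinates. I expect the resolution is that $\pi^\r$ and $\mu^2$ induce the same period on $G\times G$ because the diagonal term $\mu^{\diag}$ keeps the two coordinates phase-locked in their common cyclic class while the product term distributes within each class; one then applies equidistribution within each periodic class and sums. A tidy alternative that sidesteps periodicity entirely is to replace convergence of $\mu_n$ by the Cesàro or to invoke the standard fact that for any generating $\mu$ on a finite group the laws $\mu_n$ and $\mu_{n+1}$ have the same limit points which are uniform on cosets, and to verify that $\pi^\r_n$ and $\mu^2_n$ share these limit points; I would choose whichever formulation keeps the argument shortest.
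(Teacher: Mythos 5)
Your proposal is correct and follows essentially the same route as the paper: both arguments rest on equidistribution of $\pi_n^\rho$ and $\mu_n^2$ on $G\times G$, followed by the same deductions for entropy (via $\H(Y_n^\rho|X_n)=\H(\pi_n^\rho)-\H(\mu_n)$) and for average distance (via boundedness of $d_G$). The one substantive difference is your treatment of periodicity, and there you are in fact more careful than the paper: the paper's proof simply asserts that $\pi^\rho$ generates $G^2$ and cites exponential convergence to $\unif_{G^2}$, but that assertion fails when $\mu$ is periodic. For example, if $\mu$ is uniform on the transpositions of $S_3$, then $\supp\pi^\rho=\supp\mu\times\supp\mu$ generates only the index-two subgroup $\{(x,y):\mathrm{sgn}(x)=\mathrm{sgn}(y)\}$ of $G\times G$, and neither $\pi_n^\rho$ nor $\mu_n^2$ converges to $\unif_{G^2}$.

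Your sketched resolution (the two walks are phase-locked on the same cyclic classes) is the right one, and it does go through: writing $H\trianglelefteq G$ for the normal subgroup with $\supp\mu\subseteq gH$ and $G/H$ cyclic of order $d$, the supports of $(\pi^\rho)^{\ast d}$ and $(\mu^2)^{\ast d}$ both contain $\supp(\mu^{\ast d})\times\supp(\mu^{\ast d})$, so both are irreducible and aperiodic on $H\times H$; hence, observed along times $n\equiv r\ (\mathrm{mod}\ d)$, both $\pi_n^\rho$ and $\mu_n^2$ converge to the uniform measure on the same coset $g^rH\times g^rH$, the triangle inequality gives $\ell^1$-noise sensitivity, and the entropy and distance conclusions follow with $\log|H|$ in place of $\log|G|$. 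Two caveats: the Ces\`aro alternative you float would not suffice, since the definitions demand genuine limits rather than averaged ones; and in the degenerate case $|H|=1$ (e.g. $G=\Z/2\Z$, $\mu=\delta_1$) the walk is deterministic, $\H(X_n)\equiv 0$, and the entropy ratio is $0/0$ --- an edge case that affects the proposition as stated and both proofs alike, not something specific to your argument.
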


\begin{proof}
The measure $\pi^\r:=(1-\r)\mu_{\mathrm{diag}}+\r \mu^2$ is generating of $G^2$, so $\pi_n^\r$ satisfies $\|\pi_n^\r-\mathrm{unif}_{G^2}\|_1 \le \b_1^n$ for some $\b_1<1$ (see e.g. \cite{Saloff-Coste-RWFG}). Similarly $\|\mu_n^2-\mathrm{unif}_{G^2}\|_1 \le \b_2^n$ for some $\b_2<1$. Therefore the random walk in $G$ is $\ell^1$-noise sensitive, with total variation converging exponentially fast to zero.

Since the set $G$ is finite, it follows that $\H(\pi_n^\r)\rightarrow \H(\mathrm{unif}_{G^2})=2\H(\mathrm{unif}_G)$ and $\H(\mu_n) \rightarrow \H(\mathrm{unif}_G)$. This implies entropy noise sensitivity. 

Moreover let $Z$ denote a uniformly random variable in $G$, it is immediate that $|\E d_G(X_n,X'_n)-\E d_G(e,Z)|\le \diam(G)\|\mu_n-\unif_G\|_1$ and 
\[
|\E d_G(X_n,Y_n^\r)-\E d_G(e,Z)|\le \diam(G)\left(\|\mu_n-\unif_G\|_1+\|\pi_n^\r-\mathrm{unif}_{G^2}\|_1\right).
\] 
By $\ell^1$-noise sensitivity, both converge to zero, whence distance noise sensitivity.
\end{proof}

This result is not surprising since we defined noise sensitivity only asymptotically. An interesting further question is whether it is possible that noise sensitivity manifests itself before cut-off, that is before the random walk seems equidistributed. However we will not pursue in this direction.

\subsection{Infinite abelian groups}

In abelian groups, computations are explicit, based on the local central limit theorem.

\begin{proposition}\label{prop:abelian}
Let $G$ be an infinite finitely generated abelian group and $\mu$ be any finitely supported probability measure. Then 
\begin{itemize}
\item $(G,\mu)$ is entropy noise sensitive,
\item $(G,\mu)$ is not, even partially,  noise sensitive in average word distance,
\item $(G,\mu)$ is not $\ell^1$-noise sensitive.
\end{itemize}
\end{proposition}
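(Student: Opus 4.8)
The plan is to treat the three bullets separately, since each rests on a different feature of the random walk on an infinite finitely generated abelian group. Throughout I would use the structure theorem $G \cong \Z^d \times F$ with $F$ finite and $d \ge 1$ (since $G$ is infinite), and exploit that $\mu$ is finitely supported so that the central limit theorem applies to the $\Z^d$-component $\bar X_n$ of $X_n$: after centering and scaling by $\sqrt{n}$, the law of $\bar X_n$ converges to a nondegenerate Gaussian on $\R^d$ (nondegenerate because $\mu$ is generating, so the projected measure genuinely spreads in all $d$ directions). The finite factor $F$ equidistributes exponentially fast and contributes only lower-order terms, so I would phrase the arguments for $\Z^d$ and remark that $F$ is harmless.

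For the \textbf{entropy noise sensitivity} bullet, the key observation is that in $\Z^d$ with finitely supported $\mu$, the entropy grows logarithmically: $\H(X_n) \sim \frac{d}{2}\log n$, because the local central limit theorem shows $\mu_n$ is spread over $\sim n^{d/2}$ points with density profile a fixed Gaussian shape. The same applies to $\pi_n^\r$ on $G \times G$: since the coordinates $(X_n, Y_n^\r)$ have a joint Gaussian limit after scaling (of dimension $2d$, and nondegenerate for $\r \in (0,1)$ because the refreshed increments inject genuine independence), one gets $\H(\pi_n^\r) \sim \frac{2d}{2}\log n = d \log n \sim 2\,\H(X_n) \sim \H(\mu_n^2)$. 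Since $\H(Y_n^\r|X_n) = \H(\pi_n^\r) - \H(\mu_n)$, this yields $\H(Y_n^\r|X_n)/\H(X_n) \to 1$. The cleanest route is a local CLT / entropy estimate giving $\H(\mu_n) = \frac{d}{2}\log n + O(1)$ with the $O(1)$ constant depending on the covariance; since these additive constants are swamped by $\log n$, the ratio of leading terms is all that matters. The main work here is making the logarithmic entropy asymptotics rigorous from the local limit theorem, but this is standard.

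For the \textbf{failure of noise sensitivity in average word distance}, I would compute both sides explicitly via the CLT. The spread $\E d_G(X_n, X_n')$ and the noised average distance $\E d_G(X_n, Y_n^\r)$ both scale like $\sqrt{n}$, so I would compute the limiting constants $c_{\text{indep}} := \lim n^{-1/2}\,\E d_G(X_n, X_n')$ and $c_\r := \lim n^{-1/2}\,\E d_G(X_n, Y_n^\r)$. The increments of $X_n - Y_n^\r$ are nonzero only at the refreshed positions (a $\r$-fraction, in expectation), so $X_n - Y_n^\r$ behaves like a random walk with $\sim \r n$ genuinely independent increments, giving $c_\r \propto \sqrt{\r}$, whereas $X_n - X_n'$ uses $\sim 2n$ independent increments, giving $c_{\text{indep}} \propto \sqrt{2}$. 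Since $\sqrt{\r} < \sqrt{2}$ strictly for every $\r < 1$, one gets $\liminf \E d_G(X_n,Y_n^\r)/\E d_G(X_n,X_n') = \sqrt{\r/2} < 1$; and crucially this quantity tends to $0$ as $\r \to 0$, so there is no uniform constant $c > 0$, ruling out even \emph{partial} noise sensitivity in average distance. The essential point is the linearity/additivity of the limiting Gaussian covariances in the number of refreshed steps.

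For the \textbf{failure of $\ell^1$-noise sensitivity}, I would invoke part (1) of the first theorem in the introduction, i.e. Corollary \ref{cor:surject}: an infinite finitely generated abelian group admits a nontrivial homomorphism onto $\Z$ (project onto any infinite cyclic free factor), and homomorphisms onto free abelian groups obstruct $\ell^1$-noise sensitivity. Alternatively, and more self-containedly, I would argue directly from the CLT: the $\ell^1$-distance $\|\pi_n^\r - \mu_n^2\|_1$ does not vanish because the centered, $\sqrt{n}$-rescaled laws converge to \emph{different} Gaussians — the limit of $\pi_n^\r$ has a nontrivial correlation between its two $d$-dimensional blocks (correlation $1-\r > 0$ per coordinate, inherited from the shared increments), while the limit of $\mu_n^2$ is a product of two independent Gaussians (zero correlation). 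Since $\ell^1$-distance is preserved under the pushforward that records the rescaled position, and two Gaussians with distinct covariance matrices are at positive $\ell^1$-distance, one concludes $\liminf \|\pi_n^\r - \mu_n^2\|_1 > 0$. The main obstacle across the three parts is the last one: one must be careful that convergence in distribution of the rescaled laws transfers to a lower bound on the \emph{discrete} $\ell^1$-distance, which requires comparing the correlation structures rather than naively, but the cited Corollary \ref{cor:surject} makes this painless.
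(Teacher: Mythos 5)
Your proposal is sound in substance, but the three bullets sit differently relative to the paper's proof, and one of your routes has a circularity problem.

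The paper runs all three bullets through a single abelian identity: conditioning on the number $\ell\sim\mathcal{B}(n,\r)$ of refreshed steps, commutativity gives $Y_n^\r=X_nZ_\ell$, where $Z_\ell$ is an $\ell$-step walk of law $\check{\mu}\ast\mu$. Entropy noise sensitivity follows from $\H(Y_n^\r|X_n)\ge \H(Z_\ell|\ell)\asymp \log(\r n)\sim \H(X_n)$; the average-distance statement from $\E d_G(X_n,Y_n^\r)=\E d_G(e,Z_\ell)\asymp\sqrt{\r n}$ versus $\E d_G(X_n,X_n')\asymp\sqrt{n}$; and failure of $\ell^1$-noise sensitivity is then deduced from the failure of partial average-distance noise sensitivity via Proposition~\ref{prop:ell1todist}, plus a CLT remark. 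Your second bullet is exactly this argument (your ``increments of $X_n-Y_n^\r$ are nonzero only at refreshed positions'' is the identity $Y_n^\r=X_nZ_\ell$); your constant $\sqrt{\r/2}$ should be of order $\sqrt{\r}$, since each refreshed increment $s_i'-s_i$ carries doubled variance, but only the $O(\sqrt{\r})\to 0$ behaviour matters. Your first bullet is a genuinely different, valid route: you compute $\H(\pi_n^\r)\sim d\log n$ by a local CLT for the joint walk on $G\times G$ and subtract, instead of lower-bounding the conditional entropy by the fresh part. This is conceptually clean, but it needs two repairs the paper's route avoids: (i) ``generating'' does not imply nondegenerate covariance --- e.g.\ $\mu$ uniform on $\{(1,0),(0,1)\}$ generates $\Z^2$ but its support lies in an affine line --- so $d$ must be replaced throughout by the rank $r$ of the affine hull of $\supp\mu$ (the same $r$ appears on both sides of the ratio, so the conclusion survives); (ii) the joint walk $\pi^\r$ typically lives on a proper coset of a sublattice of $\Z^{2d}$ (parity constraints), so the local CLT must be applied there. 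Both are routine fixes.

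The genuine flaw is your preferred argument for the third bullet: you cannot invoke Corollary~\ref{cor:surject}, because in the paper that corollary is \emph{deduced from} Proposition~\ref{prop:abelian} (via stability of $\ell^1$-noise sensitivity under quotients), so citing it here is circular. Your self-contained alternative, however, is correct and should be promoted to the main argument: the rescaled laws of $\pi_n^\r$ and $\mu_n^2$ converge weakly to distinct nondegenerate Gaussians (cross-correlation $1-\r$ versus $0$), the $\ell^1$-distance does not increase under the pushforward by the (injective) rescaling map, and total variation is lower semicontinuous along weak convergence --- test against bounded continuous functions bounded by $1$, then use that for absolutely continuous limits the supremum over such functions realizes the total variation. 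Hence $\liminf\|\pi_n^\r-\mu_n^2\|_1>0$. This matches the paper's closing CLT observation, except that the paper asserts the limit is $2$; since the two limit Gaussians are mutually absolutely continuous, the Gaussian picture only yields a positive limit strictly below $2$, which is all that is needed --- so your more cautious claim is actually the defensible one.
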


The proof of the first point is based on the following folklore lemma.

\begin{lemma}\label{lemma:H}
Let $X_n$ be a random walk on $\Z^d$ with step distribution $\mu$ finitely supported and generating. Then
\[
\H(X_n)=\frac{d}{2}\log n+o(\log n).
\]
\end{lemma}

\begin{proof}[Proof of Lemma \ref{lemma:H}]
We denote $m=\E X_1$.
We fix $\e>0$. The central limit theorem gives $K$ such that $\P(|X_n-nm| \le K \sqrt{n})\ge 1-\e$. Then
\begin{equation}\label{Hsplit}
\H(X_n)=\H(X_n||X_n-nm|\le K\sqrt{n})\P(|X_n-nm|\le K\sqrt{n})+\H(X_n||X_n-nm|> K\sqrt{n})\P(|X_n-nm|> K\sqrt{n})+h_1,
\end{equation}
where $0\le h_1 \le \log 2$ is the entropy of the conditioning partition.
The local central limit theorem \cite{DMD,lawler_limic_2010} provides two constants $c_1,c_2>0$ depending only on $K$ such that for all n large enough, for all $|x-nm| \le K\sqrt{n}$,
\begin{equation}\label{lclt}
\frac{c_1}{n^{\frac{d}{2}}} \le \P(X_n=x||X_n-nm| \le K \sqrt{n}) \le \frac{c_2}{n^{\frac{d}{2}}}
\end{equation}
It follows that for $n$ large enough
\begin{equation}\label{Hbd}
\frac{d}{2}\log(n)+\log\frac{1-\e}{c_2} =  \log \frac{(1-\e)n^{\frac{d}{2}}}{c_2} \le \H(X_n||X_n-nm|\le K\sqrt{n}) \le \frac{d}{2}\log(n)+\log\frac{1-\e}{c_1}
\end{equation}
 and the lower bound together with (\ref{Hsplit}) give $\liminf \frac{\H(X_n)}{\log n} \ge \frac{d}{2}$.
 
 On the other hand, as $X_n$ is confined in a ball of radius $Cn$ and of volume at most $C'n^d$, we have
 \[
 \H(X_n||X_n-nm|> K\sqrt{n}) \le \log(C'n^d)=d \log(n)+\log(C').
 \]
 For $n$ large enough, we deduce with (\ref{Hsplit}) and (\ref{Hbd})
 \[
 \H(X_n) \le (1-\e)\frac{d}{2}\log(n)+\e d\log(n)+C''
 \]
 which gives $\limsup \frac{\H(X_n)}{\log n} \le \frac{d}{2}$.
\end{proof}

\begin{proof}[Proof of Proposition \ref{prop:abelian}] Such a group $G$ is isomorphic to $\Z^d \times F$ for some finite abelian group $F$. Lemma~\ref{lemma:H} still holds in this more general setting for the finite group $F$ contributes at most a constant to entropy.

We first prove entropy noise sensitivity, we have to show $\lim \frac{\H(Y_n^\r|X_n)}{\log n}=\frac{d}{2}$. The inequality $\H(Y_n^\r|X_n) \le \H(X_n)$ provides the upper bound. 
 
 Let $\ell$ be the number of refreshed variables in $Y_n^\r$. Given $\ell$, we can write $(X_n,Y_n^\r)=(X_{n-\ell}X_\ell,X_{n-\ell}X'_\ell)$ where $X_\ell$ and $X_\ell'$ are independent, of law $\mu_\ell$. They are also independent of $X_{n-\ell}$ which has law $\mu_{n-\ell}$. As $\ell$ follows a binomial law $\mathcal{B}(n,\r)$, we have for any $\e>0$
 \begin{equation}\label{roeps}
 \P(A_\e)=\P(\ell \in [(\r-\e)n,(\r+\e)n]) \underset{n\to \infty}{\longrightarrow} 1
 \end{equation}
 where we denote $A_\e=\{\ell \in [(\r-\e)n,(\r+\e)n]\}$. As further conditioning does not increase entropy, we have the lower bound
 \begin{equation*}
 \H(Y_n^\r|X_n)  = \H(X_{n-\ell}X_\ell'|X_n)  \ge \H(X_{n-\ell}X_\ell'|X_n, X_{n-\ell},\ell)=\H(X_\ell'|X_n,X_{n-\ell},\ell)=\H(X_\ell'|\ell).
 \end{equation*}
It follows that
\begin{equation*}
\H(Y_n^\r|X_n) \ge \sum_{k=0}^{n} \H(X_k')\P(\ell=k) \ge \sum_{k=(\r-\e)n}^{n} \H(X_k')\P(\ell=k) \ge \H(X'_{(\r-\e)n})\P(\ell \ge (\r-\e)n) 
\end{equation*}
and thus by Lemma \ref{lemma:H} and (\ref{roeps})
\begin{equation*}
\H(Y_n^\r|X_n) \ge \left(\frac{d}{2}\log((\r-\e)n)+o(\log n)\right)(1+o(1))=\frac{d}{2}\log n+o(\log n)
\end{equation*}
which is the required lower bound to get entropy noise sensitivity.

Regarding the spread, there exist two constants $c_3,c_4>0$ depending only on $\mu$ such that
\begin{equation}\label{spreadab}
c_3\sqrt{n} \le \E d_G(X_n,X_n') \le c_4 \sqrt{n}.
\end{equation}
Conditioning as above by the number $\ell$ of refreshed variables, we have
\[
\E d_G(X_n, Y_n^\r)=\E d_G(X_{n-\ell}X_\ell,X_{n-\ell}X_\ell')=\E d_G(X_\ell,X_\ell')=\sum_{k=0}^{Cn} \E \left( d_G(X_\ell,X_\ell')|\ell=k\right)\P(\ell=k)
\]
where $C=2\max\{d_G(e,x)|x \in \supp \mu\}$.
By splitting the sum at $(\r+\e)n$ for arbitrary $\e>0$ and using (\ref{roeps}), we get
\[
\E d_G(X_n, Y_n^\r) \le c_4\sqrt{(\r+\e)n}\P(\ell \le (\r+\e)n)+c_4\sqrt{Cn}\P(\ell >(\r+\e)n)=c_4\sqrt{\r+\e}\sqrt{n}(1+o(1)).
\]
With (\ref{spreadab}) we deduce that
\[
\limsup_{n \to \infty} \frac{\E d_G(X_n,Y_n^\r)}{\E d_G(X_n,X_n')}\le \frac{c_4}{c_3} \sqrt{\r} \underset{\r \to 0}{\longrightarrow} 0.
\]
This rules out partial noise sensitivity in  average word distance, and thus $\ell^1$-noise sensitivity by Proposition \ref{prop:ell1todist}.
\end{proof}

\begin{corollary}\label{cor:surject}
Let $(G,\mu)$ be a group with a surjective homomorphism $G\twoheadrightarrow \Z$ and $\mu$ finitely supported, then $(G,\mu)$ is not $\ell^1$-noise sensitive.
\end{corollary}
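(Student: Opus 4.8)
The plan is to push the random walk forward through the homomorphism and thereby reduce to the case of $\Z$ already settled in Proposition~\ref{prop:abelian}. Write $\phi\colon G\twoheadrightarrow \Z$ for the given surjection and let $\bar\mu:=\phi_*\mu$ denote the pushforward measure on $\Z$. Since $\mu$ is finitely supported, so is $\bar\mu$; and since $\phi$ is a homomorphism, $\langle\supp\bar\mu\rangle=\phi(\langle\supp\mu\rangle)=\phi(G)=\Z$, so that $\bar\mu$ is a generating finitely supported measure on the infinite abelian group $\Z$. Hence $(\Z,\bar\mu)$ is a legitimate random walk to which Proposition~\ref{prop:abelian} applies, and which that proposition tells us is \emph{not} $\ell^1$-noise sensitive.

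The key step is to observe that taking the image under $\phi$ intertwines with the noising construction. Because $\phi$ is a homomorphism, $\phi(X_n)=\phi(s_1)+\dots+\phi(s_n)$ is the $\Z$-valued random walk $\bar X_n$ with increments $\phi(s_k)$ of law $\bar\mu$, and refreshing an increment $s_k$ by an independent copy $s_k'$ (with probability $\r$) pushes forward to refreshing $\phi(s_k)$ by the independent increment $\phi(s_k')$ of law $\bar\mu$, since the very same Bernoulli$(\r)$ choices and resamplings are used. Thus the pair $(\phi(X_n),\phi(Y_n^\r))$ has exactly the joint law $\bar\pi_n^\r$ of the noised $\Z$-walk, while $(\phi(X_n),\phi(X_n'))$ has law $\bar\mu_n^2$. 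Equivalently, $\bar\pi_n^\r=(\phi\times\phi)_*\pi_n^\r$ and $\bar\mu_n^2=(\phi\times\phi)_*\mu_n^2$.

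Next I would invoke the elementary fact that $\ell^1$-distance (total variation) is non-increasing under any pushforward: for a map $f$ and measures $\xi_1,\xi_2$ one has $\|f_*\xi_1-f_*\xi_2\|_1\le\|\xi_1-\xi_2\|_1$. Applied to $f=\phi\times\phi$ this gives
\[
\|\bar\pi_n^\r-\bar\mu_n^2\|_1 \le \|\pi_n^\r-\mu_n^2\|_1 .
\]
If $(G,\mu)$ were $\ell^1$-noise sensitive, the right-hand side would tend to $0$ for every $\r\in(0,1)$, forcing $\|\bar\pi_n^\r-\bar\mu_n^2\|_1\to 0$ and making $(\Z,\bar\mu)$ $\ell^1$-noise sensitive, in contradiction with Proposition~\ref{prop:abelian} (where in fact $\|\bar\pi_n^\r-\bar\mu_n^2\|_1\to 2$). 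Hence $(G,\mu)$ is not $\ell^1$-noise sensitive.

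The argument is short, so there is no deep obstacle; the only point requiring genuine care is the second paragraph, namely verifying that $(\phi\times\phi)_*\pi_n^\r$ is truly the noised walk associated to $\bar\mu$ and not merely some coupling with the correct marginals. This is exactly where one uses that $\phi$ respects the group product and that the refreshing is performed increment by increment, so that the noise structure descends intact to $\Z$. Once this intertwining is in place, the monotonicity of total variation under pushforward delivers the conclusion immediately.
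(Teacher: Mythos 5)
Your proof is correct and follows essentially the same route as the paper: both push the walk forward through the quotient map, use that the $\ell^1$-distance is non-increasing under pushforward of measures, and conclude by contradiction with Proposition~\ref{prop:abelian}. Your write-up is simply more explicit about the intertwining of the noising construction with the homomorphism, a point the paper leaves implicit.
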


\begin{proof} It follows from Proposition~\ref{prop:abelian}
because $\ell^1$-noise sensitivity is preserved under taking quotients $G\twoheadrightarrow \bar{G}$ (for the induced measure): if $\mu_1,\mu_2$ are two measures on $G$, then $\|\bar{\mu_1}-\bar{\mu_2}\|_1 \le \|\mu_1-\mu_2\|_1$. 
\end{proof}

\subsection{Product groups}

\begin{proposition}\label{prop:product}
Let $G_1,G_2$ be two finitely generated groups with probabilities $\mu_1$ and $\mu_2$ respectively. If both $(G_1,\mu_1)$ and $(G_2,\mu_2)$ are noise sensitive in the sense of one definition of section~\ref{sec:def}, then $(G_1\times G_2,\mu_1\times \mu_2)$ is noise sensitive as well in this definition.
\end{proposition}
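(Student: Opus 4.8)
The plan is to reduce everything to the two factors by decomposing each object coordinate-wise, the only genuine difficulty being that the two factor walks are driven by a common refreshing sequence. Write $X_n=(X_n^{(1)},X_n^{(2)})$ and $Y_n^\r=(Y_n^{(1)},Y_n^{(2)})$, where the increments $s_k=(s_k^{(1)},s_k^{(2)})$ and the refreshments are governed by a single sequence of bits $(\e_k)$ shared by both coordinates (refreshing the pair $s_k$ means resampling both of its entries at once). Two observations organize the argument. First, two independent samples factor completely: regrouping the four coordinates as $[(X^{(1)},X'^{(1)}),(X^{(2)},X'^{(2)})]$ gives $\mu_n^2=(\mu_1)_n^2\times(\mu_2)_n^2$. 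Second, the marginal of $\pi_n^\r$ on coordinate $i$ is exactly the joint law $\pi_n^{\r,(i)}$ of $(X_n^{(i)},Y_n^{(i)})$ for the factor walk $(G_i,\mu_i)$. The one place where the product fails to split is that, because the bits $(\e_k)$ are shared, $\pi_n^\r$ is not the product $\pi_n^{\r,(1)}\times\pi_n^{\r,(2)}$ of its coordinate marginals.

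For the coupling- and metric-based notions I would first dispatch the factored comparison and then absorb this shared-refreshing defect. Taking the product $\nu_n^{(1)}\otimes\nu_n^{(2)}$ of optimal couplings furnished by noise sensitivity of each factor shows $\pi_n^{\r,(1)}\times\pi_n^{\r,(2)}$ is close to $(\mu_1)_n^2\times(\mu_2)_n^2=\mu_n^2$: for $\ell^1$ one uses subadditivity of the distance under products, for noise sensitivity at scale $s_n$ the triangle inequality (\ref{Us}), and for (partial) average-distance sensitivity the additivity $d_{G_1\times G_2}=d_{G_1}+d_{G_2}$ of the word metric. There then remains only to compare $\pi_n^\r$ with $\pi_n^{\r,(1)}\times\pi_n^{\r,(2)}$. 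For the entropy notion the analogous reduction is via the chain rule: since $X_n^{(2)}$ is independent of the entire coordinate-one data and of $(\e_k)$, conditioning on $(X_n^{(1)},Y_n^{(1)})$ is dominated by conditioning on the refreshing set $R=\{k:\e_k=1\}$, so that
\[
\H(Y_n^\r\mid X_n)\ \ge\ \H(Y_n^{(1)}\mid X_n^{(1)})+\H(Y_n^{(2)}\mid X_n^{(2)},R),
\]
while $\H(Y_n^\r\mid X_n)\le\H(Y_n^\r)=\H(X_n)$ holds for free. The first summand is handled by the factor, and the second reduces to showing $I(Y_n^{(2)};R\mid X_n^{(2)})=o(\H(X_n))$; in each of the partial variants the same manipulations give the conclusion with the factor constants.

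In all cases the crux — and the step I expect to be the main obstacle — is to show that the common refreshing data creates asymptotically negligible correlation between the two factor outputs. Conditionally on the number $\ell$ of refreshed steps the two coordinate processes are independent, and by exchangeability the conditional law $q_n^{(i)}(\cdot\mid\ell)$ (resp. the conditional entropy) depends on $R$ only through $\ell$. Thus, writing $\ell,\ell'$ for independent $\mathcal B(n,\r)$ variables, one is reduced to an estimate of the form $\E_{\ell,\ell'}\|q_n^{(i)}(\cdot\mid\ell)-q_n^{(i)}(\cdot\mid\ell')\|_1\to 0$ (and its entropy analogue), i.e.\ to proving that the conditional laws are insensitive to the exact refresh count across the bulk of the binomial. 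Note that the mixture $\sum_\ell\mathcal B(n,\r)(\ell)\,q_n^{(i)}(\cdot\mid\ell)=\pi_n^{\r,(i)}$ being close to the target does \emph{not} by itself control the individual components, so I would establish the estimate from binomial concentration ($\ell=\r n+O(\sqrt n)$) together with the hypothesis that factor noise sensitivity holds for \emph{every} $\r\in(0,1)$: the same components $q_n^{(i)}(\cdot\mid\ell)$ are then mixed by binomial weights centred at every $\r n$, all such mixtures converge to the factored target, and a deconvolution of these overlapping windows pins down the individual conditional laws on the typical range of $\ell$. This last point, rather than the bookkeeping of the coordinate marginals, is where the real content lies.
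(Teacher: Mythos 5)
Your bookkeeping of the easy part is correct ($\mu_n^2=(\mu_1)_n^2\times(\mu_2)_n^2$, the coordinate marginals of $\pi_n^\rho$ are the factor pair-laws $\pi_n^{\rho,(i)}$, and the product-of-couplings argument compares $\pi_n^{\rho,(1)}\times\pi_n^{\rho,(2)}$ with $\mu_n^2$), and your observation that the shared refreshing bits prevent $\pi_n^\rho$ from factoring as $\pi_n^{\rho,(1)}\times\pi_n^{\rho,(2)}$ is genuinely right --- note that the paper's own proof is the single sentence ``it follows straightforwardly from the definitions,'' which is exactly the product-of-couplings argument and silently passes over this point. However, your resolution of the crux does not work, for two reasons. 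First, the exchangeability claim --- that the conditional law $q_n^{(i)}(\cdot\mid R)$ of $(X_n^{(i)},Y_n^{(i)})$ given the refresh set $R$ depends on $R$ only through $\ell=|R|$ --- is false for nonabelian factors. Take the free group with $\mu$ uniform on $\{a,b\}$ and $n=2$: refreshing position $1$ gives the pair $(s_1s_2,\,s_1's_2)$, which charges the event $\{X=ab,\ Y=bb\}$ with probability $1/8$, while refreshing position $2$ gives $(s_1s_2,\,s_1s_2')$, which gives that event probability $0$. So the conditional law depends on the positions of the refreshed steps, not just their number. (The claim is true for abelian factors, but the proposition is stated for all finitely generated groups, and the paper's application of it is to powers of $D_\infty$, which is nonabelian.) Consequently your assertion that the two coordinate pairs are conditionally independent given $\ell$ also fails; they are only conditionally independent given $R$ itself.

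Second, and more fundamentally, the final ``deconvolution'' step is not a proof, and the principle it invokes is false. What your argument needs is the quenched statement $\E_R\|q_n^{(i)}(R)-(\mu_i)_n^2\|_1\to 0$ (closeness for \emph{typical} refresh sets), whereas factor noise sensitivity only gives the annealed statement $\|\E_R[q_n^{(i)}(R)]-(\mu_i)_n^2\|_1\to 0$ for every $\rho$. The latter, even assumed for all $\rho\in(0,1)$, does not formally imply the former: components may oscillate on a scale finer than the binomial window. For instance, if abstractly $q^{(i)}(R)=T_i+(-1)^{|R|}\epsilon_i$ with $\epsilon_i$ a small signed measure of total mass zero, then every Bernoulli-$\rho$ mixture converges to $T_i$ because $\E_R[(-1)^{|R|}]=(1-2\rho)^n\to 0$, yet no component converges; worse, if both coordinates oscillate in phase, the shared-bit law satisfies $\E_R[q^{(1)}(R)\otimes q^{(2)}(R)]=T_1\otimes T_2+o(1)+\epsilon_1\otimes\epsilon_2$, so the non-vanishing cross term $\epsilon_1\otimes\epsilon_2$ is exactly the failure mode you are trying to exclude. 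Hence closing the gap requires input specific to random-walk pair laws (ruling out such correlated oscillations), which neither your sketch nor the paper supplies. The same quenched-versus-annealed issue undercuts your entropy argument, where the needed bound $I(Y_n^{(2)};R\mid X_n^{(2)})=o(\H(X_n))$ is stated as ``what it reduces to'' but never established; it is not a consequence of entropy noise sensitivity of the factor, since a pair can be (nearly) independent while jointly revealing the shared randomness $R$.
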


\begin{proof}
It follows straightforwardly from the definitions.
\end{proof}

The choice of product probability measure is important here, because noise sensitivity notions may depend on the choice of probability. It is the case  for $\ell^1$-noise sensitivity and noise sensitivity in average distance by Corollary \ref{cor:dpm}.

\section{The infinite dihedral group}\label{sec:dih}

Let $D_\infty=\langle a,b|a^2=b^2=1\rangle$ denote the infinite dihedral group. The random walk on $D_\infty$ driven by $\frac{1}{2}(\d_a+\d_b)$ is called simple. The random walk driven by $\frac{1}{3}(\d_e+\d_a+\d_b)$  is called lazy simple.

\begin{theorem}\label{dihedral} Let $D_\infty$ be the infinite dihedral group.
\begin{itemize}
\item The simple random walk on $D_\infty$ is not $\ell^1$-noise sensitive.
\item The lazy simple random walk on $D_\infty$ is $\ell^1$-noise sensitive.
\end{itemize}
\end{theorem}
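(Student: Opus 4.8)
The plan hinges on the fact that the Cayley graph of $D_\infty$ with respect to $\{a,b\}$ is the bi-infinite line: every element $g$ has exactly the two neighbours $ga,gb$, so there is a graph isomorphism $\phi\colon D_\infty\to\Z$ with $\phi(e)=0$ under which the word distance becomes $|\phi(g)-\phi(h)|$. Through $\phi$, right multiplication by $a$ or $b$ moves to an adjacent integer while multiplication by $e$ stays put, so the simple (resp. lazy) walk is carried to the simple (resp. lazy) nearest-neighbour walk on $\Z$ and $d_G(X_n,Y_n^\r)=|\phi(X_n)-\phi(Y_n^\r)|$. The only subtlety is that the direction in which a fixed generator moves depends on the parity of the current position: writing $o_n:=(-1)^{\phi(X_n)}\in\{\pm1\}$ for the orientation, a step by $a$ changes $\phi$ by $+o_n$, a step by $b$ by $-o_n$, a step by $e$ by $0$, and any genuine move ($a$ or $b$) flips $o$. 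The whole dichotomy turns on this orientation: for the simple walk $o_n=(-1)^n$ is deterministic, whereas laziness (the letter $e$ does not flip $o$) makes $o_n$ genuinely random, and this is exactly what noise can exploit.

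For the simple walk, since $o_{j-1}=(-1)^{j-1}$ is deterministic, the increment $\phi(X_j)-\phi(X_{j-1})$ is a fixed bijective function of $s_j$ alone (equal to $(-1)^{j-1}$ if $s_j=a$ and to $(-1)^{j}$ if $s_j=b$). Hence $\phi(X_n)=\sum_j\xi_j$ with $\xi_j$ i.i.d. uniform in $\{\pm1\}$, and refreshing $s_j$ refreshes $\xi_j$ to an independent uniform sign. Thus $\phi$ transports the joint law of $(X_n,Y_n^\r)$ \emph{exactly} to that of an $n$-step simple random walk on $\Z$ together with its $\r$-noised version, and $\mu_n^2$ to two independent copies. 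As $\phi$ is a bijection it preserves $\ell^1$-distance, so Proposition~\ref{prop:abelian} applied to $\Z$ gives $\|\pi_n^\r-\mu_n^2\|_1\to 2$, and the simple walk is not $\ell^1$-noise sensitive.

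For the lazy walk I would analyse the pair $(\phi(X_n),\phi(Y_n^\r))$ as a Markov-modulated walk on $\Z^2$ carrying the internal orientation state $(o_n^X,o_n^Y)\in\{\pm1\}^2$. Conditioning on the refreshed set and on which steps are moves in each walk, the residual randomness is the i.i.d. fair signs $\t_j$ of the moves; an \emph{unrefreshed} move at step $j$ contributes the \emph{same} sign $\t_j$ to both coordinates, weighted by $o_{j-1}^X$ and $o_{j-1}^Y$, whereas refreshed moves contribute independently. The sole source of correlation is therefore the relative orientation $\chi_j:=o_{j-1}^X o_{j-1}^Y$. A direct check shows $\chi_j$ flips precisely at refreshed steps where exactly one of the two instructions is a move, i.e. independently with probability $\tfrac49\r$, so $\E[\chi_j]=(1-\tfrac89\r)^{\,j-1}$ decays geometrically. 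Since each shared move contributes $\chi_j$ to the covariance but $1$ to each variance, the covariance of $(\phi(X_n),\phi(Y_n^\r))$ stays bounded (of order $1/\r$) while both variances grow linearly in $n$; the two coordinates are thus asymptotically uncorrelated, and being sums of independent bounded increments they become asymptotically independent and jointly Gaussian, with marginals those of $\mu_n$. This is exactly where laziness is decisive: for the simple walk $\chi_j\equiv1$ keeps the pair pinned near the diagonal at distance $\sqrt\r\sqrt n$, whereas here noise randomises the relative orientation so that even shared unrefreshed moves push the two coordinates in independently chosen directions and $\E d_G(X_n,Y_n^\r)\asymp\sqrt n$ attains the full spread.

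The remaining and principal difficulty is to promote this asymptotic independence to convergence in $\ell^1$. I would establish a local limit theorem for the $\Z^2$-valued pair walk by resolving the characteristic function $\E\!\big[\exp(i\a\,\phi(X_n)+i\b\,\phi(Y_n^\r))\big]$ according to the orientation state, turning its evolution into the iteration of a $4\times4$ transfer operator $T(\a,\b)$. At $\a=\b=0$ this is the transition matrix of the orientation-pair chain, with simple leading eigenvalue $1$ and a spectral gap coming from the flip probability $\tfrac49\r>0$; the vanishing asymptotic covariance rate forces the mixed second derivative of $\log\l(\a,\b)$ at the origin to be zero, so for $\a,\b=O(n^{-1/2})$ the leading eigenvalue satisfies $\l(\a,\b)^n\to\exp\!\big(-\tfrac{\s^2}{2}(\a^2+\b^2)\big)$, which is the joint characteristic function of two independent copies. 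The pair walk is aperiodic and irreducible on $\Z^2$, so Fourier inversion together with a uniform bound on $T(\a,\b)$ away from the origin (using the gap) yields a uniform local limit theorem for both $\pi_n^\r$ and $\mu_n^2$ with the same Gaussian profile, whence $\|\pi_n^\r-\mu_n^2\|_1\to 0$ by a Scheffé-type summation over $\Z^2$. I expect the genuine work to lie in the uniformity of this local limit theorem and in controlling the transfer operator over the whole torus $[-\pi,\pi]^2$, rather than in the probabilistic mechanism, which the orientation analysis already renders transparent.
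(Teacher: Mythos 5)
Your first bullet is correct and is essentially the paper's own argument: for the non-lazy walk both orientations are deterministically $(-1)^{j-1}$, so the graph isomorphism $\phi$ transports the pair $(X_n,Y_n^\r)$ exactly onto a simple random walk on $\Z$ together with its $\r$-noised version, and Proposition~\ref{prop:abelian} rules out $\ell^1$-noise sensitivity. For the lazy walk, your mechanism (laziness makes the relative orientation $\chi_j$ flip with probability $\tfrac{4}{9}\r$ at refreshed steps, so the covariance of the pair stays $O(1/\r)$ while both variances grow linearly) is exactly the paper's own intuition; the paper notes explicitly that this argument only yields noise sensitivity at large scale, and it obtains the $\ell^1$ statement by a different device, namely an explicit coupling of $(X_n,Y_n^\r)$ with $(X_n,X'_n)$, built by ordering the inter-refreshment words by length and choosing the signs $\a_j$ through rules that force the two trajectories to meet and then coalesce. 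Your proposed route to $\ell^1$ --- a local limit theorem for the $4$-state Markov-additive pair walk via the Nagaev--Guivarc'h transfer operator, followed by a Scheff\'e summation --- is genuinely different and in principle viable.

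There is, however, a concrete gap located precisely at the step you yourself flagged as ``the genuine work'': there is \emph{no} uniform bound on the spectral radius of $T(\a,\b)$ away from the origin of the torus. The internal state is a deterministic function of the position, $o_n^X=(-1)^{\phi(X_n)}$, so the cocycle $e^{i\pi\Delta_X}$ is the coboundary $o_n^X o_{n-1}^X$. Consequently $T(\pi,0)$ has eigenvalue $1$ with eigenfunction $g(o^X,o^Y)=o^X$, since
\[
\bigl(T(\pi,0)g\bigr)(q_0)=\E\bigl[(-1)^{\Delta_X}g(q_1)\,\big|\,q_0\bigr]=\E\bigl[o_1^X o_0^X\, o_1^X\,\big|\,q_0\bigr]=o_0^X=g(q_0),
\]
and likewise $T(0,\pi)$ (with $g=o^Y$) and $T(\pi,\pi)$ (with $g=o^X o^Y$) have spectral radius $1$; indeed $T\bigl((\pi,0)+u\bigr)=D\,T(u)\,D$ with $D=\diag(o^X)$, so these three points carry full Gaussian peaks conjugate to the one at the origin. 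Thus aperiodicity and irreducibility of the \emph{support} of the pair walk on $\Z^2$ do not give the operator bound your Fourier inversion requires, and the argument as written does not go through. The gap is fixable: near $(\pi,0)$ the leading spectral projection of $T(u)$ pairs against $D\mathbf{1}$ through the stationary distribution of the orientation chain, and $\E_{\pi_{\mathrm{st}}}[o^X]=0$ (equivalently, the parity bias $\E[(-1)^{\phi(X_n)}]=(-1/3)^n$ decays exponentially), so each extra peak contributes only $O(n^{-3/2})$, uniformly in $x$, to the point probabilities, leaving the single Gaussian profile needed for the Scheff\'e step. But this supplementary argument is exactly the missing ingredient; without it the claimed uniform local limit theorem, and hence the $\ell^1$ conclusion, is not established.
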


The Cayley graph of $D_\infty$ with respect to the generating set $\{a,b\}$ is a line, so it coincides with the Cayley graph of $\Z$. Therefore random walks on $D_\infty$ are related to random walks on the integers, which explains the first statement. However the edge labelings are very different. This difference will be key to the second statement.

\begin{corollary}\label{cor:dns}
The simple random walk on $D_\infty$ is not  noise sensitive in average word distance. The lazy simple random walk on $D_\infty$ is  noise sensitive in average word distance.
\end{corollary}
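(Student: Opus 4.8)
The plan is to handle the two bullets by separate routes: the simple walk reduces, after identifying $D_\infty$ metrically with $\Z$, to the integer computation already carried out in Proposition \ref{prop:abelian}, while the lazy walk is obtained by feeding the $\ell^1$-noise sensitivity of Theorem \ref{dihedral} through the comparison machinery of Section \ref{sec:relationship}.

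For the simple random walk I would begin by recording that the Cayley graph of $D_\infty$ for $\{a,b\}$ is the bi-infinite line, so the signed-position map $\phi\colon D_\infty\to\Z$ (with $\phi(e)=0$, $\phi(a)=-1$, $\phi(b)=+1$, \dots) is a bijection identifying the word metric of $D_\infty$ with that of $\Z$: $d_{D_\infty}(g,h)=|\phi(g)-\phi(h)|$. The mechanism I would exploit is that under $\phi$ each increment moves the position by $\pm1$, and in the non-lazy case the sign of the $k$-th step depends only on the parity of $k$; writing $\sigma_k=+1$ if $s_k=b$ and $\sigma_k=-1$ if $s_k=a$, one checks $\phi(X_n)=\sum_{k=1}^n(-1)^{k+1}\sigma_k$, a simple random walk on $\Z$. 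Consequently $\phi(X_n)-\phi(Y_n^\r)=\sum_{k\,:\,r_k\neq s_k}(-1)^{k+1}(\sigma_k-\sigma'_k)$ is supported on the refreshed coordinates alone, and since the deterministic signs $(-1)^{k+1}$ do not change the law of the magnitude, the two quantities $\E d(X_n,Y_n^\r)$ and $\E d(X_n,X'_n)$ equal the corresponding expectations for the simple random walk on $\Z$. The computation of Proposition \ref{prop:abelian} then applies directly, giving $\E d(X_n,Y_n^\r)\asymp\sqrt{\r n}$ against $\E d(X_n,X'_n)\asymp\sqrt n$, hence a ratio tending to $\sqrt\r<1$ for every $\r\in(0,1)$; so the simple random walk is not, even partially, noise sensitive in average word distance.

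For the lazy simple random walk I would chain the implications collected in Section \ref{sec:relationship}. Theorem \ref{dihedral} gives $\ell^1$-noise sensitivity, which, the word metric on $D_\infty\times D_\infty$ being integer-valued, is precisely noise sensitivity at scale $1$. Here the marginal position process is a genuine lazy simple random walk on $\Z$ (from every state one of $a,b$ increases and the other decreases $\phi$, each with probability $\tfrac13$, and $e$ keeps it fixed), so its increments are i.i.d.\ and bounded and the spread satisfies $\E d(X_n,X'_n)\asymp\sqrt n\to\infty$. Taking $s_n=1=o(\sqrt n)$ upgrades scale-$1$ sensitivity to noise sensitivity at large scale, after which it remains to invoke Proposition \ref{prop:ell1todist}.

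The one genuinely quantitative point, which I expect to be the main (if routine) obstacle, is verifying the homogeneous spread hypothesis of Proposition \ref{prop:ell1todist}. Since $\phi(X_n)-\phi(X'_n)$ is a sum of at most $2n$ independent bounded increments, it is sub-Gaussian at scale $\sqrt n$, while $\E d(X_n,X'_n)\asymp\sqrt n$. Bounding the contribution of an $\e$-mass set $A$ by its extremal configuration of farthest pairs and integrating the sub-Gaussian tail yields $\mathrm{f}_n(\e)=O\big(\e\sqrt{\log(1/\e)}\big)$, the common factor $\sqrt n$ cancelling between the weighted tail and the spread; this is uniform in $n$ and tends to $0$ as $\e\to0$, which is exactly homogeneous spread. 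Proposition \ref{prop:ell1todist} then delivers noise sensitivity in average word distance for the lazy walk, completing the corollary.
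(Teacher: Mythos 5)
Your proof is correct and follows essentially the same route as the paper: the non-lazy walk is identified, via the parity correspondence between time and position, with a simple random walk on $\Z$ and then handled by Proposition \ref{prop:abelian} (exactly the argument given inside the paper's proof of Theorem \ref{dihedral}), while the lazy walk's $\ell^1$-noise sensitivity from Theorem \ref{dihedral} is upgraded to average-distance noise sensitivity through Proposition \ref{prop:ell1todist}. The only difference is that you explicitly verify the homogeneous-spread hypothesis by a sub-Gaussian tail estimate, a step the paper leaves implicit when invoking Proposition \ref{prop:ell1todist}.
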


\begin{proof}[Proof of Corollary \ref{cor:dns}]
The first statement is justified below in the proof of Theorem \ref{dihedral}. The second statement follows from Proposition~\ref{prop:ell1todist}.
\end{proof}

Note that both simple and lazy simple random walks on $D_\infty$ are entropy noise sensitive by the same argument as in abelian case.

\begin{corollary}\label{cor:dpm}
The $\ell^1$-noise sensitivity and the  noise sensitivity in average word distance are not group properties but depend on the probability measure. A fortiori they are not preserved under quasi-isometries.
\end{corollary}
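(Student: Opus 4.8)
The plan is to read off both assertions directly from the statements about $D_\infty$ already established, with essentially no further computation. For the first assertion, I would simply juxtapose the two bullet points of Theorem~\ref{dihedral} together with Corollary~\ref{cor:dns}: on the single group $D_\infty$ the simple random walk ($\mu(a)=\mu(b)=\frac12$) is neither $\ell^1$-noise sensitive nor noise sensitive in average word distance, whereas the lazy simple random walk ($\mu(a)=\mu(b)=\mu(e)=\frac13$) is both. Since these are two finitely supported generating measures on the \emph{same} group, the truth value of ``$(D_\infty,\mu)$ is $\ell^1$-noise sensitive'' (respectively, noise sensitive in average word distance) is not determined by the isomorphism type of $D_\infty$ alone. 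Hence neither notion is a group property, and both depend genuinely on $\mu$.

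For the \emph{a fortiori} clause about quasi-isometries, the first point is to fix a precise meaning, since noise sensitivity is attached to a pair $(G,\mu)$ rather than to $G$ in isolation: I would read quasi-isometry invariance as the requirement that the property be transported along a quasi-isometry matching the two measures. The cleanest witness then compares $D_\infty$ with $\Z$. The Cayley graph of $D_\infty$ for the generating set $\{a,b\}$ is the bi-infinite line, which is literally the Cayley graph of $\Z$ for $\{-1,+1\}$, so the identity is a quasi-isometry (indeed a graph isomorphism) carrying the lazy simple walk on $D_\infty$ to the lazy simple walk on $\Z$. By the second bullet of Theorem~\ref{dihedral} the former is $\ell^1$-noise sensitive, whereas by Proposition~\ref{prop:abelian} the latter is not $\ell^1$-noise sensitive, and not even partially noise sensitive in average word distance, because $\Z$ is an infinite finitely generated abelian group. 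Thus the property fails to be preserved by this quasi-isometry, which settles the final clause.

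I expect no serious obstacle, as every ingredient is already in place; the only point that needs care is the one just highlighted, namely making the phrase ``preserved under quasi-isometries'' precise for a property of a pair. Once one adopts the natural reading above, the comparison of $D_\infty$ with $\Z$, two groups sharing the identical line as Cayley graph but with incompatible algebraic edge labellings, exactly the discrepancy emphasized after Theorem~\ref{dihedral}, furnishes the counterexample. Alternatively, one may argue purely formally: a quasi-isometry invariant is in particular an isomorphism invariant of the group, so a property that already fails to be a group property cannot be a quasi-isometry invariant, disposing of the clause without exhibiting an explicit pair.
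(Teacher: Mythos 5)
Your proposal is correct and takes essentially the same route as the paper: the corollary is stated there without proof precisely because it is immediate from juxtaposing Theorem~\ref{dihedral} and Corollary~\ref{cor:dns} (same group $D_\infty$, two measures, opposite answers), with the quasi-isometry clause following a fortiori since any quasi-isometry invariant would in particular be an isomorphism invariant, hence a group property. Your extra explicit witness --- the identification of the Cayley graph of $D_\infty$ with that of $\Z$ carrying the lazy simple walk to the lazy simple walk, which fails to be $\ell^1$-noise sensitive by Proposition~\ref{prop:abelian} --- is a correct and even stronger concretization (a graph isometry, not merely a quasi-isometry), but it is not needed for the statement.
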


\begin{corollary}
For any positive integer $d$, there exists an $\ell^1$-noise sensitive probability group commensurable with $\Z^d$.
\end{corollary}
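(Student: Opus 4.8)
The plan is to realize such a group as a direct power of the infinite dihedral group. Concretely, I would set $G=D_\infty^d=D_\infty\times\cdots\times D_\infty$ ($d$ factors) and equip it with the product measure $\mu=\mu_0\times\cdots\times\mu_0$, where $\mu_0$ denotes the lazy simple random walk measure $\mu_0(a)=\mu_0(b)=\mu_0(e)=\frac13$ on each factor. The support of $\mu$ contains $(a,e,\dots,e),(b,e,\dots,e),\dots$ and their analogues in every coordinate, so $\mu$ is generating on $G$.

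The analytic content is then immediate from the results already established. First I would record that each factor $(D_\infty,\mu_0)$ is $\ell^1$-noise sensitive by Theorem~\ref{dihedral}. Proposition~\ref{prop:product} gives $\ell^1$-noise sensitivity of a direct product of two noise sensitive walks; applying it inductively (writing $D_\infty^d=D_\infty^{d-1}\times D_\infty$ and $\mu=\mu_0^{\times(d-1)}\times\mu_0$) yields that $(G,\mu)=(D_\infty^d,\mu_0^{\times d})$ is $\ell^1$-noise sensitive.

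It remains to verify the group-theoretic claim that $G$ is a finite extension of $\Z^d$. Recall that $D_\infty=\langle a,b\mid a^2=b^2=1\rangle$ contains the translation subgroup $T=\langle ab\rangle\cong\Z$, which is normal of index $2$: conjugation by $a$ sends $ab$ to $(ab)^{-1}$, and $D_\infty=T\sqcup aT$. Taking the product of these subgroups over the $d$ coordinates, $T^d\cong\Z^d$ is a normal subgroup of $G=D_\infty^d$ of index $2^d$, with quotient $(\Z/2\Z)^d$. Hence $G$ fits into a short exact sequence $1\to\Z^d\to G\to(\Z/2\Z)^d\to 1$, so $G$ is a finite extension of $\Z^d$, as required.

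Since the construction is a direct combination of Theorem~\ref{dihedral} and Proposition~\ref{prop:product}, there is no genuine obstacle here; the only points needing a word are the (trivial) induction on the number of factors in the application of Proposition~\ref{prop:product} and the explicit identification of $\Z^d$ as the finite-index translation subgroup of $D_\infty^d$.
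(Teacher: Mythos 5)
Your proof is correct and follows exactly the paper's argument: the paper's own proof is simply ``Take a direct product $D_\infty^d$ and use Proposition~\ref{prop:product}.'' You have merely filled in the details the paper leaves implicit --- the choice of the lazy measure $\mu_0$ on each factor (forced by Theorem~\ref{dihedral}), the induction in applying Proposition~\ref{prop:product}, and the identification of the index-$2^d$ translation subgroup $T^d\cong\Z^d$ --- all of which are accurate.
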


\begin{proof}
Take a direct product $D_\infty^d$ and use Proposition \ref{prop:product}.
\end{proof}
 
\begin{remark}
Theorem \ref{dihedral} shows that virtually abelian groups may be $\ell^1$ noise sensitive. Informally, this is due to the noise sensitivity of the action of the finite quotient on the torsion free subgroup. It is obvious that if we are given a trajectory in the streets of New York by a sequence of moves North-South-East-West and we misread one instruction, we will still end up close to the aim. However if the instructions are given in terms of Forward-Backward-Left-Right and we miss a turn, we will most likely end up very far from the aim. It would be interesting to understand precisely when a virtually abelian group is noise sensitive or not.
\end{remark}

\subsection{Couplings of (lazy or not) simple random walks on $\Z$}

We record here three lemmas that will be used in the proof of Theorem~\ref{dihedral}.  The first one is very classical and the other two are easy consequences.

\begin{lemma}\label{coups}
Let $S_n$, resp. $S_n'$, denote a (lazy or not) simple random walk on $\Z$ started at $0$, resp. at $x\sqrt{n}$. There exists $\s \in \{0,1\}$ such that
\[
\limsup_{n\to \infty}||\mathrm{law}(S_n)-\mathrm{law}(S'_n-\s)||_1 \underset{x\to 0}{\longrightarrow}0.
\]
For lazy simple random walks, this holds true with $\s=0$.
\end{lemma}

\begin{proof} Let us start with the case of (non lazy) simple random walk.
We assume $x\sqrt{n}$ is even and take $\s=0$. The case $x\sqrt{n}$ odd and $\s=1$ is similar. We consider the following coupling of $S_n$ and $S'_n$ : while $S_k\neq S'_k$, sample the increments independently, once $S_k=S'_k$ take the same increments. This ensures that $S_k=S'_k$ for any $k \ge T=\min\{k: S_k=S'_k\}$. We have $||\rm{law}(S_n)-\rm{law}(S'_n-\s)||_1\le \P(T>n)$. 

Now while $k\le T$, the random variable $S_k'-S_k$ has the law of a random walk with step distribution $\frac{1}{2}\d_0+\frac{1}{4}\d_2+\frac{1}{4}\d_{-2}$. Then $\P(T>n)$ is the probability that such a random walk, starting at the even integer $x\sqrt{n}$, does not hit $0$ by time $n$. The later tends to $0$ with $x$ by \cite[Theorem~2.13]{Rev}.

For lazy simple random walks, the random variable $S_k'-S_k$ has a step distribution charging positively each increment in $\{-2,-1,0,1,2\}$, and it can hit $0$ as above without parity issue.
\end{proof}

We will actually rather use the proof than the lemma, and we call the coupling above the standard coupling between two simple random walks on $\Z$ started at different positions.

\begin{lemma}\label{2Scoup}
Let $S_n$, resp. $S_n'$, denote a (lazy or not) simple random walk on $\Z$ started at $0$, resp. at $x\sqrt{n}$, and let $\delta\in \mathbb R$. There exists $\s \in \{0,1\}$ such that
\[
\limsup_{n\to \infty}||\rm{law}(S_n)-\rm{law}(S'_{(1+\delta)n}-\s)||_1 \longrightarrow 0
\]
when both $x$ and $\delta$ tend to $0$. This holds true with $\s=0$ for lazy simple random walks.
\end{lemma}

\begin{proof}
This is a consequence of the previous lemma. We consider only the (non lazy) simple random walk and assume that $\d>0$ and that both $x\sqrt{n}$ and $\d n$ are even. Similar arguments apply in the other cases. We first sample $S'_{\d n}$. By the central limit theorem, for any $\e>0$, there exists $A>0$ such that for $n$ large
\[
\P\left[|S'_{\d n}|\le A\sqrt{\d n}\right] \ge 1-\e.
\]
Conditioning on this event, we apply Lemma \ref{coups} to $S_n$ and $S_{\d n}'^{-1}S'_{(1+\d)n}$ which is starting at $x'\sqrt{n}$ with $x-A\d \le x' \le x+A\d$. The probability to achieve their standard coupling is greater than $1-\e$ as long as $x$ and $\d$ are small enough. It follows that the probability to couple $S_n$ and $S'_{(1+\delta)n}$ is at least $1-2\e$ for $n$ large.
\end{proof}

In order to ease notations, we write $[a\pm b]$ as a shortcut for the interval $[a-b,a+b]$.

\begin{lemma}\label{3Scoup}
Assume $\ell^+,\ell^- \in \left[\left(\frac{4}{9}\pm\d\right)n\right]$ and $\ell \in \left[(1\pm\d)n\right]$, then for any given integer $s$, 
\[
\limsup_{n\to \infty}\left\Vert \left(\frac{1}{2}\d_1+\frac{1}{2}\d_{-1} \right)^{\ast \ell^+}\ast \left(\frac{1}{2}\d_0+\frac{1}{4}\d_1+\frac{1}{4}\d_{-1} \right)^{\ast \ell^-}- \d_s\ast\left(\frac{1}{3}\d_0+\frac{1}{3}\d_{1}+\frac{1}{3}\d_{-1}\right)^{\ast\ell}\right\Vert_1 \underset{\d\to0}{\longrightarrow} 0.
\]
\end{lemma}
We could replace $s$ by $x\sqrt{n}$ with $x$ tending to $0$ but this will not be necessary. There is no shift $\s\neq 0$ because of the lazy aspect of the random walk.

\begin{proof} Fix $\e>0$. Set $\mu_{\mathrm{S}}=\frac{1}{2}\d_1+\frac{1}{2}\d_{-1}$ and $\mu_{\mathrm{LS}}=\frac{1}{3}\d_0+\frac{1}{3}\d_1+\frac{1}{3}\d_{-1}$. Then
\[
\left(\frac{1}{2}\d_1+\frac{1}{2}\d_{-1} \right)^{\ast \ell^+}\ast \left(\frac{1}{2}\d_0+\frac{1}{4}\d_1+\frac{1}{4}\d_{-1} \right)^{\ast \ell^-}=\mu_{\mathrm{S}}^{\ast \ell^+}\ast \left( \frac{3}{4}\mu_{\mathrm{LS}}\right)^{\ast \ell^-}=\mu_{\mathrm{S}}^{\ast \ell^+}\ast \left( \mu_{\mathrm{LS}}\right)^{\ast m}
\]
with $1-\e \le \P\left[ m \in \left[ \left(\frac{3}{4}\pm \d\right)\ell^- \right] \right]\le \P\left[ m \in \left[ \left(\frac{1}{3}\pm 2\d\right)n \right] \right]$ for $n$ large. Similarly we have
\[
\d_s\ast \mu_{\mathrm{LS}}^{\ast \ell}=\d_s\ast\mu_{\mathrm{LS}}^{\ast\frac{2}{3}n}\ast \mu_{\mathrm{LS}}^{\ast \ell-\frac{2}{3}n}=\d_s\ast\left(\frac{2}{3}\mu_{\mathrm{S}}\right)^{\ast \frac{2}{3}n} \ast \mu_{\mathrm{LS}}^{\ast \ell-\frac{2}{3}n}=\d_s\ast\mu_{\mathrm{S}}^{\ast m'} \ast \mu_{\mathrm{LS}}^{\ast \ell-\frac{2}{3}n}
\]
with $1-\e \le \P\left[ m' \in \left[ \left(\frac{2}{3}\pm \d\right)\frac{2}{3} n\right] \right]\le \P\left[ m' \in \left[ \left(\frac{4}{9}\pm \d\right)n \right] \right]$ for $n$ large. Under these generic conditions on $m$ and $m'$, Lemma~\ref{2Scoup} gives a probability no less than $1-\e$ that the two random walks with respective laws $\mu_{\mathrm{S}}^{\ast \ell^+}$ and $\d_s\ast\mu_{\mathrm{S}}^{\ast m'}$ achieve a standard coupling (up to adding $1$), provided $\d$ is small enough, and $n$ large. Conditioning by such a success, Lemma~\ref{2Scoup}, applied to random walks of respective laws $\left( \mu_{\mathrm{LS}}\right)^{\ast m}$ and $\mu_{\mathrm{LS}}^{\ast \ell-\frac{2}{3}n}$ started at most $1$ apart, gives a probability  no less than $1-\e$ that the two considered random walks achieve coupling. We conclude that
\[
\left\Vert \left(\frac{1}{2}\d_1+\frac{1}{2}\d_{-1} \right)^{\ast \ell^+}\ast \left(\frac{1}{2}\d_0+\frac{1}{4}\d_1+\frac{1}{4}\d_{-1} \right)^{\ast \ell^-}- \d_s\ast\left(\frac{1}{3}\d_0+\frac{1}{3}\d_{1}+\frac{1}{3}\d_{-1}\right)^{\ast\ell}\right\Vert_1\le 4\e
\]
provided $\d$ is small enough and $n$ large.
\end{proof}

\subsection{Proof of Theorem~\ref{dihedral}}

 Let us now describe precisely the labelling of the Cayley graphs of $\Z$ and $D_\infty$, pictured in Figure~\ref{picture}.
In the integers, from each vertex there is an edge to the right labelled by $+1$ and and edge to the left labelled by $-1$. In the dihedral group, let us say that the words $(ab)^k$ correspond to even positions and the words $(ab)^ka$ to odd positions. At each even position, there is an edge to the left labelled by $a$ and an edge to the right labelled by $b$. At each odd position, this is the converse.

\begin{figure}
\begin{centering}
\begin{tikzpicture}

\node[draw,circle] (-2)at(-6,0) {$\begin{array}{c}ba\\ -2\end{array}$};
\node[draw,circle] (-1)at(-3,0) {$\begin{array}{c}b\\ -1\end{array}$};
\node[draw,circle] (0)at(0,0) {$\begin{array}{c}e\\ 0\end{array}$};
\node[draw,circle] (1)at(3,0) {$\begin{array}{c}a\\ 1\end{array}$};
\node[draw,circle] (2)at(6,0) {$\begin{array}{c}ab\\ 2\end{array}$};

\draw[->,>=latex, thick, blue] (0) to[bend left] (1);
\draw[blue] (1.5,1.1) node{$\begin{array}{c}a\\ +1\end{array}$};
\draw[->,>=latex, thick, blue] (1) to[bend left] (0);
\draw[blue] (1.5,-1.1) node{$\begin{array}{c}a\\ -1\end{array}$};

\draw[->,>=latex, thick, blue] (-2) to[bend left] (-1);
\draw[blue] (-4.5,1.1) node{$\begin{array}{c}a\\ +1\end{array}$};
\draw[->,>=latex, thick, blue] (-1) to[bend left] (-2);
\draw[blue] (-4.5,-1.1) node{$\begin{array}{c}a\\ -1\end{array}$};

\draw[->,>=latex, thick, red] (-1) to[bend left] (0);
\draw[red] (-1.5,1.1) node{$\begin{array}{c}b\\ +1\end{array}$};
\draw[->,>=latex, thick, red] (0) to[bend left] (-1);
\draw[red] (-1.5,-1.1) node{$\begin{array}{c}b\\ -1\end{array}$};

\draw[->,>=latex, thick, red] (1) to[bend left] (2);
\draw[red] (4.5,1.1) node{$\begin{array}{c}b\\ +1\end{array}$};
\draw[->,>=latex, thick, red] (2) to[bend left] (1);
\draw[red] (4.5,-1.1) node{$\begin{array}{c}b\\ -1\end{array}$};

\draw[blue] (7,0) node{$\cdots$};
\draw[red] (-7,0) node{$\cdots$};
\end{tikzpicture}

\par\end{centering}

\caption{\label{picture} Cayley graph of $D_\infty$ and identification with $\Z$.}
\end{figure}

\begin{proof}[Proof of the first part of Theorem \ref{dihedral}]
For the simple random walk, the key observation is that position $X_n$ and time $n$ have the same parity. The path parametrized by $X_n$ corresponds to a random walk on the integers where the increments at odd times are $+1$ for $a$ and $-1$ for $b$, and the increments at even times are $+1$ for $b$ and $-1$ for $a$. This provides a simple random walk on the integers, which is neither $\ell^1$-noise sensitive nor noise sensitive in average word distance by Proposition~\ref{prop:abelian}.
\end{proof}

For lazy random walks, time and position no longer have the same parity.
Let us first observe the effect of refreshing one increment. Denote $X_n=v_0sv_1$ and $Y_n=v_0rv_1$ with $s,r$ independent, $\mu$-distributed in $\{a,b,e\}$. By abuse of notation, we denote $X_n=v_0+s+v_1$ the corresponding path in the integers, as above. 

When $s\in \{a,b\}$ and $r=e$ (or vice-versa), it corresponds in the integers to $s=\pm 1$ (the sign depends on the parity of $v_0$) being replaced by $r=0$ (or vice-versa). Thus $v_0s$ and $v_0r$ have different parity, so the next moves of the random walk, described by the word $v_1$, will be mirrored of the moves in the original walk. We write $Y_n=v_0+r-v_1$.

When $s=a$ and $r=b$ (or vice-versa), then $s=\pm 1$ (depending on $v_0$) is replaced by $r=\mp 1$. The parity is not modified and we write $Y_n=v_0+r+v_1$.

Now denote $X_n=v_0s_{j_1} v_1 s_{j_2} \dots s_{j_m}v_{m}$ a lazy simple random walk where $s_{j_i}$ are the increments to be refreshed and $v_i=s_{j_i+1}\dots s_{j_{i+1}-1}$ are words. With the notations above, it corresponds to a sum
\begin{equation}\label{eq:Xdih}
X_n=v_0+s_{j_1}+ v_1+s_{j_2}+v_2 \dots +s_{j_m}+v_{m}
\end{equation}
in the integers. The refreshed sample $Y_n^\r=v_0r_{j_1}v_1\dots r_{j_m}v_{m}$ corresponds to a sum
\begin{equation}\label{eq:Ydih}
Y_n^\r=\a_0v_0+r_{j_1}+\a_1 v_1+r_{j_2}+\a_2v_2 \dots +r_{j_m}+\a_{m} v_{m}
\end{equation}
where $\a_i =\pm1$ are given by $\a_0=1$ and $\a_{i}=-\a_{i-1}$ if and only if $s_i \in \{a,b\}$ and $r_i=e$ or vice-versa, i.e. there is a change of parity, which occurs with probability $4/9$. This is summarised in Table~\ref{table}.

\begin{figure}
\begin{centering}
\begin{tabular}{| l | l |c|c|c|c|c|c|c|c|c|}
\hline
  & & $L_0$ & \multicolumn{4}{c|}{$\a_i=-\a_{i-1}$} & \multicolumn{4}{c|}{$\a_i=\a_{i-1}$} \\
\hline 
 & $s_i\in D_\infty$ & $e$ & $a$  & $b$ & $e$ & $e$ & $a$ & $a$ & $b$ & $b$ \\
 \cline{2-11}
  & $r_i\in D_\infty$ & $e$ &$e$  & $e$ & $a$ & $b$ & $a$ & $b$ & $a$ & $b$ \\
   \hline
 even position  & $s_i\in \Z$ & $0$ & $1$  & $-1$ & $0$  & $0$  & $1$ & $1$ & $-1$ & $-1$ \\
    \cline{2-11}
    & $r_i\in \Z$ & $0$  & $0$   & $0$  & $1$ & $-1$ & $1$ & $-1$ & $1$ & $-1$ \\
     \hline
 odd position   & $s_i\in \Z$ & $0$  & $-1$  & $1$ & $0$  & $0$  & $-1$ & $-1$ & $1$ & $1$ \\
    \cline{2-11}
      & $r_i\in \Z$ & $0$  &$0$   & $0$  & $-1$ & $1$ & $-1$ & $1$ & $-1$ & $1$ \\
\hline 
\end{tabular}
\par\end{centering}

\caption{\label{table}Correspondence of the effect of noising in the dihedral and integer model. The position is the integer $v_0+ \dots +s_{j_{i-1}}+v_{i-1}$, as in (\ref{eq:Xdih}).}
\end{figure}

At this stage, we can get an intuition of the result because the sum $\sum r_j$ is actually independent of the sum $\sum s_j$ and the Lyapunov central limit theorem applied to the random variables $\pm v_i$ ensures that the sums $\sum v_i$ and $\sum \a_i v_i$ are essentially independent. Making this precise would prove noise sensitivity at large scale. 

However, to obtain $\ell^1$ convergence, we need to construct a coupling between $(X_n,Y_n^\r)$ and $(X_n,X'_n)$. For this, we will use the additive model of expressions (\ref{eq:Xdih}) and (\ref{eq:Ydih}). 
\begin{proof}[Proof of the second part of Theorem \ref{dihedral}]
Let us first explain how we sample $(X_n,Y_n^\r)$ in the integer model (\ref{eq:Xdih},\ref{eq:Ydih}). We first sample the locations $L$ of the refreshed increments, according to a Bernoulli law of parameter $\r$. Among $L$, we sample the locations $L_0$ where $(s_i,r_i)=(e,e)$ (or $(s_i,r_i)=(0,0)$ in the integer model), according to a Bernoulli law of parameter $\frac{1}{9}$. Then we sample the $v_i$, i.e. the increments $s_i$ for $i$ not in $L$ according to $\frac{1}{3}(\d_e+\d_a+\d_b)$. Then we sample the values of $\a_i$ according to a Bernoulli law of parameter $\frac{1}{2}$. This partitions $L\setminus L_0$ into $L_\a^+ \sqcup L_\a^-$. Finally, we sample the pairs $(s_i,r_i)$ according to $\frac{1}{4}(\d_{(1,-1)}+ \d_{(1,1)}+\d_{(-1,1)}+\d_{(-1,-1)})$  for $i$ in $L_\a^+$ and according to 
$\frac{1}{4}(\d_{(1,0)}+ \d_{(-1,0)}+\d_{(0,1)}+\d_{(0,-1)})$ for $i$ in $L_\a^-$. In view of Table~\ref{table}, this yields a sample of $(X_n,Y_n^\r)$ in the integer model.

Now we explain how to couple $(X_n,Y_n^\r)$ with $(X_n,X'_n)$. More precisely, we will couple the conditioned law $(Y_n^\r|X_n)$ with $X_n'$ (which is equal to $(X_n'|X_n)$ by independence) on a large enough subset.

We first sample the subsets $L$ and $L_0$, thus we have integer expressions of $X_n$ and $Y_n^\r$ as in (\ref{eq:Xdih},\ref{eq:Ydih}). We will mimic these expression for $X'_n$ and write it in the form
\[
X'_n=v_0'+s'_{j_1}+v'_1+s'_{j_2}+v'_2 \dots +s'_{j_m}+v'_{m}+\sum_{i\in L_0} s'_i
\]
where $v'_i=\sum_{i \in \{j_i+1,\dots,j_{{i+1}-1}\}\setminus L_0}s'_i$. In other words, we pull out the terms in $L_0$. Then we will call length of $v_i$, and denote by $\ell(v_i)$, the cardinal of the set $\{j_i+1,\dots,j_{i+1}-1\}\setminus L_0$. This choice makes sure that $\ell(v_i)=\ell(v_i')$ for all $i$. This length follows a geometric law of parameter $\frac{1-\frac{8}{9}\r}{1-\frac{1}{9}\r}$ (taking into account the specificity of $L_0$). It follows that for any integer $k\ge 0$ there is a constant $c_k>0$ such that for any $\d>0$
\begin{align}\label{c_k}
\P\left[\#\{i: \ell(v_i)=k\} \in \left[(c_k\pm\d)n\right]\right] \underset{n\to \infty}{\longrightarrow} 1.
\end{align}

In order to construct a coupling between $(X_n,Y_n^\r)$ and $(X_n,X_n')$, we will take advantage of commutativity of the addition of integers in order to decompose the conditioned process $(Y_n^\r|X_n)$ and $X_n'$ into a finite sequence of independent rescaled simple random walks, run during close times. For this, we fix an integer $K$ to be determined later and define:
\begin{align}\label{Xsl}
X_n^{\rm{long}}=\sum_{\{i :\ell(v_i)>K\}} v_i \quad\textrm{and}\quad  X_n^{\rm{short}}=\sum_{k=0}^K \sum_{\{i :\ell(v_i)\le K,  |v_i|=k\}} v_i =\sum_{k=0}^K k S_{m(k)}^{(k)}
\end{align}
and similarly
\begin{align}\label{Ysl}
Y_n^{\rm{long}}=\sum_{\{i :\ell(v_i)>K\}} \alpha_iv_i   \quad\textrm{and}\quad Y_n^{\rm{short}}=\sum_{k=0}^K \sum_{\{i :\ell(v_i)\le K,  |v_i|=k\}} \a_iv_i= \sum_{k=0}^K k S_{m(k)}''^{(k)} 
\end{align}
where $m(k)$ is the cardinal of the subset $\{i :\ell(v_i)\le K,  |v_i|=k\}$. The sum of the $v_i$'s on the latter subset has the law of a simple random walk, which we denote $S_{m(k)}^{(k)}$, rescaled by the multiplicative factor~$k$. As the $\a_i$'s are independent uniform on $\{\pm1\}$, the sum of the $\a_iv_i$'s indexed by this very subset is an  independent simple random walk $S_{m(k)}''^{(k)}$.
We also denote 
\[
X_n^{\a+}=\sum_{j\in L_\a^+} s_j,  \quad  X_n^{\a-}=\sum_{j\in L_\a^-} 
s_j,  \quad  Y_n^{\a+}=\sum_{j\in L_\a^+} 
r_j,  \quad  Y_n^{\a-}=\sum_{j\in L_\a^-} 
r_j,
\]
in order to have
\[
X_n=X_n^{\rm{long}}+X_n^{\rm{short}}+X_n^{\a+}+X_n^{\a-} \quad \textrm{and} \quad Y_n^\r=Y_n^{\rm{long}}+Y_n^{\rm{short}}+Y_n^{\a+}+Y_n^{\a-}.
\]
Similarly we decompose $X'_n=X_n'^{\rm{long}}+X_n'^{\rm{short}}+X_n'^{\a+}+X_n'^{\a-}+X_n'^{0}$ into
\[
X_n'^{\rm{long}}=\sum_{\{i :\ell(v'_i)>K\}} v'_i, \quad  X_n'^{\rm{short}}=\sum_{k=0}^K \sum_{\{i :\ell(v'_i)\le K,  |v'_i|=k\}} v_i =\sum_{k=0}^K k S_{m'(k)}'^{(k)},
\]
and
\[
X_n'^{\a+}=\sum_{j \in L_\a^+} s_j', \quad X_n'^{\a-}=\sum_{j \in L_\a^-} s_j', \quad X_n'^0=\sum_{j\in L_0}s_j',
\]

Using (\ref{c_k}) and the distributions of the $v_i$'s conditioned by their lengths, we obtain for each $k\ge 0$ a constant $c'_k>0$ such that for any $\d>0$ and $n$ large
\begin{align}\label{m(k)}
\P\left[m(k) \in \left[ (c'_k\pm\d)n \right] \right] \underset{n\to \infty}{\longrightarrow} 1
\end{align}
and the same holds for $m'(k)$. Observe that $c'_1>c_1$. Also note that by (\ref{c_k}) and tail decay of the geometric law, there exists a sequence $\e_K\underset{K\to \infty}{\longrightarrow}0$ such that
\begin{align}\label{ellK}
\P\left[\ell(X_n^{\rm{long}}) \le \e_Kn \right]\underset{n\to \infty}{\longrightarrow} 1
\end{align}
where $\ell(X_n^{\rm{long}})=\sum_{i:\ell(v_i)>K}\ell(v_i)$ is the length of $X_n^{\rm{long}}$ as a word (we removed the trivial increments in $L_0$).

We are now ready to describe the required coupling. We fix some $\e>0$. By Lemma~\ref{2Scoup}, there exists $x,\d>0$ such that any two simple random walks started distance $\le x\sqrt{n}$ apart and run for times in $[(1\pm\d)n]$ achieve a standard coupling with probability at least $1-\e$, provided $n$ is large enough. Moreover by the central limit theorem applied to a (lazy or not) simple random walk $S_n$, there exists $A>0$ such that for $n$ large
\[
\P\left[|S_n| \le A\sqrt{n} \right] \ge 1-\e.
\]
We choose $K$ large enough that $\frac{4A\sqrt{\e_K}}{c_1}\le x$. The central limit theorem and (\ref{ellK}) show that 
\[
\P\left[|X_n^{\rm{long}}|\le A\sqrt{\e_K n} \right]\ge 1-\e
\]
and the same inequality holds with $Y_n^{\rm{long}}$ or $X_n'^{\rm{long}}$ in place of $X_n^{\rm{long}}$. Let $B_0=\{|Y_n^{\rm{long}}|>A\sqrt{\e_Kn}\}\cup\{|X_n'^{\rm{long}}|>A\sqrt{\e_Kn}\}$ denote the ``bad" set at step $0$. We have $\P(B_0)\le 2\e$. 

We also choose $\d_1>0$ such that $\frac{\d_1}{c_1}\le \d$. We set $B_1=\{m(1) \notin [(c_1'\pm\d_1)n]\} \cup \{m'(1) \notin [(c_1'\pm\d_1)n]\}$.  Applying (\ref{m(k)}) with $k=1$, we have $\P(B_1)\le \frac{2\e}{K}$ provided $n$ is large enough.

Conditioned on $X_n$ and on $B_0^\complement \cap B_1^\complement$, the random variables $Y_n^{\rm{long}}+S_{m(1)}''^{(1)}$ and $X_n'^{\rm{long}}+S_{m'(1)}'^{(1)}$ are simple random walks started distance $|Y_n^{\rm{long}}-X_n'^{\rm{long}}|\le 2A\sqrt{\e_K}\sqrt{n}$ apart and run for times $m(1),m'(1) \in [(c_1'\pm\d_1)n]\subset [(1\pm\d)c'_1n]$ by choice of $\d_1$. The choice of $K$ permits to use Lemma~\ref{2Scoup} and we obtain for $n$ large
\begin{align}\label{G_1}
\P\left[|Y_n^{\rm{long}}+S_{m(1)}''^{(1)}-(X_n'^{\rm{long}}+S_{m'(1)}'^{(1)})|\le 1 \left| B_0^\complement \cap B_1^\complement, X_n\right. \right] \ge 1-\e.
\end{align}

Now we will use Lemma~\ref{2Scoup} repeatedly as follows. For $\e'=\frac{\e}{K}>0$, there exists $\d'>0$ such that two simple random walks started distance at most $1$ apart and run for times in $[(1\pm\d')n]$ achieve a standard coupling with probability at least $1-\e'$. For each $2\le k \le K$, we choose $\d_k>0$ with $\frac{\d_k}{c'_k}\le \d'$. We denote $B_k$ the ``bad'' event that $\{m(k),m'(k)\}$ is not included in $[(c'_k\pm\d_k)n]$. For $n$ large enough, we have $\P(B_k) \le 2\e'$ by (\ref{m(k)}), so the ``bad'' event $B=\cup_{k=0}^K B_k$ has probability $\P(B)\le 4\e$ and thus $\P(B^\complement) \ge 1-4\e$.

Let us denote $G_k$ the ``good'' event at step $k$, still conditioned by $X_n$, that 
\[
G_k=\left\{ \left| Y_n^{\rm{long}}+S_{m(1)}''^{(1)}+ \dots +kS_{m(k)}''^{(k)}-\left(X_n'^{\rm{long}}+S_{m'(1)}'^{(1)}+\dots+kS_{m'(k)}'^{(k)}\right) \right|\le 1 \left| X_n \right. \right\}
\]

Inequality (\ref{G_1}) implies $\P(G_1|B^\complement, X_n) \ge 1-\e$.
We also have:
\[
 \forall 2 \le k\le K, \quad \P(G_k|B^\complement, G_{k-1},X_n)\ge 1-\e'.
 \] 
Indeed under the conditions $X_n$, $G_{k-1}$ and $B^\complement$, the event $G_k$ has the probability that two simple random walks started at most $1$ apart and run for times $m(k), m'(k) \in [(c'_k\pm\d_k)n] \subset \left[ (1\pm\d')c'_kn \right]$ achieve a standard coupling, which holds for $n$ large with probability no less than $1-\e'$ by choice of $\d'$. It follows that 
\begin{align}\label{GK}
\P(G_K|B^\complement, X_n) \ge 1-\e-(K-1)\e'\ge 1-2\e.
\end{align}

There remains to consider the contributions of the pairs $(s_i,r_i)$ for $i \in L\setminus L_0$ and of $s_i'$ for $i\in L$. Note that as the values of $\a_i$'s have been sampled (which guaranteed the independence of $S_{m(k)}^{(k)}$ and $S_{m(k)}''^{(k)}$), the locations $L_\a^+$ and $L_\a^-$ are determined already.  For some $\d_\a>0$, let us denote
\[
B_\a=\left\{\#L_\a^+ \notin \left[\left(\frac{4\r}{9}\pm\d_\a\right)n \right]\right\}\cup\left\{\#L_\a^- \notin \left[\left(\frac{4\r}{9}\pm\d_\a\right)n \right]\right\}\cup\left\{\#L_0 \notin \left[\left(\frac{\r}{9}\pm\d_\a\right)n \right]\right\}
\]
the ``bad'' event that the partition $L =L_\a^+\sqcup L_\a^- \sqcup L_0$ is not ``generic''. We have for any $\d>0$ and $n$ large that $\P(B_\a^\complement)\ge 1-\e$.

According to Table~\ref{table}, the laws of $Y_n^\a+$ and $Y_n^{\a-}$ conditioned by $B_\a^\complement,X_n,L_\a^+,L_\a^-$ are $\left(\frac{1}{2}\d_1+\frac{1}{2}\d_{-1}\right)^{\ast \#L_\a^+}$ and $\left(\frac{1}{2}\d_0+\frac{1}{4}\d_{1}+\frac{1}{4}\d_{-1}\right)^{\ast \#L_\a^-}$ respectively. The law of $X_n'^{\a+}+X_n'^{\a-}+X_n'^0$ is $\left(\frac{1}{3}\d_0+\frac{1}{3}\d_{1}+\frac{1}{3}\d_{-1}\right)^{\ast \#L}$. 

Note that $G_K=\left\{ \left| Y_n^{\rm{long}}+Y_n^{\rm{short}}-\left(X_n'^{\rm{long}}+X_n'^{\rm{short}}\right) \right|\le 1 \left| X_n \right. \right\}$. We apply Lemma~\ref{3Scoup} with $|\s|\le 1$, and obtain, provided $\d_\a$ is small enough and $n$ large,
\[
\P\left[Y_n^\r=X_n'\left| G_K,B_\a^\complement,X_n \right. \right]\ge 1-\e.
\]
Combined with (\ref{GK}), we get
\[
\P\left[Y_n^\r=X_n'\left| B^\complement,B_\a^\complement,X_n \right. \right]\ge \P\left[Y_n^\r=X_n'\left| G_K,B^\complement,B_\a^\complement,X_n \right. \right]\P\left[G_K\left| B^\complement,B_\a^\complement,X_n \right. \right] \ge 1-3\e.
\]
Integrating over the condition $X_n$, we get $\P\left[(X_n,Y_n^\r)=(X_n,X_n')\left| B^\complement,B_\a^\complement\right. \right]\ge 1-3\e$. Finally as $\P\left(B^\complement\right)\le 4\e$ and $\P\left(B_\a^\complement\right)\le \e$, we obtain for $n$ large that $\P\left[ (X_n,Y_n^\r)=(X_n,X_n')\right] \ge 1-8\e$
\end{proof}

Observe that in the proof above the parity issue between $X_n$ and $Y_n^\r$ can only be solved using the final ``lazy'' $Y_n^{\a-}$ part, because $X_n^{\rm{long}}$, respectively $X_n^{\rm{short}}$, $X_n^{\a+}$, has the same parity as $Y_n^{\rm{long}}$, respectively $Y_n^{\rm{short}}$, $Y_n^{\a+}$.

\section{Wreath products}\label{sec:wr}

Let $G$ and $\Lambda$ be two groups. Assume $G$ acts transitively on a set $S$. The permutational wreath product of $G$ and $\Lambda$ over $S$ is the semi-direct product
\[
\Lambda \wr_S G:=\left(\bigoplus_S \Lambda\right) \rtimes G.
\]
Its elements are pairs $(f,g)$ where $f:S \to \Lambda$ is finitely supported and $g$ belongs to $G$. The action of $G$ on finitely supported functions is by translations $gf(\cdot)=f(\cdot g)$. Given an arbitrary point $o$ in $S$, a natural generating set is the union of elements $(\id,g)$ for $g$ in some generating set of $G$ together with elements $(\l\delta_o,\id)$ for $\l$ in some generating set of $\Lambda$ where $\l\delta_o(x)=\l$ if $x=o$ and $\l\delta_o(x)=\id$ otherwise.

When $S=G$ is acted upon by the right regular representation, we recover the (usual) wreath product and we simply denote it by $\Lambda \wr G$.

\subsection{The lamplighter group}\label{sec:lamplighter}

The lamplighter group is the wreath product $\Z/2\Z \wr \Z$. Its elements are pairs $(f,t)$ where $t$ is an integer and $f:\Z\rightarrow \Z/2\Z$ is a finitely supported function. As the action is by shift on $\Z$, the product is $(f,t)(f',t')=(f(\cdot)+f'(\cdot+t),t+t')$. One can think of $t$ as a position of a lighter and $f$ as a space of configurations of lamps on or off. The group is generated by $(0,\pm1)$, which correspond to moves of the lighter, and $(1\delta_0,0)$, where $1\delta_0(x)$ takes value $1$ for $x=0$ and value $0$ otherwise, which corresponds to switching on or off the lamp at the lighter's position. Let $\mu_1$ be equidistributed on $\{(0,0),(1\d_0,0)\}$ and $\mu_2$ be equidistributed on $(0,\pm1)$, then the measure $\mu:=\mu_1 \ast \mu_2 \ast \mu_1$ is called the "switch-walk-switch" measure.

\begin{theorem}\label{thm:lamplighter}
The lamplighter group with switch-walk-switch measure is entropy noise sensitive and partially  noise sensitive in average word distance.
\end{theorem}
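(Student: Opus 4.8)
The plan is to exploit the switch-walk-switch structure, under which a single $\mu$-increment consists of a fair switch of the lamp at the current position, a fair $\pm 1$ move of the lighter, and a second fair switch of the lamp at the new position. Writing $X_n=(f_n,t_n)$, the lighter performs an ordinary simple random walk $(t_k)$ on $\Z$, and conditionally on its whole trajectory the final configuration $f_n$ is a product of independent fair bits, one at each visited site (every visited site carries at least one switch). Hence $\H(f_n\mid\text{trajectory})=\log 2\cdot\E[R_n]$ exactly, where $R_n$ is the range; since $f_n$ is supported on an interval of length $R_n-1$ and $\H(t_n)=O(\log n)$, I would first record the standard fact
\[
\H(X_n)\sim \log 2\cdot \E[R_n]\asymp \sqrt n .
\]

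For entropy noise sensitivity the bound $\H(Y_n^\r\mid X_n)\le \H(Y_n^\r)=\H(X_n)$ is automatic, so only the matching lower bound is needed. Here I would condition on the data $W$ consisting of everything that determines both walks and their switches \emph{except} the resampled switch values at refreshed steps: the refresh pattern, all move increments (original and resampled), the whole $Y$-trajectory, and all original switch values. Given $W$ the variable $X_n$ is determined, and each lamp of $Y_n^\r$ at a site $j$ equals a constant depending on $W$ XORed with the fresh fair bits coming from the refreshed $Y$-switches at $j$; so for every $j$ that the $Y$-walk visits with at least one refreshed switch, $f_n^Y(j)$ is a fair bit, mutually independent and independent of $X_n$. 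This yields
\[
\H(Y_n^\r\mid X_n)\ge \H(f_n^Y\mid X_n,W)=\log 2\cdot \E\big[\#\{j:\text{the }Y\text{-walk has a refreshed switch at }j\}\big].
\]
Writing $\tilde L_j$ for the $Y$-local time at $j$ (of the same law as a length-$n$ simple random walk local time), the complementary ``frozen'' sites contribute $\E\sum_j(1-\r)^{2\tilde L_j}\mathbf{1}_{\tilde L_j\ge 1}$, and I would show this is $o(\sqrt n)$, so the right-hand side is $(1-o(1))\log 2\cdot\E[R_n]=(1-o(1))\H(X_n)$ and the ratio tends to $1$.

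Partial noise sensitivity in average distance follows from the \emph{same} core count. Using $d((f,t),(f',t'))\ge|\supp(f-f')|$ one gets $\E d(X_n,Y_n^\r)\ge\E[\#\{j:f_n^X(j)\ne f_n^Y(j)\}]$. Conditioning again on all data except the resampled switch values, any refreshed $Y$-switch at $j$ is a fresh fair bit that does not enter $f_n^X(j)$ (which uses only original switch values); thus $f_n^Y(j)$ is fair and independent of $f_n^X(j)$, so they disagree with probability exactly $\tfrac12$. Hence $\E d(X_n,Y_n^\r)\ge\tfrac12\,\E[\#\{j:\text{the }Y\text{-walk has a refreshed switch at }j\}]\ge\tfrac12(1-o(1))\E[R_n]$. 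Since $\E d(X_n,X_n')\le 2\,\E d(e,X_n)=O(\E[R_n])$, the ratio is bounded below by a constant, and crucially the constant $\tfrac12$ does not depend on $\r$, which is precisely what ``partial'' requires.

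The main obstacle I anticipate is the local-time estimate that frozen sites are negligible: concretely that $\E\#\{j:1\le L_j\le K\}=O(K)$ for simple random walk on $\Z$, equivalently $\E\sum_j(1-\r)^{2L_j}\mathbf{1}_{L_j\ge1}=O(1/\r)=o(\sqrt n)$. This is exactly what upgrades ``a fixed positive fraction of range sites carry a refreshed switch'' (enough for the distance statement, since central sites have $L_j\asymp\sqrt n\to\infty$ and so carry a refreshed switch with high probability for every fixed $\r$) to ``all but $o(\sqrt n)$ range sites do'' (needed for the full ratio $\to 1$ in entropy). Heuristically the local-time profile rises roughly linearly from each edge of the range to its bulk value $\asymp\sqrt n$, so $\E\#\{L_j=k\}$ stays bounded for $k\lesssim\sqrt n$; I would make this rigorous via the reflection principle and an excursion decomposition, or via Ray--Knight type occupation-time estimates. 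Everything else is bookkeeping once this estimate and the standard facts $\E[R_n]\asymp\sqrt n$ and $\E d(e,X_n)\asymp\sqrt n$ are in hand.
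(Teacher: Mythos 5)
Your proposal is correct and follows essentially the same route as the paper: both arguments reduce the theorem to showing that all but a vanishing fraction (resp.\ a fixed fraction, for the distance statement) of the range sites of the projected walk on $\Z$ carry at least one refreshed switch, observe that the lamps at such sites are i.i.d.\ fair bits independent of $X_n$, and settle the counting step with a local-time estimate for simple random walk (the paper invokes the Ray--Knight theorem to show that few sites have local time below $\e\sqrt n$ and then uses homogeneity of the expected range, while you bound the expected number of ``frozen'' sites directly). The only caveat is that your claimed estimate $\E\#\{j: 1\le L_j\le K\}=O(K)$ could carry logarithmic corrections coming from sites discovered near the end of the walk, but since your argument only needs the frozen-site count to be $o(\sqrt n)$ for each fixed $\r$, this is harmless.
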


This group is not $\ell^1$-noise sensitive by Corollary~\ref{cor:surject}. 

\begin{proof}
Let $X_n=s_1\dots s_n=(f_n,x_n)$ denote a sample. The projection $x_n$ to the integers is a simple random walk. We denote 
\[
\Loc(x,n):=\{0\le t \le n \ : \ x_t=x\}
\]
the local time at $x$ (both the set and its cardinal are called local time by a slight abuse). Conditioned on the local time, the state of the lamp at $x$ is given by:
\[
f_n(x)=\prod_{t \in \Loc(x,n)} \a_t, \quad \textrm{where }\a_t \textrm{ are i.i.d. uniform in }\Z/2\Z.
\]
Let us denote $R_n:=\{ x \in \Z : \Loc(x,n)>0\}$ the range of the random walk projected to the integers. The entropy of $X_n$ is given by
\begin{align}\label{eq:entropyrange}
\H(X_n)=\E |R_n|+O(\log n)
\end{align}
because to describe $X_n$ we need to provide the lamp configuration on the range and the position (which has only logarithmic entropy). Note that we use logarithm in base $2$. It is well-known that $\E |R_n|\asymp \sqrt{n}$. Moreover, by gaussian decay, the expected size of range is homogeneous in the sense that for $n$ large, one has $\E\mathbbm{1}_A|R_n|/\E|R_n| \le f(\P(A))$ for some function $f(\e)\underset{\e\to 0}{\longrightarrow}0$.

Now consider a refreshed sample $Y_n^\r=r_1\dots r_n=(g_n,y_n)$. Again conditioning by the trajectory projected onto the integers, it appears that the conditional entropy satisfies 
\begin{align}\label{eq:condrange}
\H(Y_n^\r|X_n) \ge \E|R_n^{\textrm{ref}}(Y_n^\r)|,
\end{align}
 where $R_n^{\textrm{ref}}(Y_n^\r):=\left\{ x \in R_n(Y_n^\r) \ : \ \exists t \in \Loc(x,n), r_t=s'_t \textrm{ was refreshed}\right\}$,
because for $x$ in this set of refreshed lamps, the value $g_n(x)$ is independent of the sample $X_n$. Moreover, for a given $x$ in the range of $Y_n^\r$, the probability that the lamp is not refreshed is precisely $(1-\r)^{\Loc(x,n)}$, so our conditional entropy is related to the distribution of local time. The Ray-Knight theorem guarantees that
\[
\P\left[\#\left\{x \in R_n \ : \ \Loc(x,n)\le \d \sqrt{n} \right\} \le \d \sqrt{n} \right] \underset{\d \to 0}{\longrightarrow} 1.
\]
So for any $\e>0$, there exists an $\d>0$ such that the event
\[
A_\d:=\left\{\#\left\{x \in R_n(Y_n^\r) \ : \ \Loc(x,n)\ge \d\sqrt{n} \right\} \ge (1-\e)\left|R_n(Y_n^\r)\right|\right\}
\] 
has probability $\P(A_\d)\ge 1-\e$.
In this set each lamp is refreshed with probability at least $1-(1-\r)^{\d\sqrt{n}}\ge 1-\e$, provided $n$ is large enough, so:
\[
\P\left[ |R_n^{\textrm{ref}}(Y_n^\r)|\ge \left(1-2\e\right) \#\left\{x \in R_n(Y_n^\r) \ : \ \Loc(x,n)\ge \d\sqrt{n} \right\} \right] \underset{n \to \infty}{\longrightarrow} 1.
\]
It follows that conditioned on $A_\d$ we have for $n$ large:
\[
\P\left[ |R_n^{\textrm{ref}}(Y_n^\r)|\ge \left(1-3\e\right)\left|R_n(Y_n^\r)\right| \left| A_\d \right.\right] \ge 1-\e.
\]
Denote $B$ the event that $ |R_n^{\textrm{ref}}(Y_n^\r)|\ge \left(1-3\e\right)\left|R_n(Y_n^\r)\right| $, we finally have $\P(B) \ge 1-2\e$ for $n$ large. We can compute:
\begin{align*}
\E|R_n^{\textrm{ref}}| = \E\mathbbm{1}_B|R_n^{\textrm{ref}}|+\E\mathbbm{1}_{B^\complement}|R_n^{\textrm{ref}}| \ge (1-3\e)\E\mathbbm{1}_B|R_n| \ge (1-3\e)(1-f(2\e))\E|R_n|.
\end{align*}
Together with (\ref{eq:entropyrange}) and (\ref{eq:condrange}), this implies entropy noise sensitivity.

The distance between two elements $(f,x)$ and $(g,y)$ is the minimal number of steps for the lighter to start from position $x$, switch all lamps at positions $t$ with $f(t)\neq g(t)$ and go to position $y$. In particular, $\E d(X_n,Y_n^\r) \ge \E|R_n^{\textrm{ref}}(Y_n^\r)|-o(\sqrt{n})\asymp \sqrt{n}$. This implies partial noise sensitivity in average word distance.
\end{proof}

\subsection{A lower bound for permutational wreath products} The ideas in the previous proof can be used in arbitrary permutational wreath products, but it is usually difficult to obtain informations about local times. We give a weaker statement which will be used in the next section.

Given an action of $G$ on $S$, recall that the inverted orbit of a point $x$ in $S$ under a word $w=s_1\dots s_n$ is the set 
\[
\O(w)=\left\{x,\ xs_1^{-1}, \ xs_{2}^{-1}s_1^{-1}, \ \dots, \ xs_n^{-1}\dots s_1^{-1}\right\}.
\] 
A switch-walk measure on $\L \wr_S G$ is a measure of the form $\mu_\L \ast \mu_G$ where $\mu_G$ is an arbitrary measure on $G$ and $\mu_\L$ is an arbitrary measure on the copy of $\L$ siting over a fixed point $o$ in $S$, namely $\L=\{\l\d_o : \l \in \L\}$. 

\begin{lemma}\label{lemma:Hinv}
Let $\L$ be a finite group. A switch-walk random walk on $\L \wr_S G$ satisfies
\[
\H(Y_n^\r | X_n) \ge \r \H(\mu_\L)\E\left|\O(X_n)\right|.
\]
\end{lemma}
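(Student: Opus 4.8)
The plan is to produce a lower bound by conditioning on enough auxiliary randomness that the only freedom left in $Y_n^\r$ is the switch values of the refreshed increments, and then to show that each refreshed site of the inverted orbit contributes an independent amount $\H(\mu_\L)$ of entropy. Write each increment as $s_k=\d_{\l_k}g_k$ with $\l_k$ distributed by $\mu_\L$ and $g_k$ by $\mu_G$, let $\xi_k\in\{0,1\}$ be the refreshing indicator (so $\P[\xi_k=1]=\r$), and $s_k'=\d_{\l_k'}g_k'$ the resampled increment. I would use repeatedly that conditioning on a \emph{finer} $\sigma$-algebra only decreases entropy, so that conditioning on extra data yields a valid \emph{lower} bound for $\H(Y_n^\r|X_n)$. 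Concretely I condition on $Z:=\big(\,(s_k)_{k\le n},\ (\xi_k)_{k\le n},\ (g_k')_{\xi_k=1}\,\big)$, which determines $X_n$, the whole walk skeleton $W=(\gamma_k)$ of $Y_n^\r$ (where $\gamma_k=g_k$ or $g_k'$ according to $\xi_k$) and all non-refreshed switches, leaving only the family $(\l_k')_{\xi_k=1}$, which is i.i.d.\ $\mu_\L$ and independent of $Z$. Since $\s(X_n)\subseteq \s(Z)$ this gives $\H(Y_n^\r|X_n)\ge \H(Y_n^\r|Z)$.

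I would then use the structural fact that, for a switch-walk measure, the lamp configuration of $Y_n^\r$ is supported on the inverted orbit $\O(Y_n^\r)$, each increment $k$ depositing its single switch at one site $q_k$, so that the sites partition $\{1,\dots,n\}$ into blocks $K_p=\{k:q_k=p\}$. As $W$ is a function of $Z$, so are $\O(Y_n^\r)$ and, for each site $p$, its last deposit $k_p=\max K_p$. The final lamp at $p$ is the time-ordered product $g_n(p)=c_p\,\tilde\l_{k_p}$, where $\tilde\l_k=\l_k$ if $\xi_k=0$ and $\tilde\l_k=\l_k'$ if $\xi_k=1$, and $c_p$ collects the earlier switches of $K_p$. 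Because the blocks $K_p$ are disjoint and the fresh switches independent, the $(g_n(p))_p$ are independent given $Z$. Put $D:=\{p\in\O(Y_n^\r):\xi_{k_p}=1\}$. For $p\in D$ the rightmost factor $\tilde\l_{k_p}=\l_{k_p}'$ is fresh and independent of $c_p$, so by invariance of entropy under left translation $\H(g_n(p)\,|\,Z)\ge \H(\l_{k_p}'\,|\,Z,c_p)=\H(\mu_\L)$. Summing the independent coordinates indexed by $D$ gives, for each realization of $Z$, $\H(Y_n^\r|Z=z)\ge |D|\,\H(\mu_\L)$, hence $\H(Y_n^\r|Z)\ge \H(\mu_\L)\,\E|D|$.

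It remains to check $\E|D|=\r\,\E|\O(X_n)|$, which is where the factor $\r$ appears. The crucial observation is that the refreshing indicators stay unbiased after conditioning on the walk skeleton: since the walk part of an increment is $\mu_G$-distributed whether or not it was refreshed, observing $W$ gives no information on $(\xi_k)$, so conditionally on $W$ the $\xi_k$ are again i.i.d.\ Bernoulli($\r$). As each $k_p$ is $W$-measurable and the $k_p$ are distinct, $\E\big[|D|\ \big|\ W\big]=\sum_{p\in\O(Y_n^\r)}\P[\xi_{k_p}=1\,|\,W]=\r\,|\O(Y_n^\r)|$. Finally $Y_n^\r=r_1\cdots r_n$ has i.i.d.\ $\mu$-increments, so $\O(Y_n^\r)$ has the same law as $\O(X_n)$ and $\E|\O(Y_n^\r)|=\E|\O(X_n)|$. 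Combining the three displays yields $\H(Y_n^\r|X_n)\ge \r\,\H(\mu_\L)\,\E|\O(X_n)|$; finiteness of $\L$ only serves to keep all entropies finite.

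I expect the main difficulty to be bookkeeping rather than conceptual: identifying the deposit sites with the inverted orbit in the chosen convention, and singling out one increment per site (the last deposit, which appears as the rightmost $\L$-factor) so that the non-commutativity of $\L$ is harmless. The one genuinely substantive step is the remark that conditioning on the walk skeleton does not bias the refresh indicators — this is exactly what produces the clean proportion $\r$, and it hinges on the walk marginal being $\mu_G$ irrespective of refreshing.
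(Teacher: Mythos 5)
Your proof is correct and follows essentially the same route as the paper: the paper's intermediate word $Y_n'$ (obtained by resampling only the $\mu_G$-factors of the refreshed increments, then refreshing the $\mu_\L$-factors) plays exactly the role of your conditioning variable $Z$, and both arguments hinge on the same key observation that the walk skeleton carries no information about the refresh indicators, so each site of the inverted orbit receives a fresh switch worth $\H(\mu_\L)$ of conditional entropy. The only cosmetic difference is the per-site accounting: the paper bounds the probability that a site $x$ of the inverted orbit is refreshed by $1-(1-\r)^{\Loc(x,n)}\ge\r$, while you single out the last deposit at each site to obtain the factor $\r$ exactly.
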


By slight abuse of notation, we write on the righthand side $\O(X_n)$ where we mean the inverted orbit of the inverse of a sample path (or word) of the random walk and not only its evaluation in the group.
Heuristically this lemma is simply a lower bound on the expected number of lamps refreshed.
In its own, it does not provide information on noise sensitivity because the righthand side depends on the noise parameter $\r$.

\begin{proof}
Let $X_n^{\word}=\l_1\d_{o}s_1\dots \l_n\d_{o}s_n$ denote a sample path of the random walk with evaluation $X_n=(f_n,g_n)$ in the group $\L \wr_S G$. The lamp at $x$ takes value 
\[
f_n(x)=\l_1\d_{o}(x)\l_2\d_{o}(xs_1)\dots \l_n\d_{o}(xs_1\dots s_n)=\prod_{t \in \Loc(x,n)} \l_t,
\]
where $\Loc(x,n):=\left\{0 \le t \le n \ : x.s_1\dots s_{t-1}=o \right\}$ and  $\l_t$ are independent of law $\mu_\L$.
Note that points with positive local time are precisely points on the inverted orbit of $X_n^{\word}$.  

Now to sample $Y_n^\r$, we first sample the locations of refreshed increments, then resample only the $\mu_G$ factors of the increments. This gives an intermediate word $Y_n'^{\word}$ whose evaluation $Y_n'$ in the group individually has the same law as $X_n$. Then we refresh the $\mu_\L$ factors of the increments. We have :
\[
\H(Y_n^\r|X_n) \ge \H(Y_n^\r|X_n^{\word}) \ge \H(Y_n^\r|X_n^{\word}, Y_n'^{\word})=\H(Y_n^\r| Y_n'^{\word}).
\]
The last equality is due to the fact that all the factors in the word $X_n^{\word}$ agree with the factors of $Y_m'^{\word}$ except those that will be resampled according to $\mu_\L$ independently to obtain $Y_n^\r$.

For $x$ in $S$, the lamp at $x$ is refreshed from $Y_n'^{\word}$ to $Y_n^\r$ with probability $1-(1-\r)^{\Loc(x,n)}$, which is $\ge \r$ as soon as $x$ has positive local time, i.e. is in the inverted orbit of $Y_n'^{\word}$. It follows that $\H(Y_n^\r|Y_n'^{\word})\ge \r \H(\mu_\L)\E|\O(Y_n'^{\word})|=\r \H(\mu_\L)\E\left|\O(X_n^{\word})\right|$.
\end{proof}

\subsection{Permutational wreath products of some groups acting on rooted trees}

Let $T=T_d$ be a rooted tree of degree $d$. We view it as the graph with vertex set $\sqcup_{\ell\ge 0} X^\ell$ where $X=\{1,\dots,d\}$ and edges between any two vertices of the form $v$ and $vx$ for $x \in X$. The root is the empty sequence obtained for $\ell=0$, and the set $X^\ell$ is called the $\ell$th level. The tree boundary is the set $\partial T=X^{\Z_+}$. The group $\Aut(T)$ of (rooted) automorphisms of $T$ is canonically isomorphic to its permutational wreath product with the symmetric group $\Sym_X$ over $X$
\begin{equation}\label{autT}
\Aut(T) \simeq \Aut(T) \wr_X \Sym_X.
\end{equation}
This is obtained by identification between $T$ and the subtrees $xT$ rooted at vertices 
$x$ in the first level. We identify an element and its image under this isomorphism to write $g=\langle g|_1,\dots,g|_{d}\rangle \s$ where $g|_x \in \Aut(T)$ is called the section at $x$ and $\s \in \Sym_X$ is the root permutation. The action of $g \in \Aut(T)$ on a vertex of the form $xu$ for $x \in X$ is given by $g.xu=\s.x g|_x u$. The product rule is
$gg'=\langle g|_1g|_{\s^{-1}(1)},\dots,g|_dg|_{\s^{-1}(d)}\rangle\s\s'$. This isomorphism can be iterated to the $\ell$th level as 
\[
\Aut(T) \simeq \Aut(T) \wr_X \Sym_X \wr_X \dots\wr_X\Sym_X \hookrightarrow \Aut(T) \wr_{X^\ell} \Sym_{X^\ell},
\]
and in particular, the section of $g$ at any vertex is defined by induction as $g|_{vx}=(g|_v)|_x$ 
We refer to \cite{GriNewH} or \cite{Nekbook} for details on the structure of $\Aut(T)$ and wreath product isomorphisms.

We consider a finitely generated subgroup $G=G(A,H)$ of $\Aut(T)$ with a generating set of the form $A \cup H$ where $A$ and $H$ are two subgroups such that
\begin{itemize}
\item all the elements of $A$ are rooted, i.e. have all their sections of the first level trivial, and $A$ acts transitively on the first level,
\item all the elements of $H$ have the form $h=\langle h|_1,\dots,h|_d\rangle \s$ with sections $h|_1 \in H$ and $h|_x\in A$ for all $x\in X\setminus\{1\}$ and permutation fixing $1$, i.e. $\s1=1$.
\item let $\u_H$ and $\u_A$ denote respectively the uniform measure on $H$ and $A$, we assume moreover that the first section map $h\mapsto h|_1$ pushes $\u_H$ forward to $\u_H$ itself, and that the other section maps $h\mapsto h|_x$ for $x \in X\setminus\{1\}$ push $\u_H$ forward to $\u_A$ (this together with transitivity of the $A$-action on $X$ implies in particular that the image of $G$ under the section map $g\mapsto g|_v$  is $G$ itself, and this for any vertex $v$).
\end{itemize}

Recall that the first Grigorchuk group is the group acting on a binary rooted tree generated by four elements recursively defined by $a=\langle e,e\rangle (12)$, $b=\langle c,a\rangle$, $c=\langle d,a\rangle$ and $d=\langle b,e\rangle$. It satisfies this set of hypotheses by putting $A=\{e,a\}$ and $H=\{e,b,c,d\}$. Other classical exemples covered by our hypotheses are the various Grigorchuk groups \cite{Grigorchuk85,Grigorchuk1986}, as well as generalisations in \cite{BartholdiSunik2001}, and mother automata groups of degree $0$ introduced in \cite{BKN}.

We now consider some specific permutational wreath products $\Lambda \wr_S G$ of a group $G=G(A,H)$ with a finite group $\Lambda$ over the set $S=G.1^\infty\subset \partial T$, which is the orbit of the boundary point $1^\infty$ under the action of $G$. This group is generated by $A \cup H\cup \Lambda$ where we identify $\Lambda$ with the subgroup $\{\l\delta_{1^\infty}: \l \in \Lambda\}$ where $\l\delta_{1^\infty}(x)$ takes value $\l$ at $x=1^\infty$ and $e$ elsewhere. Under identifications of the tree boundaries $\partial T=\sqcup_{x \in X} \partial xT$, the isomorphism (\ref{autT}) extends to the action on the boundary and we obtain an embedding
\[
\Lambda \wr_S G(A,H) \hookrightarrow \left(\Lambda \wr_S G(A,H) \right)\wr_X A.
\]
Notice that under this embedding $\l\delta_{1^\infty}=\langle \l\delta_{1^\infty},e,\dots,e\rangle$. Moreover it is immediately checked that $H$ and $\Lambda$ commute. For such groups, we have:

\begin{theorem}\label{thm:dirgps}
Let $G=G(A,H)$ be as above and let $\Lambda$ be a finite group. The random walk $(\L \wr_S G, \u_{\Lambda H}\ast \u_A)$  is partially entropy noise sensitive.
\end{theorem}

The measure $\u_{\Lambda H}\ast \u_A=\u_\L \ast \u_{HA}$ was called ``switch and walk'' in the statement of Theorem~\ref{main3}.
This theorem applies for instance to extensions of the Grigorchuk groups introduced by Bartholdi and Erschler, and which can have prescribed growth, see \cite{BartholdiErschler2012, Brieussel2014,BartholdiErschler2014}. For simplicity we have decided to restrict ourselves to the case of tree of constant degree, but this hypothesis is unnecessary and the arguments of the proof below apply to rooted trees with bounded degree. Such generalisations provide interesting groups  with prescribed entropy when $\Lambda$ is finite and prescribed speed when $\Lambda=\Z$, see \cite{Brieussel2013,AmirVirag2012}.

\begin{proof}  For these groups, Amir and Virag \cite[Inequality (9) and Corollary 24]{AmirVirag2012} have shown that the entropy of the random walk satisfies
\begin{equation}\label{AV4}
c \E\left|\O(X_n)\right| \le H(X_n) \le C \E\left|\O(X_n)\right|
\end{equation}
 where $\O(X_n)$ is the inverted orbit of $1^\infty$ and the constants $c,C$ depend only on the degree of the tree.

We will also use \cite[Proposition 3.8]{Brieussel2013} that the size of the inverted orbit is the sum of that of the sections  $|\O(X_n)|=\sum_{x=1}^{d}|\O(X_n|_x)|$. By induction, we deduce that for any level $\ell$
\begin{eqnarray}\label{invosum}
|\O(X_n)|=\sum_{v \in X^\ell}|\O(X_n|_v)|.
\end{eqnarray}

Let us first understand a section $X_n|_x$ of the first level. By \cite[Lemma 4.1]{Brieussel2013}, for each $1 \le x \le d$, the random word on the section at $x$ has the form $X_n|_x=b^x_1a^x_1b^x_2\dots b^x_sa^x_s$ where the factors $a^x_r$ are uniform in $A$ and the factors $b^x_r$ are uniform in $\L H$. These factors are all independent and the number $s$ follows a binomial law $\mathcal B(n,\frac{d-1}{d^2})$. In particular, the $\u_{\Lambda H}\ast \u_A$ random walk induces (slowed down) $\u_{\Lambda H}\ast \u_A$ random walks on the sections. The key point is that the refreshing parameter $\r$ increases under taking sections.

To see this, let us denote an increment of $X_n$ by $s_i=b_ia_i$ with $a_i$ uniform in $A$ and $b_i=\langle \l_i\d_{1^\infty}h_i,a_{i2},\dots ,a_{id_0}\rangle \pi_i$ uniform in $\L H$. Then each factor $b^x_r$ of the section $X_n|_x$ is a product of exactly $k$ factors of the form $\l_i\d_{1^\infty}h_i$ with probability $\left(\frac{1}{d}\right)^k\frac{d-1}{d}$. Similarly each factor $a_r^x$ is a product of exactly $k$ factors $a_{ij}$ with probability $\left(\frac{d-1}{d}\right)^k\frac{1}{d}$. (Note that this is not true for the last factor of $X_n|_x$ because of time truncation.) This is so because the independent factors $a_i$ resample for successive increments which (unique) section will receive the increment in $\Lambda H$ (the one at $1.X_{i-1}^{-1}$), while the $d-1$ other sections will receive an increment in $A$.  

It follows that a factor $b_r^x$ is refreshed with probability
\[
\r_1=\sum_{k=1}^\infty (1-(1-\r)^k)\left(\frac{1}{d}\right)^k\frac{d-1}{d}=1-\frac{1-\r}{1+\frac{\r}{d-1}}>\r,
\]
and a similar formula holds for the refreshing parameter of $a_r^x$. By induction, we deduce that for any initial $\r>0$, there exists a level $\ell$ at which the refreshing parameter is $\r_\ell\ge \frac{1}{2}$. 

We are now ready to consider the refreshed sample $Y_n^\r$. As in the proof of Lemma \ref{lemma:Hinv}, we consider the word $Y_n'^{\rm{word}}$ obtained by resampling the locations of the factors and the $\mu_G=\u_H\ast \u_A$ part of the refreshed samples (but not yet the $\u_\Lambda$ part). The conditioned entropy $\H(Y_n^\r|Y_n'^{\rm{word}})$ is bounded below by the number of sites in $S$ where the lamp will be refreshed. Using the partition~(\ref{invosum}), we first count the number of points of $\O(Y_n'^{\rm{word}}|_v) \subset \partial vT$ where the lamps are refreshed. By the discussion above and the proof of Lemma \ref{lemma:Hinv}, the expected number of refreshed sites in $S \cap \partial vT$ is at least $\r_\ell \E|\O(X_n|_v)|$.

Then:
\[
\H(Y_n^\r|X_n) \ge \H(Y_n^\r|Y_n'^{\rm{word}}) \ge \sum_{v \in X^\ell} \r_\ell \H(\u_\Lambda)\E|\O(X_n|_v)| \ge \frac{1}{2}\H(\u_\l) \E|\O(X_n)| \ge \frac{c}{2}\H(\u_\Lambda)\H(X_n)
\]
using successively the independence of lamp resampling at disjoint sites, (\ref{invosum}) and (\ref{AV4}).
\end{proof}

The choice of measure is heavily used in the proof to get similar random walks at the sections. 

\begin{remark}
It would be interesting to have a similar result for the groups $G(A,H)$ themselves rather than such permutational product extensions, for instance for the first Grigorchuk group. However it is difficult to describe precisely the effect of noise on the $\mu_{HA}$ part. Indeed, not only does this resample the increments in one section but it also shuffles the increments between different sections of a given level. Heuristically, this means that the effect of noise in the sections is even stronger than simply ``increasing the parameter'', as was used in the proof above. We should also mention that the precise asymptotic behaviour of the entropy of (even simple) random walks on the first Grigorchuk group is still unknown.
\end{remark}

\section{Perspectives and questions}\label{sec:qu}

As mentionned in the introduction, our observations on noise sensitivity lead us to believe that the only obstructions to noise sensitivity are virtual homomorphisms onto $\Z$ and non-Liouville property, whence Question  \ref{conj:ell1} and Conjectures \ref{conj:Grig} and \ref{conj:entropy}. We record here some questions and tasks for further study of noise sensitivity of groups.

{\bf 1.} Find more examples of $\ell^1$-noise sensitive groups.
\begin{itemize}
\item[$\bullet$] Clarify which virtually abelian (or more generally nilpotent) groups are $\ell^1$-noise sensitive, and for which measures.
\item[$\bullet$] Is the wreath product of a finite group with the dihedral group $\ell^1$-noise sensitive for some measure? If yes, it would provide an example with exponential growth.
\item[$\bullet$] Study noise sensitivity phenomena in other Liouville groups, such as degree $0$ automata groups \cite{AAMBV}, degree $1$ mother automata groups \cite{AAV} or simple groups \cite{MB,Nekrashevych}.
\item[$\bullet$] Find examples of \emph{strongly} $\ell^1$-noise sensitive groups, i.e. noise sensitive with respect to any (finitely supported generating) probability measure. Possibly, this would be the case for any torsion or simple Liouville group. We expect this property to hold for the first Grigorchuk group.
\end{itemize}

{\bf 2.} Noise sensitivity could also be studied \emph{quantitatively}.
\begin{itemize}
\item[$\bullet$] The choice of  refreshing parameter $\r$ to be constant is arbitrary and it is natural to consider a parameter $\r(n)$ depending on the length $n$. There should be a threshold over which the noised random walk resembles an independent sample. Our definitions of noise sensitivity simply require that this threshold is tending to $0$. 

A lower bound is given by entropy consideration: the entropy of the noise should be no less than that of the independent sample: $\r(n)n\H(\mu)\ge \H(\mu_n)$. However this is not enough in general (e.g. $\Z$ is not $\ell^1$-noise sensitive).

\item[$\bullet$] Does partial $\ell^1$-noise sensitivity ($\exists c >0, \forall \r\in(0,1), \liminf \|\pi_n^\r-\mu_n^2\|_1\ge c$) imply Liouville property?
\end{itemize}

{\bf 3.} About entropy noise sensitivity:
\begin{itemize}
\item[$\bullet$] it is likely that the proof of Theorem \ref{thm:lamplighter} could be improved to show that  when $G$ is entropy noise sensitive, then $G\wr \Z$ and $G \wr \Z^2$ are entropy noise sensitive as well. It would be a consequence of Conjecture \ref{conj:entropy}.
By Proposition~\ref{prop:enstoLiouville}, this is no longer true for wreath products with $\Z^d$ for $d\ge 3$ as they are non-Liouville. 
\item[$\bullet$] Is it true that partial entropy noise sensitivity is equivalent to entropy noise sensitivity?
\end{itemize}

{\bf 4.} About the relationship between notions of noise sensitivity:
\begin{itemize}
\item[$\bullet$] Is it true that $\ell^1$-noise sensitivity implies entropy noise sensitivity? 
\item[$\bullet$] Is it true that large scale noise sensitivity implies  noise sensitivity in average word distance?
\item[$\bullet$] Are there random walks on groups not satisfying the homogeneity assumptions of Definitions \ref{def:ell1toentropy} and \ref{def:ell1todist}?
\end{itemize}

{\bf 5.} About average distance noise sensitivity.
\begin{itemize}
\item[$\bullet$] Abelian groups are not (even partially) noise sensitive in average distance for any measure. Are there other groups with this property ?
\item[$\bullet$] Is it possible that $\liminf\frac{\E d_G(X_n,Y_n^\r)}{\E d_G(X_n,X'_n)} > 1$?
\end{itemize}

{\bf Acknowledgement.} We thank anonymous referees for useful comments. We also thank Gidi Amir and Ryokichi Tanaka for interesting discussions regarding noise sensitivity. We are especially grateful to Gidi Amir and an anonymous referee for pointing out an error in a previous version, and contribution to repair it.

\bibliographystyle{abbrv}
\bibliography{NS}

\end{document}